\documentclass[11pt,a4paper]{article}

\usepackage[T1]{fontenc}
\usepackage[utf8]{inputenc}
\usepackage[english]{babel}
\usepackage{amsmath,amssymb,amsthm,mathtools}
\usepackage[a4paper,margin=25mm]{geometry}
\usepackage{microtype}
\usepackage[hidelinks]{hyperref}
\usepackage{enumitem}
\usepackage{xcolor}

\renewcommand\le{\leqslant}
\renewcommand\ge{\geqslant}
\setlist{nosep}

\newtheorem{theorem}{Theorem}[section]

\newtheorem{exttheorem}{Theorem}

\newtheorem{proposition}[theorem]{Proposition}
\newtheorem{lemma}[theorem]{Lemma}
\newtheorem{corollary}[theorem]{Corollary}
\theoremstyle{remark}
\newtheorem{remark}[theorem]{Remark}

\newcommand{\C}{\mathbb C}
\newcommand{\R}{\mathbb R}
\newcommand{\Z}{\mathbb Z}
\newcommand{\T}{\mathbb T}
\newcommand{\E}{\mathbb E}
\newcommand{\PP}{\mathbb P}
\newcommand{\avg}{\mathrm{avg}}
\newcommand{\eps}{\varepsilon}
\newcommand{\Id}{\mathrm{Id}}
\newcommand{\1}{\mathbf 1}

\DeclareMathOperator{\rank}{rank}
\DeclareMathOperator{\diag}{diag}
\DeclareMathOperator{\dist}{dist}
\DeclareMathOperator{\sign}{sign}
\DeclareMathOperator{\Span}{span}

\title{Low-dimensional approximation of uniformly bounded orthonormal systems}
\author{Yuri Malykhin}
\date{}

\begin{document}
\maketitle

\begin{abstract}
We prove that if $f_1,\ldots,f_N$ is an uniformly bounded orthonormal system, then
for all $p\in[1,2)$ there is an estimate for its Kolmogorov widths:
$d_n(\{f_1,\dots,f_N\},L_p) \gtrsim \min\{1,n^{-1/p}N^{1/2}\}$, $n\le N/2$.
In the regime $n\asymp N$ this provides the lower bound $N^{-\alpha_p}$ with
a sharp exponent $\alpha_p:=1/p-1/2$. Besides that, it follows that a good
approximation of such ONS requires the dimension $n\gtrsim N^{p/2}$.

Our second result is an approximation theorem for Fourier
matrices of finite abelian groups (this includes the usual DFT matrices). For every $\eta>0$ there is
$a=a(\eta)>0$ such that any Fourier matrix $F$ of sufficiently large order has
an approximation of rank $N^{1-a}$ with the row-wise $\ell_1$-error
at most $N^{1/2+\eta}$; the exponent $1/2$ is sharp.  Consequences include the 
approximation of the same kind for circulant matrices; the
approximation of the trigonometric functions $\exp(2\pi i\langle
\lambda,x\rangle)$, $\lambda\in \Lambda=K\cap\mathbb{Z}^d$, $K$ is symmetric convex,
    with dimension $|\Lambda|^{1-a}$ and an optimal error
$|\Lambda|^{-\alpha_p+\eta}$; bounds for widths of weighted Wiener classes.
\end{abstract}

\section{Introduction and main results}

Let $f_1,\ldots,f_N$ be an orthonormal system of functions. We study the
$L_p$-approximation of these functions by linear spaces of some dimension $n$
that is essentially smaller than $N$. In terms of Kolmogorov widths the
approximation error equals
\begin{equation}
\label{widths_problem}
d_n(\{f_1,\ldots,f_N\}, L_p)
:= \inf_{\dim V\le n}\;\max_{1\le k\le N}\dist_{L_p}(f_k,V).
\end{equation}
This paper continues the line of
work~\cite{Malykhin2022,Malykhin2024,MalRyutin,AstMal};
now the research is empowered by AI.

The considered problem may seem rather specific~---
usually one is interested in approximating a wide class of functions using
elements of given ONS, not the approximation of the ONS elements themselves.
However, as we will see, this problem is indeed substantive, contains interesting examples and
has some applications. It is connected to classical notions in
Harmonic analysis, low-rank matrix approximation and matrix rigidity, and
widths of finite-dimensional bodies and functional compacts.

The starting point for the problem~\eqref{widths_problem} was the well-known
equality in the Euclidean space:
\begin{equation}
    \label{l2_rigidity}
    d_n(\{f_1,\ldots,f_N\},L_2) = \sqrt{1-n/N},
\end{equation}
which implies that the width is bounded away from zero if $n\le N(1-\eps)$; hence
the system is ``rigid'' (poorly approximated).
In~\cite{Malykhin2022} it was proven that the first elements of the Walsh
system are not rigid in $L_p$, $p<2$.
Various sufficient conditions for the rigidity were obtained
in~\cite{Malykhin2024}: for $1<p<2$ any $p'$-lacunary ONS is rigid in $L_p$, and
any ONS of independent functions is rigid in $L_1$.  The rigidity of independent
systems was further studied for
rearrangement-invariant spaces in~\cite{AstMal}. In
papers~\cite{MalRyutin,MalRyutin2} the lower bounds for
widths of finite systems were applied to widths of finite-dimensional sets.
Now let us proceed to the new results.
We will consider only uniformly bounded systems that are rigid
in $L_2$ and, of course, in $L_p$ for $p>2$. So we restrict to the
case $1\le p<2$.

By $d_n$ we denote the classical Kolmogorov width in real normed spaces; by
$d_n^{\mathbb{C}}$ we will denote its complex version.

\begin{theorem}[Lower bound for uniformly bounded O.N.S]
\label{thm_intro_lower}
Let $(\Omega,\mu)$ be a probability space, let $1\le p<2$, and let
$f_1,\dots,f_N$ be a real orthonormal system in $L_2(\Omega)$ such that
$\|f_k\|_\infty\le K$, $k=1,\dots,N$.
Then, for every $1\le n\le N/2$,
\begin{equation}\label{eq:intro-quantitative-lower}
 d_n(\{f_1,\dots,f_N\},L_p)
    \ge c(p,K)\min\left\{1,n^{-1/p}N^{1/2}\right\}.
\end{equation}
For complex orthonormal systems the same inequality holds for the width
    $d_n^{\mathbb{C}}$, $n\le N/4$.
\end{theorem}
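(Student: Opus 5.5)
The plan is to argue by contradiction, combining a Lewis-type change of density on the approximating subspace with a trace (volume-counting) argument. Suppose $V$ has $\dim V\le n$ and elements $g_k\in V$ with $\|f_k-g_k\|_p\le\eps$ for all $k$, where $\eps=c\min\{1,n^{-1/p}N^{1/2}\}$ with $c=c(p,K)$ small. Write $r_k=f_k-g_k$.

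First, apply Lewis' lemma to the $n$-dimensional subspace $V\subset L_p(\mu)$. It gives a probability density $w$ and a basis $\varphi_1,\dots,\varphi_n$ of $V$ with the following properties:
\begin{itemize}
\item $\varphi_j = w^{1/p}\psi_j$, where the $\psi_j$ are orthonormal in $L_2(w\,d\mu)$;
\item $\sum_j|\psi_j|^2=n$ a.e.
\end{itemize}
From these one reads off the two standard comparison inequalities for $h=w^{1/p}\psi\in V$:
\[
\|\psi\|_{L_\infty}\le \sqrt n\,\|\psi\|_{L_2(w)}
\quad\text{and}\quad
\|\psi\|_{L_2(w)}\le n^{1/p-1/2}\|h\|_{L_p(\mu)} .
\]
Let $Q$ denote the orthogonal projection onto $\Span\{\psi_j\}$ in $L_2(w)$, transported back to $V$. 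Its kernel is bounded by $n$, so $Q$ is bounded on $L_p$ with norm $\lesssim 1$; this is the usual Lewis projection estimate.

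Second, the trace argument. Consider the quantity
\[
S=\sum_{k}\langle f_k, f_k\rangle = N
\]
and split each $f_k$ as $g_k+r_k$. The part involving $g_k\in V$ is controlled by a rank bound. Since $g_k = Q g_k$, one has
\[
\Bigl|\sum_k\langle f_k,Qg_k\rangle\Bigr|\le\sum_k|\langle Q^*f_k,g_k\rangle| ,
\]
and the Bessel inequality for the orthonormal system $\{f_k\}$, applied to the rank-$n$ operator $Q^*$ with the kernel bound and $\|f_k\|_\infty\le K$, should give a bound of order $K\sqrt{nN}$ up to factors of $\|g_k\|_p\lesssim 1$. This is the step where uniform boundedness enters. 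The remainder part $\sum_k\langle f_k,r_k\rangle$ should not be estimated term by term, since that loses a factor $N$. Instead:
\begin{itemize}
\item introduce random signs $a_k$ and compare $\E\bigl\langle\sum_k a_kf_k,\sum_k a_kr_k\bigr\rangle$;
\item use $\|\sum_k a_kf_k\|_{p'}\lesssim \sqrt{p'}\,K\sqrt N$, which follows from Khintchine pointwise together with $\sum_k|f_k|^2\le K^2N$;
\item use $\E\|\sum_k a_kr_k\|_p\le(\sum_k\|r_k\|_p^p)^{1/p}\le N^{1/p}\eps$, by Khintchine and $p\le2$.
\end{itemize}
This yields a bound of order $\sqrt N\,N^{1/p}\eps$. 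The $p'$ in the Khintchine constant is harmless only for $p>1$; for $p=1$ one should run the duality against an $L_\infty$-normalized test function instead.

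Third, balance the two bounds. The contradiction should read
\[
N\lesssim \sqrt{nN}+\sqrt N\,N^{1/p}\eps,
\]
and the $n\le N/2$ hypothesis is what makes the first term beatable. In its naive form this inequality is not sharp enough to give the claimed $n^{-1/p}N^{1/2}$, so the decomposition must be refined: the random-sign function $F=\sum_k a_kf_k$ has $\|F\|_2=\sqrt N$, its approximant $G=\sum_k a_kg_k$ lies in $V$, and $\|F-G\|_p\lesssim N^{1/p}\eps$. The target is a lower bound $\dist_{L_p}(F,V)\gtrsim\sqrt N\,(N/n)^{1/p}\cdot N^{-1/2}$. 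To get it, I would use that $F$ is spread out: $\|F\|_p\asymp\sqrt N$ by Khintchine, while its $L_2(w)$-projection onto $V$ captures only an $n/N$ fraction of its energy on average over signs. That last claim is the trace statement $\E\|QF\|^2 = \sum_k\|Qf_k\|^2\le n\cdot\sup\text{kernel}\cdot K^2$.

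The main obstacle is this last step. One has to turn the $L_2(w)$ energy deficit $\E\|F-QF\|^2_{L_2(w)}\ge(1-n/N)\cdot\Theta(N)$ into an $L_p(\mu)$ lower bound with the right power $n^{-1/p}$. The $L_2\to L_p$ comparison only holds in one direction on $V$, and the weight $w$ distorts the orthonormality of the $f_k$. I would first try to handle this by interpolation: bound $\|F-G\|_{L_2(w)}$ by a combination of its $L_p$ norm and its $L_\infty$ norm, where the $L_\infty$ norm is at most $K\sqrt N$ from the $f$-part plus $\sqrt n$ times the $L_2(w)$ norm from the $V$-part.

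Finally, the complex case follows from the real one by viewing a complex subspace of dimension $n$ as a real one of dimension $2n$, and the real and imaginary parts of the $f_k$ as a uniformly bounded system. This accounts for the restriction $n\le N/4$.
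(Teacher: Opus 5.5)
Your proposal has a genuine gap, located in Step~2. The estimate $\bigl|\sum_k\langle f_k,g_k\rangle\bigr|\lesssim K\sqrt{nN}$, using only $g_k\in V$ and $\|g_k\|_p\lesssim1$, is false. Combine it with the trivial bound $\bigl|\sum_k\langle f_k,r_k\rangle\bigr|\le K\sum_k\|r_k\|_1\le NK\eps$. The result would be $d_n\ge c/K$ for all $n\le cN/K^2$, i.e.\ $L_p$-rigidity of \emph{every} uniformly bounded ONS. Theorem~\ref{thm:main-rank} contradicts this. Take the Walsh system ($G=\Z_2^d$) and replace $B_G$ by $\operatorname{Re}B_G$. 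Then $\dim V\le 2N^{1-a}$, $\|f_k-g_k\|_1\le N^{-1/2+\eta}$ and $\|g_k\|_p\le 1+o(1)$, yet $\sum_k\langle f_k,g_k\rangle\ge N(1-N^{-1/2+\eta})\gg\sqrt{nN}$.

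The underlying reason is that Bessel's inequality controls $\sum_k\langle f_k,g_k\rangle$ only through $\|g_k\|_2$, and $L_p$-closeness to $f_k$ says nothing about $\|g_k\|_2$. Lewis' lemma only gives $|g_k|\le(nw)^{1/p}\|g_k\|_p$ with $w$ unbounded. The Lewis projection is also not uniformly bounded on $L_p$: on $L_1$ its norm is of order $\sqrt n$ for the span of $n$ Rademacher functions. Finally, your random-sign bound for the remainder, $\sqrt{p'}\,KN^{1/2+1/p}\eps$, is \emph{worse} than the term-by-term bound $NK\eps$ you wanted to avoid, since $1/2+1/p>1$.

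Step~3 cannot repair this. Since $\dist_{L_p}(F,V)\le\|F\|_p\le\sqrt N$, any argument through $F=\sum_ka_kf_k$ and $\|F-G\|_p\lesssim N^{1/p}\eps$ gives at most $\eps\gtrsim N^{1/2-1/p}$. That matches the theorem only for $n\asymp N$; for $n\le N^{p/2}$ the theorem asserts $d_n\gtrsim1$. Your stated target $(N/n)^{1/p}$ would give only $\eps\gtrsim n^{-1/p}$. The ``main obstacle'' is also left open. For a fixed sign pattern $\|F\|_\infty$ can be of order $KN$, not $K\sqrt N$, and $G$ has no $L_\infty$ bound without truncation.

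The missing idea is a pointwise domination of \emph{all} $g_k$ by a few of them, which separates the bounded part from the errors. The paper takes a volumetric spanner $g_{j_1},\dots,g_{j_r}$, $r\le 2n-1$. This gives $|g_k|\le K\sqrt{2n}+R$ with $R=(\sum_s|f_{j_s}-g_{j_s}|^2)^{1/2}$ and $\|R\|_p^p\le 2n\eps^p$. On $E=\{R\le K\sqrt N\}$ one has $|f_k-g_k|\le 3K\sqrt N$, hence $\|(f_k-g_k)\1_E\|_2^2\le\eps^p(3K\sqrt N)^{2-p}$. Meanwhile $\mu(E^c)\le 2K^{-p}\tau$ with $\tau=n\eps^pN^{-p/2}$. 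Now set $I=\sum_k\int_Ef_kg_k\,d\mu$. Term by term, $I\ge N(1-K\eps-2K^{2-p}\tau)$. Bessel on $W=\Span\{g_k\1_E\}$ gives $I\le\sqrt{nN}+N(3K)^{1-p/2}\sqrt\tau$. Together these force $\eps\ge c$ or $\tau\ge c$. The factor $\eps^p$ in $\mu(E^c)$ is what produces $n^{-1/p}N^{1/2}$. A plain truncation of the Lewis density such as $\{nw\le\lambda\}$ has no such factor and by itself only handles $n\lesssim N^{p/2}$.

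Your complex reduction is fine provided you realify as $\sqrt2(\operatorname{Re}f_k,\operatorname{Im}f_k)$ on $\Omega\times\{1,2\}$. The $2N$ functions $\operatorname{Re}f_k,\operatorname{Im}f_k$ themselves need not be orthonormal.
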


In particular,
$$
d_{N/2}(\{f_1,\ldots,f_N\}, L_p)\gtrsim N^{-\alpha_p},
\quad\alpha_p:=\frac1p-\frac12.
$$
The exponent $\alpha_p$ is sharp, as we will see soon (we will keep this
notation in the rest of the paper).
The second corollary is that the approximation of ONS with a fixed sufficiently small error
requires the dimension $n\gtrsim N^{p/2}$; we do not know if the exponent $p/2$ is optimal here.

There is also a new sufficient condition for the rigidity in $L_1$.
Let $G$ be a compact abelian group with normalized Haar measure and let
$\Lambda=\{\gamma_1,\dots,\gamma_N\}\subset\widehat G$ be a finite set of
distinct characters.  We say that $\Lambda$ is a \emph{Sidon set with
constant $S$} if
\begin{equation}\label{eq:intro-sidon}
 \sum_{j=1}^N|a_j|
 \le S\left\|\sum_{j=1}^Na_j\gamma_j\right\|_{C(G)}
\end{equation}
for all complex coefficients $a_j$.

\begin{theorem}[Rigidity of Sidon characters]\label{thm:sidon-rigidity}
Suppose that $\gamma_1,\dots,\gamma_N$ belong to a Sidon set with constant
$S$.  Then for every $\delta\in(0,1)$ and every
$n\le(1-\delta)N$,
\[
 d_n^{\mathbb C}(\{\gamma_1,\dots,\gamma_N\},L_1(G))
 \ge \frac{c(\delta)}{S}.
\]
\end{theorem}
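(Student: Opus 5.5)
The plan is a linear-algebra/duality argument: turn an $L_1$-approximation of the characters into a low-rank perturbation of the identity matrix, then use the Sidon interpolation property to show that the perturbation is small in a norm that cannot be small for such a perturbation. Suppose $V\subset L_1(G)$ has $\dim_{\C}V\le n\le(1-\delta)N$. Suppose also that $\gamma_k=u_k+e_k$ with $u_k\in V$ and $\|e_k\|_1\le\eps$. We want to show $\eps\ge c(\delta)/S$.

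\textbf{Step 1 (matrix reformulation).} Pair with bounded test functions $h_j$ satisfying $\langle\gamma_k,h_j\rangle=\delta_{jk}$. This gives $I=A+E$ with $A_{jk}=\langle u_k,h_j\rangle$ and $E_{jk}=\langle e_k,h_j\rangle$. Since every $u_k$ lies in $V$, we have $\rank A\le n$.

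\textbf{Step 2 (Grothendieck lower bound).} Let $P$ be the orthogonal projection onto $\ker A$; it has rank at least $\delta N$, so $\operatorname{tr}P\ge\delta N$, and $E$ acts as the identity on $\ker A$. Then
\[
\delta N\le \operatorname{tr}(P)=\operatorname{tr}(EP).
\]
Write $P=\bigl(\langle v_k,v_j\rangle\bigr)$ with $\|v_j\|_2^2=P_{jj}\le1$. Grothendieck's inequality then gives
\[
|\operatorname{tr}(EP)|\le K_G\|E\|_{\ell_\infty\to\ell_1},
\]
and hence $\|E\|_{\ell_\infty\to\ell_1}\ge c\,\delta N$. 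So it suffices to prove $\|E\|_{\ell_\infty\to\ell_1}\le C S\eps N$. The norm here is the supremum of $\bigl|\sum_{j,k}y_jx_kE_{jk}\bigr|$ over unimodular vectors $x,y$.

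\textbf{Step 3 (using the Sidon property).} This is the main obstacle. With the naive choice $h_j=\gamma_j$ we get
\[
\sum_{j,k}y_jx_kE_{jk}=\sum_k x_k\Bigl\langle e_k,\sum_j\bar y_j\gamma_j\Bigr\rangle,
\]
and $\|\sum_j\bar y_j\gamma_j\|_\infty$ can be as large as $N$. This only gives the trivial bound $N^2\eps$.

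The Sidon inequality \eqref{eq:intro-sidon} is equivalent, by Hahn--Banach, to the interpolation property: for every unimodular $(y_j)$ there is a measure $\mu_y$ with $\|\mu_y\|\le S$ and $\hat\mu_y(\gamma_j)=y_j$. Convolution with $\mu_y$ is bounded on $L_1$ with norm at most $S$, and $\mu_y*\gamma_k=y_k\gamma_k$.

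I would first try to exploit translation invariance. Replacing $V$ by its translate $V_t$ conjugates $E$ by the diagonal unitary $\diag(\gamma_k(t))$. The idea is to replace $V$ by the (still $n$-dimensional) space $\mu_y*V$, which approximates the rotated system $\{y_k\gamma_k\}$ with errors $\|\mu_y*e_k\|_1\le S\eps$. Then, by averaging over $t\in G$ and over $y$, I would try to reduce the bilinear form to diagonal quantities of the form
\[
\int_G\sum_k x_k\,\overline{\gamma_k(t)}\,(\mu_y*e_k)(t)\,dt=\sum_k x_ky_k\,\hat e_k(\gamma_k).
\]
These are bounded by $S\eps N$.

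Concretely, I would choose the dual vectors in Step 2 adapted to $P$, so that only a diagonal-type trace has to be controlled. The identity $\delta N\le\operatorname{tr}(EP)$ would then be replaced by an estimate of
\[
\operatorname{Re}\sum_k\hat e_k(\gamma_k)
\]
after applying a suitable measure $\mu$ of norm at most $S$ that realises the phases coming from $P$.

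If this reduction to the diagonal fails, I would fall back on the weaker route through the $\Lambda(p)$ property of Sidon sets, $\|g\|_4\le CS\|g\|_2$ on $\Span\Lambda$, combined with Hölder. That route gives the bound with $S^{-2}$ in place of $S^{-1}$.
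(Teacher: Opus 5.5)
Your Steps 1 and 2 are correct: $EP=P$, and complex Grothendieck gives $\|E\|_{\ell_\infty\to\ell_1}\ge\delta N/K_G$. The gap is in Step 3. Its target, $\|E\|_{\ell_\infty\to\ell_1}\le CS\eps N$ for the (arbitrary) decomposition at hand, is false, so the proof cannot be completed along this line. The reason is that $E$ can carry a cheap rank-one component. Choose a neighbourhood $U$ of $0$ on which $|\gamma_j-1|\le1/2$ for all $j$, and put $\phi:=\1_U/m_G(U)$, so that $\operatorname{Re}\langle\phi,\gamma_j\rangle\ge1/2$. Replace $V$ by $V+\Span\{\phi\}$ and $u_k$ by $u_k-\eps\phi$. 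This raises the dimension by one and the error to at most $2\eps$. It also adds to $E$ the matrix $\eps\bigl(\langle\phi,\gamma_j\rangle\bigr)_{j,k}$, whose $\ell_\infty\to\ell_1$ norm is at least $\eps N^2/2$. Hence one of the two decompositions has $\|E\|_{\ell_\infty\to\ell_1}\ge\eps N^2/4$, which exceeds $2CS\eps N$ once $N>8CS$. With other bounded biorthogonal $h_j$, the added matrix still has norm $\gtrsim\eps N\|\sum_jh_j\|_\infty\ge\eps N^{3/2}$.

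This failure occurs already for $S=1$ (the coordinate characters $z_k$ on $\T^N$), so no use of the Sidon property can rescue this inequality. Passing from $\operatorname{tr}(EP)$ to $\|E\|_{\ell_\infty\to\ell_1}$ forgets that $P$ is tied to $\ker A$. Your proposed reduction to the diagonal does not repair this. Translating $V$ conjugates $E$ and $P$ by the same unitary $\diag(\gamma_k(t))$, so $\operatorname{tr}(EP)$ is translation invariant and never becomes $\sum_k\hat e_k(\gamma_k)$. Averaging $E$ alone against a frozen $P$ instead destroys the identity $EP=P$. Note also that Steps 1--2 and the bound $|\hat e_k(\gamma_k)|\le\eps$ hold for arbitrary characters, whereas the first $N$ exponentials are not rigid in $L_1$ (Theorem~\ref{thm:main-rank}). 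So all the content would have to sit in the step you leave unspecified. The $\Lambda(4)$ fallback gives at best $S^{-2}$, which is weaker than the statement.

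What is missing is the ingredient that actually produces the lower bound: rigidity of random Steinhaus vectors in $\ell_1^N$. It must be combined with the transposed viewpoint, in which your (correct) observation $\mu_y*\gamma_k=y_k\gamma_k$ becomes effective. Write $\Gamma(x)=(\gamma_k(x))_k$ and $U(x)=(u_k(x))_k$. All $U(x)$ lie in one fixed subspace $V_0\subset\C^N$ with $\dim V_0\le n$, and $\int_G\|\Gamma(x)-U(x)\|_{\ell_1^N}\,dm_G(x)\le N\eps$. Since $\int_G U(x-h)\,d\mu_y(h)\in V_0$, convolution shows that $\Gamma(x)\circ y$ is approximated from the \emph{same} subspace $V_0$, with $\int_G\dist_{\ell_1^N}(\Gamma(x)\circ y,V_0)\,dm_G(x)\le SN\eps$. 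The paper obtains this via Jensen's inequality for the convex, phase-invariant function $\dist_{\ell_1^N}(\cdot,V_0)$ and the measures $\nu_b$. Now average over a Steinhaus vector $y=z$; for each fixed $x$, $\Gamma(x)\circ z$ is Steinhaus-distributed, so $\E\dist_{\ell_1^N}(z,V_0)\le SN\eps$. Lemma~\ref{lem_steinhaus} then finishes: it gives $\E\dist_{\ell_1^N}(z,V_0)\ge c(\delta)N$, and is proved via Rademacher rigidity. That lemma is the average-width form of the $S=1$ case of the theorem, and the example above shows it is exactly what your Grothendieck step cannot deliver.
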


\begin{corollary}[Hadamard-lacunary systems]\label{cor:lacunary-sidon}
For every $q>1$ and $\delta\in(0,1)$ there is $c(q,\delta)>0$ such that, if
$1\le k_1<k_2<\cdots$ and $k_{j+1}/k_j\ge q$, then for every
$n\le(1-\delta)N$,
\[
 d_n^{\mathbb C}
 \bigl(\{e^{2\pi i k_jx}\}_{j=1}^N,L_1(\T)\bigr)
 \ge c(q,\delta).
\]
\end{corollary}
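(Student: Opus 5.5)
The corollary follows from Theorem~\ref{thm:sidon-rigidity} once we know that a Hadamard-lacunary set is Sidon with a constant depending only on $q$. The subtlety is that the classical Sidon theorem gives a constant depending only on $q$ when $q>3$, but for $1<q\le 3$ the standard argument first splits the sequence. The plan is therefore to reduce to the case of large ratio by passing to a subsequence, and then use the width inequality on that subsequence.

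\textbf{Step 1 (splitting).} Choose an integer $m=m(q)$ with $q^m\ge 4$. Partition $\{1,\dots,N\}$ into residue classes modulo $m$. Each class $J_r=\{j\equiv r \bmod m\}$ gives a subsequence $(k_j)_{j\in J_r}$ with consecutive ratios at least $q^m\ge 4>3$. By the classical Riesz-product argument, such a set is Sidon with an absolute constant $S_0$.

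To see this, for coefficients $a_j$ one forms the Riesz product
\[
\prod_{j\in J_r}\bigl(1+\operatorname{Re}(\bar\epsilon_j e^{2\pi i k_j x})\bigr),\qquad \epsilon_j=a_j/|a_j|.
\]
Because the ratios exceed $3$, the frequencies $\pm k_j$ appear in its expansion only from the linear terms. Pairing this product against $\sum a_j e^{2\pi i k_j x}$ yields $\sum|a_j|\le 2\|\sum a_j e^{2\pi i k_jx}\|_\infty$. This is standard, and I would simply quote it.

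\textbf{Step 2 (restriction of widths).} Widths are monotone under passing to a subset, since the maximum runs over fewer functions:
\[
d_n(\{\gamma_j\}_{j=1}^N)\ge d_n(\{\gamma_j\}_{j\in J_r}).
\]
Pick the largest class, so that $|J_r|\ge N/m$. The issue is that $n\le(1-\delta)N$ need not satisfy $n\le(1-\delta')|J_r|$, because $|J_r|$ may be much smaller than $n$. This is the main obstacle: plain restriction loses the proportion $n/N$.

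\textbf{Step 3 (handling the proportion).} Instead of restricting naively, use a pigeonhole on the approximating space. Let $V$ with $\dim V\le n$ approximate all $\gamma_j$ within $\varepsilon$ in $L_1$. Then $\sum_r \dim(\text{"share" of } V \text{ used by } J_r)$ is not well defined, so I would change approach and prove the Sidon property for the whole sequence with constant $S(q)$, which is also classical: a Hadamard-lacunary set with any $q>1$ is a finite union of $m(q)$ sets with ratio greater than $3$. A finite union of Sidon sets is Sidon (Drury's union theorem), and in this dissociate, explicit situation the constant is controlled by $m(q)$ and $S_0$.

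Alternatively, to avoid invoking Drury, one can use a cruder direct bound. For the union of $m$ dissociate pieces one can average Riesz products over translations, or use the fact that the $\Lambda(2)$/Sidon constant of a finite union of lacunary sets with fixed $m$ is bounded by $C^m$ via the quasi-independence argument of Pisier. Either way, $\{k_j\}$ is Sidon with $S=S(q)$, and Theorem~\ref{thm:sidon-rigidity} applied with $\gamma_j=e^{2\pi i k_j x}$ on $G=\T$ gives $d_n^{\C}\ge c(\delta)/S(q)=:c(q,\delta)$ for all $n\le(1-\delta)N$.

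I expect the only real work to be citing the Sidon constant of a $q$-lacunary set as a function of $q$ alone. If only the $q>3$ case is available, I would fall back on quoting the union theorem for the $m(q)$ pieces.
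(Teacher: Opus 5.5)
Your final argument is the paper's: show that $\{k_j\}$ is Sidon with a constant $S(q)$ depending only on $q$ (the paper simply cites the classical theorem in Rudin's book), then apply Theorem~\ref{thm:sidon-rigidity} on $G=\T$; the restriction idea in Steps~2--3 is rightly abandoned and is not needed. Drury's union theorem is heavier than required here: if you choose $m$ with $q^m>(2q-1)/(q-1)$ and $q^m\ge 3$, the Riesz product built on one residue class of indices has no frequency equal to any $k_i$ outside that class, so pairing gives $\sum_{j\in J_r}|a_j|\le 2\|\sum_j a_je^{2\pi i k_jx}\|_\infty$ for each class and hence $S(q)\le 2m$ directly.
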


It is still unknown how far can we go from independence keeping $L_1$-rigidity.
Is the Rademacher chaos rigid in $L_1$?

\paragraph{Fourier matrices.}
We now turn to upper bounds. For an $N\times N$ matrix $A$ we consider the
Frobenius norm $\|A\|_F :=
(\sum_{i,j}|A_{i,j}|^2)^{1/2}$ and
the row-wise norms
$$
\|A\|_{p,\infty} := \max_{1\le i\le N} \left(\sum_{j=1}^N |A_{i,j}|^p\right)^{1/p}.
$$
The row-wise low-rank approximation is exactly a Kolmogorov-width problem.
See~\cite{KMR} for more examples in the max-norm.
It is different from (but related to) classical entrywise matrix rigidity, originating in
\cite{Valiant1977}; in particular, the non-rigidity theorems of Alman and
Williams~\cite{AlmanWilliams} (for Walsh--Hadamard matrices) and of Dvir and Liu
\cite{DvirLiu2020} (for Fourier and circulant matrices) concern the classical
rigidity metric.

The well--known Eckart--Young theorem states that
$$
\min_{\rank B\le n}\|A-B\|_F = (\sum_{k>n}\sigma_k^2(A))^{1/2}.
$$
Therefore for any unitary matrix $U$ and any matrix $B$ with rank $n$ we have
\begin{equation}
    \label{unitary}
\|U-B\|_{2,\infty} \ge N^{-1/2}\|U-B\|_F \ge \sqrt{1-n/N}.
\end{equation}
This is, of course, the same lower bound as in~\eqref{l2_rigidity}.

%For $v\in\C^N$ put
%\[
% \norm{v}_{L_p^N}
% :=\left(\frac1N\sum_{j=1}^N|v_j|^p\right)^{1/p},
%\]
%and for an $N\times N$ matrix $A$ set
%\[
% \Delta_p(A):=
% \max_{1\le i\le N}
% \left(\frac1N\sum_{j=1}^N|A_{ij}|^p\right)^{1/p},
% \qquad
% \Delta_\infty(A):=\max_{i,j}|A_{ij}|.
%\]

Let $G$ be a finite abelian group.  Fix a non-degenerate symmetric pairing
\[
 G\times G\longrightarrow\R/\Z,
 \qquad (x,y)\longmapsto x\cdot y,
\]
and use it to identify $G$ with its dual group.  The corresponding Fourier
matrix is
\[
    \mathcal{F}_G:=\bigl(e^{2\pi i x\cdot y}\bigr)_{x,y\in G}.
\]
Changing the self-duality only permutes rows and columns.  For
the cyclic groups $G=\Z_N$ this is the usual DFT matrix (note a slight abuse
of the notation here)
\[
    \mathcal{F}_N:=\mathcal{F}_{\Z_N}=\bigl(e^{2\pi ixy/N}\bigr)_{x,y=0}^{N-1}.
\]

Another important example is the dyadic group $G=\mathbb{Z}_2^d$. The
corresponding Fourier matrix 
is the Walsh--Hadamard matrix with entries
$(-1)^{\langle x,y\rangle}$, $x,y\in\mathbb{Z}_2^d$.

\begin{theorem}[Approximation of Fourier matrices]
\label{thm:main-rank}
Let $\eta>0$.  There are constants
$a=a(\eta)>0$, $N_0=N_0(\eta)$,
such that for every finite abelian group $G$ of order $N\ge N_0$ there is a
matrix $B_G$ of order $N$ satisfying
$$
 \rank B_G\le N^{1-a}
    \quad\mbox{and}\quad
    \|\mathcal{F}_G-B_G\|_{1,\infty}\le N^{1/2+\eta}.
$$
\end{theorem}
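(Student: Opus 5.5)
We need a rank $N^{1-a}$ matrix $B$ such that each row of $\mathcal F_G-B$ has $\ell_1$-norm at most $N^{1/2+\eta}$. Equivalently, for every character $\chi_y(x)=e^{2\pi i x\cdot y}$ we want a vector $b_y$ in a fixed subspace $V\subset\C^G$ of dimension $N^{1-a}$ with $\|\chi_y-b_y\|_{\ell_1(G)}\le N^{1/2+\eta}$. The plan is to use the structure theorem: $G\cong\prod_j \Z_{m_j}$. We then reduce, by tensorization, to two building blocks. The first is a cyclic group of moderate order. The second is a product of many small cyclic factors.

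\textbf{Step 1: tensorization.} If $G=G_1\times G_2$, then $\mathcal F_G=\mathcal F_{G_1}\otimes\mathcal F_{G_2}$. Take approximants $B_i$ with ranks $r_i$ and row errors $E_i$ in $\|\cdot\|_{1,\infty}$. Write $\mathcal F_{G_1}\otimes\mathcal F_{G_2}-B_1\otimes B_2=(\mathcal F_{G_1}-B_1)\otimes\mathcal F_{G_2}+B_1\otimes(\mathcal F_{G_2}-B_2)$. This gives rank $r_1r_2$. The row error is at most $E_1|G_2|+(|G_1|+E_1)E_2$, since $\|B_1\|_{1,\infty}\le |G_1|+E_1$. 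This bound is too crude. It is fine, however, when we use $B_1\otimes\mathcal F_{G_2}$ with $G_2$ small, or when the errors are multiplicative. So I would instead aim for an entrywise-type statement on blocks: errors of the form $\|\mathcal F_{G_i}-B_i\|_{1,\infty}\le |G_i|^{1/2+\eta}$ multiply under tensor products as long as $\|B_i\|_{1,\infty}\le |G_i|^{1/2+\eta}$ as well. That would require $B_i$ to itself have small $\ell_1$ rows, which is false. Hence the better route is to approximate directly using a \emph{random-restriction / polynomial} method in the style of Alman--Williams and Dvir--Liu, applied group-wise. Concretely, for $\Z_2^d$, write the row error via the decomposition $\chi_y=\chi_y\1_{S}+\chi_y\1_{G\setminus S}$. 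Here the "good" region is where the character can be replaced by a low-degree polynomial in coordinates of $x$ once $y$ is restricted to Hamming weights near $d/2$.

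\textbf{Step 2: the core construction (I expect this to be the main obstacle).} For $G=\Z_2^d$, the first thing I would try is the following. Split rows $y$ by Hamming weight classes and columns $x$ likewise. For $|x|,|y|$ both far from the typical value $d/2$, the pair occupies a set of relative size $e^{-cd\eps^2}$, so we can afford to zero out those entries. The $\ell_1$ loss per row is then $N e^{-c\eps^2 d}$, which is too big unless it is also $\le N^{1/2+\eta}$. This suggests a different idea: use the $\ell_1\to\ell_2$ slack. An $\ell_2$-row error $\rho$ gives $\ell_1$ error $\le \sqrt N\rho$. So it suffices to find rank $N^{1-a}$ with row $\ell_2$-error $N^{\eta}$, i.e. relative Frobenius error $N^{\eta}$ — trivial, with $B=0$ and error $\sqrt N$. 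The real task is to beat the trivial $\ell_1$ error $N$ only by approaching $N^{1/2}$. So we need: most of each row is captured exactly (error $0$) by a low-rank part, and the remainder has $\ell_2$ norm $N^{\eta}$ times $\sqrt{\text{support}}$. I would therefore aim for $B$ that agrees with $\mathcal F$ outside a set $T$ of $N^{1/2+\eta}$... Since $\ell_1\le |T_y|\cdot 2$, this is exactly matrix non-rigidity at sparsity $N^{1/2+\eta}$ per row with rank $N^{1-a}$. That is stronger than Dvir--Liu's $N^{\eps}$ sparsity regime, but our sparsity budget $N^{1/2+\eta}$ is much larger. So Step 2 becomes: \emph{show $\mathcal F_G$ can be modified in $N^{1/2+\eta}$ entries per row to have rank $N^{1-a}$}. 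I would prove this by adapting Dvir--Liu's argument for Fourier matrices of abelian groups (their proof reduces cyclic groups to products via a Kronecker decomposition of $\Z_N$ after embedding into a larger smooth-order cyclic group, and handles $\Z_p^k$ via polynomial rank bounds). Their bounds give sparsity $N^{\eps}$ with rank $N^{1-a(\eps)}$, which is a fortiori $\le N^{1/2+\eta}$. I would invoke their theorem, checking that it holds uniformly for all finite abelian groups, which is their stated result.

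\textbf{Step 3: conclusion.} Given $\mathcal F_G=L+S$ with $\rank L\le N^{1-a}$ and each row of $S$ having at most $s$ nonzeros, we need control of $\|S\|_{1,\infty}$. We cannot simply bound entries of $S$ by $2$, since $S=\mathcal F-L$ may have large entries. So instead take $B=L$ but note $\|S\|_{1,\infty}\le\sqrt s\,\|S\|_{2,\infty}$. Moreover $\|S\|_{2,\infty}$ can be bounded after replacing $L$ by $L$ projected onto the orthogonal complement-adjusted least-squares fit of each row (row-wise $\ell_2$ projection onto the row space of $L$ only decreases errors). This gives $\|S\|_{2,\infty}\le\|\mathcal F\|_{2,\infty}=\sqrt N$ and hence $\ell_1$ error $\le \sqrt{sN}=N^{1/2+\eps/2}$, choosing $\eps=2\eta$. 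There is a catch: the projection changes supports, so the $\ell_1$ error must be taken via Cauchy--Schwarz on the original support only. To handle this, I would bound the $\ell_1$ error of the projected row by the original sparse error plus a term controlled using the projection's $\ell_2$ contraction. Making this precise is the delicate part after Step 2.
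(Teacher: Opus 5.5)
There is a genuine gap, and it is exactly the point you left open: nothing in your plan controls the \emph{size} of the sparse correction, and that control is the real content of the theorem. The intermediate goal of Step~2 is in fact impossible. That goal was to change at most $N^{1/2+\eta}$ entries per row, with the changed entries of size $O(1)$, so that the row $\ell_1$-error is at most $2|T_y|$. By Eckart--Young, as in \eqref{unitary}, $\|\mathcal F_G-B\|_F^2\ge N(N-n)$ whenever $\rank B\le n$. So some row $r$ of $\mathcal F_G-B$ has $\|r\|_2^2\ge N-n$. If also $\|r\|_1\le N^{1/2+\eta}$, then $\|r\|_\infty\ge\|r\|_2^2/\|r\|_1\ge (N-n)N^{-1/2-\eta}$. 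Thus every admissible error matrix has spikes of height about $\sqrt N$. What you actually need is a decomposition $\mathcal F_G=L+S$ with $\rank L\le N^{1-a}$ and $\sum_j|S_{y,j}|\le N^{1/2+\eta}$ for every row $y$. The Dvir--Liu theorem is a purely Hamming statement: it bounds the number of nonzeros of $S$ per row and column but says nothing about their size, and their construction is not designed to keep the corrected entries small. Quoting it therefore gives no bound on $\|\mathcal F_G-L\|_{1,\infty}$.

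Step~3 does not repair this. Before projection, $\|S\|_{2,\infty}$ is uncontrolled. After replacing each row by the $\ell_2$-projection of $(\mathcal F_G)_{y,*}$ onto the row space of $L$, the $\ell_2$-error is at most $\sqrt N$. However, the new residual is no longer supported on the $s$ positions $T_y$, so $\|\cdot\|_{1,\infty}\le\sqrt s\,\|\cdot\|_{2,\infty}$ does not apply, and Cauchy--Schwarz only gives the trivial bound $\sqrt N\cdot\sqrt N=N$. By the spike observation, an $\ell_2$-projection, which spreads the residual out, is the wrong tool here.

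The missing idea is to build the decomposition with $\Delta_1$-bookkeeping from the start. The paper's local input is $\mathcal F_G^2=NR_G$. Taking $E_G=a\Id+bR_G$ with $|a|=|b|=\sqrt{N/2}$ gives $\rank(\mathcal F_G-E_G)\le N/2$ and exactly two spikes of height $\sqrt{N/2}$ per row, which is the shape forced above. You discarded tensorization in Step~1 because you used a two-term telescoping. The paper instead expands $\bigotimes_i(A_i+E_i)=\sum_S T_S$ over many factors. In each term the $\Delta_1$-norms simply multiply, with $\Delta_1(A_i)\le 1+\delta_i$. The paper then either keeps the terms with few $E$-factors, or uses a layer-interpolating polynomial whose coefficients are at most $(2ed/k)^k$. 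Arbitrary cyclic orders are reached through primorial blocks, a bounded low-rank Hadamard phase correction from $\Z_D^m$ to $\Z_{D^m}$, and Lagrange interpolation from $\mathcal F_M$ down to $\mathcal F_N$, which costs only a factor $2^sM/N$ in $\Delta_1$. General $G$ is then handled by the power-gain tensor lemma.
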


The matrix $B_G$ provides a nontrivial approximation for every fixed $p<2$:
$\|\mathcal{F}_G-B_G\|_{p,\infty} \le \|\mathcal{F}_G-B_G\|_{1,\infty} \le N^{1/2+\eta}$. Note
that the exponent $1/2$ is sharp. Indeed,
since $N^{-1/2}\mathcal{F}_G$ is unitary, we have for any $n$-rank matrix $B$:
$$
\|\mathcal{F}_G-B\|_{2,\infty} \ge (N-n)^{1/2}.
$$

Let us state Theorem~\ref{thm:main-rank} in terms of widths. We identify rows of
$\mathcal{F}_G$ with characters $\chi_x\colon y\mapsto e^{2\pi ix\cdot y}$, $\chi_x\in
\widehat{G}$. Therefore,
$$
\min_{\rank B\le n}\|\mathcal{F}_G-B\|_{p,\infty} = N^{1/p}d_n^{\mathbb{C}}(\widehat{G},L_p(G)).
$$
We obtain the following result with a sharp exponent.
\begin{corollary}
    Let $1\le p<2$, $\eta>0$. For any finite abelian group $G$ of order $N\ge
    N_0(\eta)$ we have
$$
d^{\mathbb C}_{N^{1-a}}(\widehat{G},L_p(G)) \le N^{-\alpha_p+\eta},
    \quad a=a(\eta).
$$
\end{corollary}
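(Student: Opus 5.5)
This corollary follows directly from Theorem~\ref{thm:main-rank} and the identity relating row-wise matrix approximation to Kolmogorov widths stated just before it. I will take the matrix $B_G$ from Theorem~\ref{thm:main-rank}, with $a=a(\eta)$, $N_0=N_0(\eta)$ and $\rank B_G\le N^{1-a}$. Its rows give a complex linear space $V\subset L_p(G)$ with $\dim V\le N^{1-a}$: the row space of $B_G$, viewed as functions on $G$. The $x$-th row of $B_G$ lies in $V$ and approximates the character $\chi_x$, which is the $x$-th row of $\mathcal F_G$.

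The main step is comparing norms under the normalized Haar measure on $G$, which is uniform with mass $1/N$ per point. For a vector $v\in\C^N$,
\[
\|v\|_{L_p(G)}=\Bigl(\frac1N\sum_y|v_y|^p\Bigr)^{1/p}=N^{-1/p}\|v\|_{\ell_p^N}.
\]
This is exactly the factor $N^{1/p}$ in the displayed identity. Applying it row by row gives
\[
\dist_{L_p}(\chi_x,V)\le N^{-1/p}\|\mathcal F_G-B_G\|_{p,\infty}.
\]

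Next I bound the matrix norm. Since $\ell_p$ norms decrease in $p$, we have $\|\cdot\|_{p,\infty}\le\|\cdot\|_{1,\infty}$, so
\[
\|\mathcal F_G-B_G\|_{p,\infty}\le N^{1/2+\eta}.
\]

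Combining the two estimates gives
\[
d^{\C}_{N^{1-a}}(\widehat G,L_p(G))\le N^{-1/p+1/2+\eta}=N^{-\alpha_p+\eta}.
\]
If $N^{1-a}$ is not an integer, I read the width index as its integer part. This is harmless because $\rank B_G\le N^{1-a}$ already forces $\rank B_G\le\lfloor N^{1-a}\rfloor$.

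There is no real obstacle here; the only care needed is getting the normalization between $\ell_p^N$ and $L_p(G)$ right. The real difficulty lies entirely inside Theorem~\ref{thm:main-rank}, which we are allowed to assume.
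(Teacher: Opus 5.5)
Your proposal is correct and follows the paper's own argument. The paper also takes $B_G$ from Theorem~\ref{thm:main-rank} and uses $\min_{\rank B\le n}\|\mathcal F_G-B\|_{p,\infty}=N^{1/p}d_n^{\mathbb C}(\widehat G,L_p(G))$ together with $\|\mathcal F_G-B_G\|_{p,\infty}\le\|\mathcal F_G-B_G\|_{1,\infty}\le N^{1/2+\eta}$; your only addition is checking the needed inequality directly, via the row space of $B_G$ and the normalization $\|v\|_{L_p(G)}=N^{-1/p}\|v\|_{\ell_p^N}$.
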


\paragraph{Consequences.}
We record several consequences of Theorem~\ref{thm:main-rank} which motivate the
approximation problem from different directions.

First, the one-dimensional statement extends to convex frequency sets in
fixed dimension.  For a finite set $\Lambda\subset\Z^d$ write
\[
 \mathcal E_\Lambda
 :=\{e_\lambda:\lambda\in\Lambda\},
 \qquad
 e_\lambda(x):=e^{2\pi i\langle\lambda,x\rangle},
 \quad x\in\T^d.
\]

\begin{proposition}[Centrally symmetric convex spectra]
\label{prop:intro-convex-spectrum}
Let $1\le p<2$, let $\eta>0$, and let $d\ge1$ be fixed.  There are
constants $a=a(\eta)>0$ and $N_0=N_0(d,\eta)$ such that the following
holds.  If $K\subset\R^d$ is an origin-symmetric convex body,
\[
 \Lambda:=K\cap\Z^d,
 \qquad
 N:=|\Lambda|\ge N_0,
\]
then
\[
 d_{N^{1-a}}^{\mathbb C}
 \bigl(\mathcal E_\Lambda,L_p(\T^d)\bigr)
 \le N^{-\alpha_p+\eta}.
\]
\end{proposition}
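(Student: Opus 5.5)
Our plan is to transfer the finite-group result (Theorem~\ref{thm:main-rank}) to the torus by embedding $\Lambda$ into a finite abelian group $G=\Z_{M_1}\times\dots\times\Z_{M_d}$ whose order is comparable to $N$, and then passing from discrete $\ell_p$ approximation on $G$ to $L_p(\T^d)$ approximation by a standard sampling and interpolation argument.

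\textbf{Step 1 (normalization of $K$).} By John's theorem together with a lattice basis reduction (for example the Minkowski/Mahler reduction, or the flatness-type argument), we choose a unimodular map $U\in GL_d(\Z)$ such that $UK$ is comparable to a box $\prod_j[-R_j,R_j]$ up to factors depending only on $d$. Since $U$ acts on $\T^d$ by a measure-preserving automorphism, the widths of $\mathcal E_\Lambda$ are unchanged, so we may assume $K$ is sandwiched between two such boxes. We split into cases. If some $R_j<1$, then $\Lambda$ lies in a lower-dimensional sublattice, and $\mathcal E_\Lambda$ is then essentially a system in fewer variables (the remaining coordinates can be integrated out), so we induct on $d$. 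Otherwise $N\asymp_d\prod_j R_j$.

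\textbf{Step 2 (embedding into a group).} We take $M_j:=2\lceil C_dR_j\rceil+1$ with $C_d$ large enough that $\Lambda$ injects into $G=\prod_j\Z_{M_j}$ under reduction mod $M$. Then $|G|\le C'_dN$. We apply Theorem~\ref{thm:main-rank} with $\eta/2$, which gives a subspace $V_G$ of functions on $G$ of dimension $|G|^{1-a}\le N^{1-a'}$ approximating each character $\chi_\lambda$ in normalized $\ell_1(G)$, hence in normalized $\ell_p(G)$, to error $|G|^{-\alpha_p+\eta/2}$. This uses $\|v\|_{\ell_p}\le\|v\|_{\ell_1}$ for the $N^{1/p}$ normalization, as in the Corollary above.

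\textbf{Step 3 (back to $\T^d$).} This is the main obstacle. The naive approach interpolates from the grid $\frac1M\Z^d$ by a de la Vallée Poussin kernel $\mathcal V$ with spectrum in a dilate $2\Lambda$-box. The map $g\mapsto\sum_{y}g(y)\mathcal V(x-y/M)$ sends $\chi_\lambda|_{\text{grid}}$ to $e_\lambda$ exactly, provided the grid is about twice as fine (take $M_j\approx 4C_dR_j$, which still gives $|G|\asymp N$). By Marcinkiewicz--Zygmund type inequalities it is bounded from normalized $\ell_p(G)$ to $L_p(\T^d)$, with a constant depending only on $d$, uniformly for $1\le p\le\infty$; this holds because $\mathcal V$ has $L_1$ norm $O_d(1)$ and its discrete-shift family is uniformly summable. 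The image of $V_G$ has dimension at most $\dim V_G$, so we get $d^{\C}_{N^{1-a'}}\le C_dN^{-\alpha_p+\eta/2}\le N^{-\alpha_p+\eta}$ for $N\ge N_0(d,\eta)$. The point to verify carefully is the $p=1$ boundedness of the interpolation operator on anisotropic grids. We would prove it directly by estimating $\sup_x\frac1{|G|}\sum_y|\mathcal V(x-y/M)|$ through the product structure of a tensor de la Vallée Poussin kernel, which keeps the constant dependent only on $d$.
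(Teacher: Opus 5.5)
Your proof is correct, but it takes a different route from the paper at both substantive steps.

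\textbf{Geometric reduction.} The paper invokes the discrete John theorem of Tao--Vu to place $\Lambda$ inside an infinitely proper symmetric GAP $P$ of rank $r\le d$ with $|P|\le C(d)|\Lambda|$. It then pulls back from $\T^r$ along the surjective homomorphism $x\mapsto(\langle v_j,x\rangle)_{j\le r}$. You instead straighten $K$ by a unimodular change of variables $U$.

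A clean justification of your Step~1 is Minkowski's second theorem together with Mahler's theorem. Mahler gives a basis $b_1,\dots,b_d$ of $\Z^d$ with $\|b_i\|_K\lesssim_d\lambda_i$, and a volume comparison then shows that $K$ is comparable to $\{\sum_i t_ib_i:|t_i|\le\lambda_i^{-1}\}$. This is more classical than citing Tao--Vu. It also yields a lattice basis rather than merely independent steps, so your pullback is an automorphism of $\T^d$.

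Your case split is slightly off: $R_j<1$ forces the $j$-th coordinate to vanish only when $C_dR_j<1$. However, the split and the induction on $d$ are unnecessary. One has $|U\Lambda|\asymp_d\prod_j\max(R_j,1)$, and the box $\prod_j\{|k_j|\le C_dR_j\}$ already contains $U\Lambda$ and has $O_d(N)$ points. This plays the same role as the paper's allowance $r<d$ for degenerate directions.

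\textbf{Transfer to the torus.} The paper samples on the exact grid $\prod_j\Z_{2N_j+1}$ and interpolates with tensor Dirichlet kernels. It accepts $\|P\|_{p\to p}\lesssim(\log N)^d$, which is absorbed into $N^{\eta/2}$. You oversample by a factor of about $2$ per coordinate and use a tensor de la Vall\'ee Poussin kernel. This gives an $O_d(1)$ operator bound at the cost of a group larger by $O(2^d)$, which only affects constants.

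One small slip: the endpoint that needs work is $p=\infty$, since that is what $\sup_x\frac1{|G|}\sum_y|\mathcal V(x-y/M)|$ controls. The $p=1$ bound is immediate from $\|\mathcal V\|_{L_1}=O_d(1)$, and Riesz--Thorin covers the intermediate $p$.

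In both versions $a$ depends only on $\eta$, and $N_0$ depends on $(d,\eta)$.
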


A qualitative form of non-rigidity of trigonometric functions was first proven by
A.~Sinai as an answer to the question posed in~\cite{Malykhin2024}
(personal communication). Namely, he proved that
$$
d_n(\{e^{ikx}\}_{k=-N}^N,L_1)=o(1)\quad\mbox{for $n=o(N)$.}
$$
See also the extended
version~\cite{Sinai} where the $L_p$ case is also considered (our work was done independent of this paper).

A second consequence concerns structured matrices. Again, let $G$ be a finite abelian
group. A $G$-circulant is a matrix
$$
\mathcal{C}_{G,f}(x,y) = f(x-y),\quad x,y\in G,
$$
defined by a function $f\colon G\to\C$. The usual circulant matrices correspond
to $G=\Z_N$.

\begin{proposition}[Bounded $G$-circulants]
\label{prop:intro-circulant}
Let $\eta>0$.  There are constants
$a=a(\eta)>0$ and $N_0=N_0(\eta)$ such that, whenever
$|G|=N\ge N_0$ and $|f|\le1$, there is a matrix $B$ satisfying
$\rank B\le N^{1-a}$ and
$$
    \|\mathcal{C}_{G,f}-B\|_{1,\infty}\le N^{1/2+\eta}.
$$
\end{proposition}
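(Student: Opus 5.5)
The plan is to diagonalize the circulant by the Fourier matrix and then reuse the approximation $B_G$ of $\mathcal F_G$ from Theorem~\ref{thm:main-rank}. Write
\[
 \mathcal C_{G,f}=\frac1N\,\mathcal F_G\,D\,\overline{\mathcal F_G},
 \qquad D=\diag(\hat f(\xi))_{\xi\in G},\quad \hat f(\xi)=\sum_y f(y)e^{-2\pi i \xi\cdot y}.
\]
A direct product of three matrices would lose control of the row-wise $\ell_1$ error, because $\hat f$ is only bounded by $N$ pointwise (and by $N$ in $\ell_2$ after normalization). So I would instead use the convolution structure directly: row $x$ of $\mathcal C_{G,f}$ is the translate $f(x-\cdot)$. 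Expanding $f$ in characters gives
\[
 f(x-y)=\frac1N\sum_\xi \hat f(\xi)\chi_\xi(x)\overline{\chi_\xi(y)},
\]
so row $x$ is a combination of rows of $\overline{\mathcal F_G}$ with coefficients $N^{-1}\hat f(\xi)\chi_\xi(x)$. Bounding the error by the $\ell_1$ norm of these coefficients times $N^{1/2+\eta}$ would require $\|\hat f\|_1/N\lesssim N^{\eta'}$, which is false for general bounded $f$ (it can be as large as $N^{1/2}$). So the naive route fails and a smarter choice is needed.

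I will therefore approximate the \emph{columns}, i.e.\ use both sides. Take $B_G=\mathcal F_G-E$ with $\|E\|_{1,\infty}\le N^{1/2+\eta}$, and by symmetry of $\mathcal F_G$ (the pairing is symmetric) also $\|E\|_{\infty,1}$-type control after transposing. Define
\[
 B:=\frac1N\,B_G\,D\,\overline{\mathcal F_G},
\]
which has rank at most $N^{1-a}$. Then $\mathcal C_{G,f}-B=\frac1N E D\overline{\mathcal F_G}$. Row $x$ of this is $\frac1N\sum_\xi E_{x\xi}\hat f(\xi)\overline{\chi_\xi(\cdot)}$. Note that $\frac1N\sum_\xi c_\xi\hat f(\xi)\overline{\chi_\xi}(y)$ is the convolution $f*\check c$ evaluated appropriately. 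Hence its $\ell_1$ norm in $y$ is at most $\|f\|_\infty\cdot\|\check c\|_1$ where $\check c(z)=\sum_\xi c_\xi \overline{\chi_\xi(z)}$ (up to normalization). This needs $\|\check{E_{x\cdot}}\|_1$ small, not $\|E_{x\cdot}\|_1$, so I would rather write $\mathcal C=\frac1N\overline{\mathcal F_G}D\mathcal F_G$ with the approximation placed so that the error row becomes $\sum_z f(x-z)\cdot(\text{row of }E)$.

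Concretely, the cleanest choice is to write $\mathcal C_{G,f}= \mathcal C_{G,f}\cdot \frac1N\overline{\mathcal F_G}\mathcal F_G$ and set $B:=\mathcal C_{G,f}\cdot\frac1N\overline{\mathcal F_G}\,B_G$, with rank at most $N^{1-a}$. The error is $\frac1N\mathcal C_{G,f}\overline{\mathcal F_G}E$. Here $\frac1N\mathcal C_{G,f}\overline{\mathcal F_G}=\frac1N\overline{\mathcal F_G}\,\widetilde D$ with a diagonal $\widetilde D$ whose entries are Fourier coefficients of $f$, so each row of the prefactor is $N^{-1}\hat f(\xi)\overline{\chi_\xi(x)}$. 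This again requires $\sum_\xi|\hat f(\xi)|/N$ to be small, and that quantity is $\le N^{1/2}$ by Cauchy--Schwarz and Parseval, giving only $N^{1+\eta}$. The main obstacle is thus this loss of $N^{1/2}$.

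To overcome it, I expect the actual proof of Theorem~\ref{thm:main-rank} yields $E$ with additional structure: for instance, $B_G$ may itself be $G$-invariant in the sense that the error matrix is $E_{x,y}=\chi_x(y)\,\epsilon(x,y)$, with the approximation built from translates, or equivalently $\ell_1\to\ell_1$ control $\|E\|_{1\to1}$ in both row and column norms. With both $\|E\|_{1,\infty}$ and the column version bounded by $N^{1/2+\eta}$, I would write $\mathcal C=\frac1{N}\mathcal F_G D\overline{\mathcal F_G}$ and use a two-sided decomposition in which the error is controlled as an operator $\ell_\infty\to\ell_1$ applied to the bounded vectors $f(x-\cdot)$. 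My first attempt will be to extract from the proof of Theorem~\ref{thm:main-rank} a version of $B_G$ commuting with translations up to modulation, i.e.\ approximating $\mathcal F_G\,\diag(\chi_z)$ uniformly in $z$ by the same low-rank space. Then I can average over $z$ with weights $f$ to approximate the circulant row by row.
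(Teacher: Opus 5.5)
\textbf{There is a genuine gap: the proposal does not reach a proof.}

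You diagnose the obstacle correctly. Any one-sided placement of the approximant ($B_GD\overline{\mathcal F_G}$, or $\mathcal C_{G,f}\overline{\mathcal F_G}B_G/N$) costs a factor $N^{-1}\sum_\xi|\hat f(\xi)|$, which can be of order $N^{1/2}$. But the way out you sketch does not work.

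The matrix $\mathcal F_G\diag(\chi_z)$ is only a row permutation of $\mathcal F_G$: its rows are $\chi_{\xi+z}$. So approximating it ``uniformly in $z$ by the same low-rank space'' is automatic and gives nothing new. Averaging over $z$ with weights $f$ then brings back exactly the coefficient $\ell_1$-norm you wanted to avoid. You also do not need any extra structure of $B_G$ (column bounds, translation covariance); the row-wise bound of Theorem~\ref{thm:main-rank} suffices.

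The missing idea has two parts. First, keep the large Fourier coefficients exactly. Second, approximate \emph{both} Fourier factors, so that the residual is quadratic in the error.

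Split $D=D_1+D_0$, where $D_1$ keeps the $m$ largest values of $|\hat f(\xi)|$. Parseval gives $\sum_\xi|\hat f(\xi)|^2=N\sum_y|f(y)|^2\le N^2$, hence $\Delta_\infty(D_0)\le N/\sqrt{m+1}$. Writing $\mathcal F_G=B_G+E$ and $\overline{\mathcal F_G}=\overline{B_G}+\overline E$, we get
\[
 \mathcal F_G D\overline{\mathcal F_G}
 =\mathcal F_G D_1\overline{\mathcal F_G}
 +B_G D_0\overline{\mathcal F_G}
 +E D_0\overline{B_G}
 +E D_0\overline E .
\]
The first three terms have total rank at most $m+2\rank B_G$. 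Since $\|\cdot\|_{1,\infty}$ is the $\ell_\infty\to\ell_\infty$ operator norm, it is submultiplicative. So, when $\|E\|_{1,\infty}\le N^{1/2+\eta'}$,
\[
 \frac1N\bigl\|E D_0\overline E\bigr\|_{1,\infty}
 \le\frac1N\,\Delta_\infty(D_0)\,\|E\|_{1,\infty}^2
 \le\frac{N^{1+2\eta'}}{\sqrt{m+1}} .
\]
Take $m=\lfloor N^{1-\eta}\rfloor$ and $\eta'=\eta/8$. The error is then at most $N^{1/2+3\eta/4}$, and the rank is at most $N^{1-\eta}+2N^{1-a(\eta/8)}\le N^{1-a}$.

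The second copy of $E$ supplies exactly the factor $N^{-1/2}$ that your one-sided constructions lose. This is the content of the paper's Lemma~\ref{lem:circulant-transfer}.
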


This statement is motivated by a work of Dvir and Liu~\cite{DvirLiu2020}.

The circulant result also gives low-rank bilinear approximation of 
kernels generated by trigonometric polynomials.

\begin{corollary}[Bilinear approximation]
\label{cor:intro-translation-kernels}
Let $\eta>0$.  There are constants $a=a(\eta)>0$ and $N_0=N_0(\eta)$
such that the following holds.  For every trigonometric polynomial
\[
 T(t)=\sum_{|k|\le N}c_k e^{2\pi i kt},\qquad N\ge N_0,
\]
there exist an integer $r\le N^{1-a}$ and trigonometric polynomials
$u_1,\ldots,u_r,v_1,\ldots,v_r$ such that, simultaneously for all
$1\le p<2$,
\[
 \left\|T(x-y)-\sum_{\nu=1}^r u_\nu(x)v_\nu(y)\right\|_{L_p(\T^2)}
 \le C_\eta\|T\|_\infty N^{-\alpha_p+\eta}.
\]
\end{corollary}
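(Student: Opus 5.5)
We derive Corollary~\ref{cor:intro-translation-kernels} by discretizing the kernel $T(x-y)$ on a grid, applying Proposition~\ref{prop:intro-circulant} to the resulting circulant, and interpolating back to trigonometric polynomials.

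\textbf{Discretization.} Normalize $\|T\|_\infty=1$. Fix $M:=4N+1$ (any $M\asymp N$ with $M>2N$ works) and the group $G=\Z_M$ with grid points $t_j=j/M$. Put $f(j):=T(j/M)$, so $|f|\le1$. Then the $G$-circulant $\mathcal{C}_{G,f}(x,y)=f(x-y)=T(t_x-t_y)$ is exactly the sampled kernel. Proposition~\ref{prop:intro-circulant}, applied with $\eta/2$, gives $B$ with $\rank B\le M^{1-a}$ and
\[
 \max_x\sum_y|f(x-y)-B_{x,y}|\le M^{1/2+\eta/2}.
\]
Since entries are at most $1+|B_{x,y}|$, we may also arrange entrywise boundedness. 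If needed, we truncate or replace $B$ by its convolution-averaged version so that $\|\mathcal C-B\|_{\max}\lesssim1$. Interpolating between row-$\ell_1$ and $\ell_\infty$ then gives, in normalized discrete $L_p$ on $G\times G$,
\[
 \Bigl(M^{-2}\sum_{x,y}|\mathcal C-B|^p\Bigr)^{1/p}\le\bigl(M^{-1}\max_x\textstyle\sum_y|\cdot|\bigr)^{1/p}\lesssim M^{-1/(2p)+\eta}.
\]
This is weaker than $M^{-\alpha_p}$. The correct route is instead Hölder against the $\ell_2$ bound: since $|\mathcal C-B|$ has row $\ell_1$-norm $\le M^{1/2+\eta}$ and, after truncation, row $\ell_\infty\lesssim1$, we get row $\ell_p^p\le M^{1/2+\eta}$. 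Hence the normalized error is $\le (M^{-1}M^{1/2+\eta})^{1/p}=M^{-1/(2p)+\eta/p}$. This still does not give $\alpha_p$. So the main obstacle is the $p$-dependence, and I would handle it by interpolating between $p=1$ and $p=2$. At $p=1$ the normalized error is $M^{-1/2+\eta}$, matching $\alpha_1=1/2$. At $p=2$ the error is $O(1)$, provided $B$ is controlled in $\ell_2$, e.g. by choosing $B=P\mathcal C$ for a projection-like $P$ or by cutting $B$ at height $O(1)$. Log-convexity of $L_p$ norms then gives $\|\cdot\|_p\le\|\cdot\|_1^{\theta}\|\cdot\|_2^{1-\theta}$ with $1/p=\theta+(1-\theta)/2$, so $\theta=2/p-1$. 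The result is $M^{-(1/p-1/2)+\eta}=M^{-\alpha_p+\eta}$, simultaneously for all $p$.

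\textbf{Back to the continuum.} Let $D$ be the interpolation operator mapping grid data to trigonometric polynomials of degree $\le 2N$, namely de la Vallée Poussin-type interpolation on $M$ points. It reproduces polynomials of degree $\le N$, so applying $D\otimes D$ to the samples of $T(x-y)$ recovers $T(x-y)$ exactly. The operator $D$ is bounded from discrete $L_p$ to $L_p(\T)$ for all $1\le p\le\infty$ uniformly, via Marcinkiewicz–Zygmund-type inequalities for de la Vallée Poussin kernels. Write $B=\sum_{\nu\le r}b_\nu c_\nu^{T}$ and set $u_\nu=Db_\nu$, $v_\nu=Dc_\nu$. Then $T(x-y)-\sum u_\nu(x)v_\nu(y)=(D\otimes D)(\mathcal C-B)$, and its $L_p(\T^2)$ norm is at most $C$ times the discrete error. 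Finally, $r\le M^{1-a}\le N^{1-a/2}$ for large $N$, so we rename $a$.

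\textbf{Main obstacle.} The hardest step is securing the $p=2$ endpoint, i.e.\ an $\ell_2$-bounded approximant, so that interpolation yields the sharp exponent $\alpha_p$ uniformly in $p$ with a single $B$. I would first check whether the construction behind Proposition~\ref{prop:intro-circulant} already yields a $B$ with $\|B\|_{\max}=O(1)$. If it does not, I would replace $B$ by its entrywise truncation to $|B_{x,y}|\le2$, accepting a rank increase. Failing that, I would truncate the low-rank factors, which costs an $N^{\eta}$ loss, absorbed into $\eta$.
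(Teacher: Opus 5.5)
Your discretization and your transfer back to $\T^2$ are fine. The de la Vallée Poussin operator on $M=4N+1$ nodes even avoids the $\log^2N$ factor that the paper's Dirichlet interpolation $P_N\otimes P_N$ on $2N+1$ nodes loses and absorbs into $\eta$.

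The gap is the middle step, going from the row-$\ell_1$ bound of Proposition~\ref{prop:intro-circulant} to $L_p$. Every route you try needs a property of $B$ that the proposition does not supply. One is an entrywise bound $|\mathcal C-B|\lesssim1$. The other is an $O(1)$ bound on the normalized $L_2$ error of the \emph{same} $B$, to serve as the interpolation endpoint. You establish neither.

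The entrywise bound is in fact false in general. Take $T$ with $\|T\|_\infty\le1$ and $|c_k|\gtrsim N^{-1/2}$ for all $|k|\le N$, for instance a normalized Rudin--Shapiro polynomial. The sampled circulant equals $\sum_{|k|\le N}c_k\phi_k\phi_k^*$, where $\phi_k=(e^{2\pi ikx_i})_i$ are pairwise orthogonal with $\|\phi_k\|_2^2=M$. So it has $2N+1$ singular values $\gtrsim\sqrt M$. By Eckart--Young, if $\rank B=o(M)$ then $\|\mathcal C-B\|_F^2\gtrsim M^2$, and some row of $\mathcal C-B$ has squared $\ell_2$-norm $\gtrsim M$. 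Bounded entries together with row $\ell_1$-norm at most $M^{1/2+\eta}$ would cap that squared norm at $O(M^{1/2+\eta})$.

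Your bound $M^{-1/(2p)+\eta/p}$ should have raised a flag for this reason. It is not weaker than $M^{-\alpha_p}$; it is \emph{stronger} for $p>1$, and at $p=2$ it would contradict $L_2$-rigidity.

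The proposed repairs do not help either:
\begin{itemize}
\item Entrywise truncation of a low-rank matrix can raise its rank up to $M$.
\item Replacing $B$ by some $P\mathcal C$ gives an $L_2$ endpoint for a different matrix than the one carrying the $\ell_1$ bound, so the two endpoints cannot be combined.
\item ``Truncating the low-rank factors'' is not an argument.
\end{itemize}

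No endpoint is needed at all. For unnormalized vectors $\|v\|_{\ell_p}\le\|v\|_{\ell_1}$. So from $\|\mathcal C-B\|_{1,\infty}\le M^{1/2+\eta/2}$ you get, for every $p\ge1$ simultaneously and with the same $B$,
\[
 \Delta_p(\mathcal C-B)=M^{-1/p}\|\mathcal C-B\|_{p,\infty}\le M^{-1/p}\|\mathcal C-B\|_{1,\infty}\le M^{-1/p+1/2+\eta/2}=M^{-\alpha_p+\eta/2}.
\]
The normalized $L_p(G\times G)$ norm of $\mathcal C-B$ is at most $\Delta_p(\mathcal C-B)$, and your interpolation step then finishes the proof. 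This one-line inequality is exactly how the paper handles the $p$-dependence. The sharp exponent is already encoded in the row-$\ell_1$ bound at level $M^{1/2}$, and no information about the size of the entries of $B$ is required.
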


Compare Temlyakov's Problem~5.6 in~\cite{Temlyakov2003}.

Finally, these finite-system estimates have consequences for weighted
coefficient classes. The periodic weighted Wiener ball is
\[
 \mathcal{A}_\beta
 :=\left\{f\in L_1(\T)\colon \sum_{k\in\Z}(1+|k|)^\beta|\widehat{f}(k)|\le1\right\}.
\]
Weighted Wiener spaces are classical smoothness spaces defined by absolute
summability of weighted Fourier coefficients.  Nguyen, Nguyen and Sickel
\cite{NguyenNguyenSickel2022} studied Kolmogorov and other $s$-numbers of
embeddings of weighted Wiener algebras, with particularly complete results
for the $L_2$ target space.
More recently, Moeller, Stasyuk and
Ullrich \cite{MoellerStasyukUllrich2026} considered best $m$-term
trigonometric approximation and sampling recovery in weighted Wiener
spaces.

% There is one more article on the subject,
% Y.~Chen, X.~Pan, Y.~Xu and G.~Chen,
%``The Approximation Characteristics of Weighted $p$-Wiener Algebra''
% The estimates in L_q, q<2, go through L_2 so nothing new for us

\begin{corollary}[Weighted Wiener class]
\label{cor:intro-trig-wiener}
    For every $1\le p<2$ and $\beta>0$ there are $B=B(p,\beta)>0$ and $b=b(p,\beta)>0$ such that
\[
    n^{-\beta-B}
    \lesssim d_n^{\mathbb{C}}(\mathcal{A}_\beta,L_p(\T))
 \lesssim n^{-\beta-b}.
\]
    Moreover, one may take $B=\alpha_p\min\{1,2\beta\}$, $b=c(p)\min\{1,\beta\}$.
\end{corollary}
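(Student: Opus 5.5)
We prove the two bounds separately: the lower bound from Theorem~\ref{thm_intro_lower} applied to a block of exponentials, and the upper bound from a dyadic decomposition of the Fourier series, using the Corollary to Theorem~\ref{thm:main-rank} for the cyclic group on each block.

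\textbf{Lower bound.} Let $M\ge 2n$ and consider the functions $g_k=(1+M)^{-\beta}e^{2\pi ikt}$ with $|k|\le M$. Each $g_k$ lies in $\mathcal{A}_\beta$ up to a constant factor, and $\{e^{2\pi ikt}\}_{|k|\le M}$ is a complex orthonormal system bounded by $1$ with $2M+1$ elements. Since $n\le (2M+1)/4$, Theorem~\ref{thm_intro_lower} gives
\[
d_n^{\C}(\mathcal A_\beta,L_p)\gtrsim M^{-\beta}\min\{1,n^{-1/p}M^{1/2}\}.
\]
We optimize over $M\ge 2n$.
\begin{itemize}
\item If $\beta\ge1/2$, take $M\asymp n$. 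This gives $n^{-\beta-\alpha_p}$, matching $B=\alpha_p$.
\item If $\beta<1/2$, take $M\asymp n^{2/p}$, where the minimum equals $1$. This gives $n^{-2\beta/p}=n^{-\beta-2\beta\alpha_p}$, matching $B=2\alpha_p\beta$.
\end{itemize}
Both cases together give $B=\alpha_p\min\{1,2\beta\}$.

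\textbf{Upper bound.} Split $f\in\mathcal A_\beta$ into dyadic blocks $f=\sum_j \Delta_j f$, where $\Delta_j f$ has frequencies $2^{j-1}\le|k|<2^j$, and write $\ell_j=\sum_{k\in\text{block }j}|\widehat f(k)|\le 2^{-\beta j}$. By the triangle inequality, approximating block $j$ in $L_p$ amounts to approximating the exponentials $e^{2\pi ikt}$, $|k|<2^j$, uniformly, with error multiplied by $\ell_j$.

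\emph{Passing from $\T$ to $\Z_N$.} Take $N\asymp 2^{j+1}$ and sample at the grid $x\in\Z_N$. I would avoid Marcinkiewicz-type inequalities and instead average over shifts. Approximate $e^{2\pi ikt}$ by functions of the form $\sum_\nu c_\nu\, u_\nu(t)$, and treat the kernel $e^{2\pi ik(x+s)}$, $x\in\Z_N/N$, $s\in[0,1/N)$, as $e^{2\pi iks}$ times a row of $\mathcal F_N$. An approximant of rank $r$ for the rows of $\mathcal F_N$ with row error $\epsilon$ in $L_p(\Z_N)$ then yields a subspace of dimension $r$ of $L_p(\T)$. It is spanned by the functions $t\mapsto \sum_x B_{k,x}\1_{[x/N,(x+1)/N)}(t)\,e^{2\pi ik(t-x/N)}$; these depend on $k$, so they must be handled more carefully. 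The cleaner route is the convex-spectrum Proposition~\ref{prop:intro-convex-spectrum} with $d=1$ and $K=[-2^j,2^j]$, which directly gives dimension $m_j=2^{j(1-a)}$ and error $2^{-j(\alpha_p-\eta)}$ for every exponential in the block.

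\emph{Combining the blocks.} For blocks with $2^j\le n^{1/(1-a)}/J$ use these approximations, and use nothing (error $\ell_j$) for the high blocks. The total dimension is $\sum_j m_j\lesssim n$, and the error is
\[
\sum_{j\le j_0}2^{-\beta j}2^{-j(\alpha_p-\eta)}+\sum_{j>j_0}2^{-\beta j},\qquad 2^{j_0}\asymp n^{1/(1-a)}.
\]
The second sum is about $n^{-\beta/(1-a)}=n^{-\beta-\beta a/(1-a)}$. The first sum is bounded by its smallest-$j$ terms, which is bad. So for small $j$ we instead use exact inclusion of all frequencies $|k|\le n^{\theta}$ at cost $n^\theta$ dimensions. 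The first sum then runs only over $j\ge\theta\log n$ and is $\lesssim n^{-\theta(\beta+\alpha_p-\eta)}$. Choosing $\theta<1$ close to $1$ with $\theta(\beta+\alpha_p-\eta)>\beta+b$ needs $b<\alpha_p$, roughly $(1-\theta)\beta<\alpha_p$. Thus $b\asymp\min\{\beta a/(1-a),\ \alpha_p\}$, which after fixing $\eta=\alpha_p/2$ gives $b=c(p)\min\{1,\beta\}$.

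\textbf{Main obstacle.} The delicate step is the bookkeeping above: the block dimensions and the choice of $\theta$ must be balanced so that both sums beat $n^{-\beta}$ by a power. The existence of the required approximation itself is supplied by Proposition~\ref{prop:intro-convex-spectrum}.
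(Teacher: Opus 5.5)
Your proof is correct. The lower bound is the paper's: the paper uses $\{e_0,\dots,e_{N-1}\}\subset N^{\beta}\mathcal A_\beta$ with $N=4n$ and with $N\asymp n^{2/p}$, and optimizes exactly as you do.

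The upper bound reaches the same conclusion by a more roundabout route. The paper takes a \emph{single} subspace $W$, $\dim W\le n/2$, from Proposition~\ref{prop:consecutive-frequencies} at the top scale $N\asymp n^{1/(1-a)}$. This $W$ approximates every $e_k$ with $|k|\le N$ to within $N^{-\alpha_p/2}$, not only the top block. The paper then adds $\Span\{e_k:|k|<n/4\}$. From the absolutely convergent expansion of $f\in\mathcal A_\beta$ it gets $\dist(f,V_n)\le\sup_k(1+|k|)^{-\beta}\dist(e_k,V_n)\lesssim\max\{n^{-\beta}N^{-\alpha_p/2},N^{-\beta}\}$. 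This yields $b=\min\{\alpha_p/2,a\beta\}/(1-a)$ in one line.

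You instead attach a separate subspace $W_j$ to each dyadic block and sum the block errors using $\ell_j\lesssim 2^{-\beta j}$. Each $W_j$ is accurate only at its own scale $2^{-j(\alpha_p-\eta)}$, so the low blocks come out badly. That is what leads you to the cutoff $n^{\theta}$ and the balancing of $\theta$. The bookkeeping does close: the geometric sums work, $\sum_j m_j\lesssim n$, and you get a gain of order $\min\{\alpha_p,\beta a/(1-a)\}$. But the balancing is unnecessary. The paper also includes low frequencies exactly, up to $|k|<n/4$, which is needed anyway for large $\beta$. Taking $\theta=1$ in your scheme, i.e.\ exact inclusion of $|k|\le n/4$, works equally well.

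So here the per-block subspaces buy nothing over the top-scale one. A dyadic scheme would only pay off for coefficient classes that are not $\ell_1$-hulls of the weighted exponentials. Finally, the abandoned ``passing from $\T$ to $\Z_N$'' digression can simply be deleted. Proposition~\ref{prop:consecutive-frequencies}, or Proposition~\ref{prop:intro-convex-spectrum} with $d=1$, already supplies the needed block approximation.
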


The rest of the paper is organized as follows.  Section~2 proves the
quantitative lower bound and the Sidon rigidity theorem.  Section~3 proves
Theorem~\ref{thm:main-rank}.
Section~4 develops the consequences stated above.

\section{Lower bounds and rigidity}

\subsection{A quantitative lower bound}

We need the following definition from~\cite{Hazan}.
Let $M$ be a compact in $\R^d$.
A finite set of vectors $v_1,\ldots,v_r\in M$ is called a \emph{volumetric spanner} for $M$ if
every vector $v\in M$ has a representation
\begin{equation}\label{eq:volumetric-spanner}
v = \sum_{k=1}^r c_k v_k,
\quad \mbox{with } \sum_{k=1}^r c_k^2 \le 1.
\end{equation}

Hazan and Karnin~\cite{Hazan} prove that every compact set has a volumetric spanner of size
$r\le 12d$. The notion of volumetric spanner is related to well--known
convex geometry concepts such as Auerbach basis and L\"owner--John ellipsoid.

Although any linear bound for the size of the volumetric spanner is sufficient for us, we also cite a recent
paper~\cite{BhaskaraMahabadiVakilian2023} by Bhaskara, Mahabadi and Vakilian, with a more precise result and a
number of relevant references. The following statement is contained
in~\cite[Theorem~3.6]{BhaskaraMahabadiVakilian2023}.

\begin{exttheorem}
    \label{thm_volumetric}
    Any finite set $M\subset\R^d$ has a volumetric
    spanner of size at most $2d-1$.
\end{exttheorem}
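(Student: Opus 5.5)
The plan is to realize the spanner as a subset $S\subset M$ for which the Gram operator $A_S:=\sum_{u\in S}uu^{\top}$ dominates every $v\in M$ in the sense $v^{\top}A_S^{-1}v\le1$, and to find such an $S$ by maximizing $\det A_S$ over all subsets of size $2d-1$.

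\emph{Reductions.} Replace $\R^d$ by $\Span M$, so that $M$ spans the space and the dimension only decreases. If $|M|\le 2d-1$, take $S=M$: each $v\in M$ is itself a spanner element with coefficient vector $e_v$, so \eqref{eq:volumetric-spanner} holds. Otherwise, every spanning $S$ of size $2d-1$ has $A_S$ positive definite.

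\emph{Key reduction: leverage bound implies spanner.} Suppose $v^{\top}A_S^{-1}v\le1$ for all $v\in M$. Let $V$ be the $d\times(2d-1)$ matrix with columns $u\in S$, so $A_S=VV^{\top}$, and put $c:=V^{\top}A_S^{-1}v$. Then $Vc=v$ and $\sum c_k^2=c^{\top}c=v^{\top}A_S^{-1}v\le1$, which is exactly \eqref{eq:volumetric-spanner}.

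\emph{Extremal choice and swap argument.} Since $M$ is finite, choose $S$ with $|S|=2d-1$ maximizing $\det A_S$; the maximum is positive because $M$ spans. Suppose, for contradiction, that some $v\in M$ has $\tau:=v^{\top}A_S^{-1}v>1$. Then $v\notin S$, since elements of $S$ have leverage at most $1$. Set $A'=A_S+vv^{\top}$. By the matrix determinant lemma, $\det A'=(1+\tau)\det A_S$. Removing some $u\in S$ gives
\[
\det(A'-uu^{\top})=(1-\tau'_u)\det A',\qquad \tau'_u:=u^{\top}A'^{-1}u.
\]
The leverages in $A'$ over $S\cup\{v\}$ sum to $\operatorname{tr}(A'^{-1}A')=d$, and $\tau'_v=\tau/(1+\tau)$. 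Hence some $u\in S$ has
\[
\tau'_u\le \frac{d-\tau/(1+\tau)}{2d-1}.
\]
For the swapped set $S'=S\setminus\{u\}\cup\{v\}$, the determinant ratio is $(1+\tau)(1-\tau'_u)$. A short computation shows this exceeds $1$ if and only if
\[
(1+\tau)(d-1)+\tau>2d-1,
\]
that is, if and only if $\tau d>d$, i.e. $\tau>1$. So $S'$ has strictly larger determinant, contradicting the maximality of $S$. Therefore all leverages are at most $1$, and $S$ is a spanner of size $2d-1$.

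\emph{Main obstacle.} The delicate step is the swap inequality. The size $2d-1$ is exactly what makes the threshold land at $\tau>1$, so the leverage bookkeeping must be done carefully: use the leverage of $v$ in $A'$ rather than in $A_S$, and note that the total leverage equals $d$ and not the rank of some smaller space. I would first verify this computation; I expect the degenerate cases to be routine.
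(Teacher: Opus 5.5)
Your argument is correct and complete. The maximizer of $\det A_S$ over $|S|=2d-1$ is swap-stable, and your averaging of the leverages in $A'=A_S+vv^{\top}$ yields exactly the sharp threshold $\tau\le d/(r-d+1)=1$ for $r=2d-1$; after passing to $\Span M$ you of course use $2d'-1\le 2d-1$.

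The paper gives no proof of its own and simply cites Bhaskara--Mahabadi--Vakilian. Their bound comes from a local search on this same determinant, so your proposal is a self-contained reconstruction of the cited route.
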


Now we are ready to prove Theorem~\ref{thm_intro_lower}.

\begin{proof}
Fix a subspace $V$ with $\dim V=n$ and choose approximants $g_k\in V$ such that
\[
 \|f_k-g_k\|_p\le\varepsilon,
 \qquad k=1,\dots,N.
\]
Denote $\tau:=n\eps^p/N^{p/2}$. We are going to prove that either $\eps\ge
c(p,K)$ or $\tau\ge c(p,K)$;
in the latter case we have $\eps \gtrsim n^{-1/p}N^{1/2}$.

Identify the $n$-dimensional space $V$ linearly with $\R^n$.
    Consider the set $\{g_k\}_{k=1}^N\subset V$ and take a volumetric spanner
    for it: $g_{j_1},\dots,g_{j_r}$, $r\le 2n-1$, using Theorem~\ref{thm_volumetric}.
    By the representation~\eqref{eq:volumetric-spanner} we obtain
    for all $k$ that
$$
 |g_k(x)|
 \le\left(\sum_{s=1}^r|g_{j_s}(x)|^2\right)^{1/2} 
 \le\left(\sum_{s=1}^r|f_{j_s}(x)|^2\right)^{1/2}
    +\left(\sum_{s=1}^r|f_{j_s}(x)-g_{j_s}(x)|^2\right)^{1/2} 
 \le K\sqrt r+R(x),
$$
where
\[
    R(x):=\left(\sum_{s=1}^r|f_{j_s}(x)-g_{j_s}(x)|^2\right)^{1/2}.
\]
Since $p\le2$,
\[
    \|R\|_p^p = \int_\Omega\left(\sum_{s=1}^r|f_{j_s}(x)-g_{j_s}(x)|^2\right)^{p/2}\,d\mu
    \le\sum_{s=1}^r\|f_{j_s}-g_{j_s}\|_p^p
    \le2n\varepsilon^p.
\]
Consider the set
\[
    E:=\{x\colon R(x)\le K\sqrt N\}.
\]
Then
\[
    \mu(E^c)\le \frac{2n\eps^p}{(K\sqrt N)^p} = 2K^{-p}\tau.
\]
Since $r\le 2n\le N$, on $E$ we have
\begin{equation}
    \label{uniform_fg_over_E}
 |g_k|\le2K\sqrt N,
 \qquad
 |f_k-g_k|\le3K\sqrt N.
 \end{equation}

Put
\[
 I:=\sum_{k=1}^N\int_E f_kg_k\,d\mu.
\]
We give a lower bound on $I$ using the restriction $\|f_k\|_\infty\le K$ and
    the approximation bound
    $\|f_k-g_k\|_1 \le \|f_k-g_k\|_p\le \eps$:
$$
I = \sum_{k=1}^N \int_\Omega f_k^2\,d\mu - \sum_{k=1}^N\int_{E^c}f_k^2\,d\mu
- \sum_{k=1}^N \int_E f_k(f_k-g_k)\,d\mu
    \ge N - N K^2 \cdot 2K^{-p}\tau - NK\eps.
$$
Therefore,
\begin{equation}\label{eq:lower-correlation}
 \frac{I}{N}
 \ge1-K\varepsilon-2K^{2-p}\tau.
\end{equation}

Now we obtain an upper bound on $I$. Let
\[
 W:=\operatorname{span}\{g_1\mathbf1_E,\dots,g_N\mathbf1_E\}
 \subset L_2(\Omega).
\]
Then $\dim W\le n$.  If $P_W$ is the orthogonal projection onto $W$,
Bessel's inequality gives
\[
 \sum_{k=1}^N\|P_Wf_k\|_2^2\le n.
\]
Therefore
\[
 I
 = \sum_{k=1}^N \int_\Omega P_W f_k \cdot g_k\1_E\,d\mu
 \le \sum_{k=1}^N \|P_W f_k\|_2\cdot\|g_k\1_E\|_2
 \le\sqrt n\left(\sum_{k=1}^N\|g_k\mathbf1_E\|_2^2\right)^{1/2}.
\]
By the triangle inequality,
$$
 \left(\sum_{k=1}^N\|g_k\mathbf1_E\|_2^2\right)^{1/2}
 \le \left(\sum_{k=1}^N\|f_k\1_E\|_2^2\right)^{1/2}
    + \left(\sum_{k=1}^N\|(f_k-g_k)\1_E\|_2^2\right)^{1/2}.
$$
It is clear that the first term is at most $N^{1/2}$. We bound the second term
using that $\|f_k-g_k\|_p\le\eps$ and
the uniform bound~\eqref{uniform_fg_over_E} on $f_k-g_k$ over $E$:
$$
\sum_{k=1}^N \|(f_k-g_k)\1_E\|_2^2 \le N\cdot \eps^p (3K\sqrt{N})^{2-p}.
$$
Summing up, we obtain
\begin{equation}\label{eq:lower-bessel}
 \frac{I}{N}
    \le\sqrt{\frac nN}+\sqrt{n}(3K)^{1-p/2}\eps^{p/2}N^{-p/4} = \sqrt{\frac
    nN}+(3K)^{1-p/2}\sqrt\tau.
\end{equation}
Combining \eqref{eq:lower-correlation} and \eqref{eq:lower-bessel},
\[
 1-\sqrt{\frac nN}
 \le K\varepsilon+2K^{2-p}\tau+(3K)^{1-p/2}\sqrt\tau.
\]
Since $n\le N/2$, the left-hand side is bounded from below.
Thus, with a constant depending only on $p$ and $K$, either
$\varepsilon\ge c$ or $\tau\ge c$.

    The required bound~\eqref{eq:intro-quantitative-lower} 
    follows since $V$ and $g_k$ are arbitrary.

    The complex case follows by a realification argument.
Consider the probability space
$\widetilde\Omega=\Omega\times\{1,2\}$
and put
$\widetilde{f}_k(x,1):=\sqrt2\,\operatorname{Re}f_k(x)$,
$\widetilde{f}_k(x,2):=\sqrt2\,\operatorname{Im}f_k(x)$.
Then $(\widetilde f_k)$ is a real uniformly bounded orthonormal system and
\begin{equation}\label{eq:intro-realification}
 d_n^{\mathbb C}(\{f_k\},L_p)
 \ge d_{2n}(\{\widetilde f_k\},L_p).
\end{equation}
\end{proof}

\subsection{Sidon character systems}

In the proof of Theorem~\ref{thm:sidon-rigidity} it will be convenient to use
the average Kolmogorov width
\[
 d_n^{\avg,\mathbb C}(\{f_1,\dots,f_N\},L_1)
 :=\inf_{\dim_{\mathbb C}V\le n}
   \frac1N\sum_{j=1}^N\dist_{L_1}(f_j,V).
\]
For every finite complex family $f_1,\dots,f_N\in L_1(\Omega,\mu)$,
\begin{equation}\label{eq:sidon-transposition}
 N d_n^{\avg,\mathbb C}(\{f_1,\dots,f_N\},L_1)
 =\inf_{\substack{V\subset\C^N\\ \dim_{\mathbb C}V\le n}}
   \int_\Omega
   \dist_{\ell_1^N}\bigl((f_1(\omega),\dots,f_N(\omega)),V\bigr)
   \,d\mu(\omega).
\end{equation}
Indeed, the coordinates of any measurable $V$-valued map span a subspace
of $L_1$ of dimension at most $\dim V$, and conversely the vector formed by
$N$ approximants from a common $n$-dimensional subspace takes values in an
$n$-dimensional subspace of $\C^N$.
See~\cite{Malykhin2024} for more details.

We will require one short lemma. Recall that Steinhaus variable is a complex random
variable with uniform distribution on the unit circle $|z|=1$.
\begin{lemma}
\label{lem_steinhaus}
If $z=(z_1,\dots,z_N)$ is a vector of independent Steinhaus variables  and
$V\subset\C^N$ satisfies $\dim_{\mathbb C}V\le(1-\delta)N$, then
\begin{equation}\label{eq:steinhaus-rigidity}
 \E \dist_{\ell_1^N}(z,V)\ge c(\delta) N.
\end{equation}
\end{lemma}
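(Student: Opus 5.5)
We argue by comparison with a vector that has a bounded density in the disc, followed by a net (volume) argument. Put $F(x):=\dist_{\ell_1^N}(x,V)$. This is a convex seminorm on $\C^N$: it is subadditive and absolutely homogeneous, and $F(x+v)=F(x)$ for $v\in V$. We will not work with $\dist_{\ell_1}(z,V)$ directly, because the natural dual certificate $w=Q_{V^\perp}z/\|Q_{V^\perp}z\|_\infty$ loses a factor $\sqrt{\log N}$. Instead we pass to a smoother random vector.

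\textbf{Step 1 (contraction).} Let $g_1,\dots,g_N$ be i.i.d.\ standard complex Gaussians independent of $z$, fix a large $M=M(\delta)$, and put $a_j:=|g_j|\1\{|g_j|\le M\}/M\in[0,1]$ and $h_j:=a_jz_j$. Condition on $a$. Each $z_j$ is rotation invariant, and $F$ is convex with $F(e^{i\theta}x)=F(x)$. By the contraction principle, which for Steinhaus sequences follows by writing each $a_j\in[0,1]$ as an average of two unimodular numbers and using convexity, we get $\E_zF(az)\le\E_zF(z)$. Hence $\E F(h)\le\E F(z)$, and it suffices to prove $\E F(h)\ge c(\delta)N$. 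The coordinates $h_j$ are i.i.d., rotation invariant, supported in the unit disc, and equal to $g_j/M$ on $\{|g_j|\le M\}$. Consequently $h_j$ has density at most $C(M)$ on the punctured disc, with only an atom at $0$ of mass $e^{-M^2/2}$.

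\textbf{Step 2 (discarding the atom).} Let $S:=\{j\colon h_j\ne0\}$. By Chernoff's bound, once $M$ is large enough we have $|S|\ge(1-\delta/2)N$ with probability $1-e^{-cN}$. Restricting to the coordinates in $S$ can only decrease the $\ell_1$-distance. The projection of $V$ to $\C^S$ has complex dimension at most $(1-\delta)N\le(1-\delta')|S|$ with $\delta'\asymp\delta$. So it remains to show the following. If $h$ is a vector of $N'$ i.i.d.\ coordinates with density at most $C$ in the unit disc, and $V'\subset\C^{N'}$ has $\dim_{\C}V'\le(1-\delta')N'$, then $\PP\bigl(\dist_{\ell_1}(h,V')\le\eps N'\bigr)\le e^{-N'}$ for some $\eps=\eps(\delta,C)$. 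We apply this conditionally on $S$, using that given $S$ the $h_j$, $j\in S$, are i.i.d.\ with the conditional bounded density.

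\textbf{Step 3 (net and small ball).} If $\dist_{\ell_1}(h,V')\le\eps N'$, then the approximant $v\in V'$ satisfies $\|v\|_1\le2N'$. Take an $\eps N'$-net $\mathcal N$ in $\ell_1$ for $\{v\in V'\colon\|v\|_1\le2N'\}$. Since the real dimension is at most $2(1-\delta')N'$, we may take $|\mathcal N|\le(6/\eps)^{2(1-\delta')N'}$. For each fixed $y\in\mathcal N$, Chernoff's bound with parameter $\lambda=1/\eps$ gives
\[
\PP(\|h-y\|_1\le2\eps N')\le e^{2\lambda\eps N'}\prod_j\E e^{-\lambda|h_j-y_j|}\le\bigl(e^2\cdot C'\eps^2\bigr)^{N'},
\]
because a density bounded by $C$ in the plane gives $\E e^{-\lambda|h_j-y_j|}\le C'\lambda^{-2}$. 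The union bound then yields the estimate $(C''\eps^{2\delta'})^{N'}$. This is at most $e^{-N'}$ for $\eps$ small depending on $\delta$, and therefore $\E F(h)\ge\eps N'/2\gtrsim c(\delta)N$.

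The crucial point, and the step I expect to be the main obstacle, is matching real dimensions. The atom-free two-dimensional density makes each coordinate's small-ball probability of order $\eps^2$, which pairs exactly with the net exponent $\eps^{-2}$ per complex dimension. A one-dimensional circle-valued $z_j$ would give only $\eps$ per coordinate, and the count would fail. This is why Step 1 must replace $z$ by a vector with a genuine planar density, and why the atom at $0$ must be made negligible through the choice of $M$ in Step 2.
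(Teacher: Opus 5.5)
Your proof is correct, and it takes a genuinely different route from the paper. The paper conditions on the quadrant signs $s_k':=\sign\operatorname{Re}z_k$, $s_k'':=\sign\operatorname{Im}z_k$, which form $2N$ independent Rademacher variables. It then uses $\E(z_k\mid s)=\frac{2}{\pi}(s_k'+is_k'')$ and Jensen's inequality to get $\E\|z-w\|_1\gtrsim\E\dist_{\ell_1^{2N}}(s,\widetilde V)$, where $\widetilde V\subset\R^{2N}$ is the realification of $V$, of real dimension $2\dim_{\C}V$. Finally it quotes the $\ell_1$-rigidity of Rademacher vectors from \cite{Malykhin2024}. So the paper's argument is a three-line reduction that outsources the substance to a known result.

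Yours is self-contained. The contraction principle applies because $F$ is a convex seminorm and each $z_j$ is rotation invariant; it replaces $z$ by a vector with bounded planar density. Then the volumetric net of cardinality $(6/\eps)^{2\dim_{\C}V'}$ is beaten by the per-coordinate small-ball factor $C'\eps^2$ from the Laplace-transform bound, leaving $(C''\eps^{2\delta'})^{N'}$.

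Both proofs hinge on the same bookkeeping of real dimensions. In the paper it is $2N$ real signs against $\dim_{\R}\widetilde V=2\dim_{\C}V$; in yours it is $\eps^2$ per coordinate against $\eps^{-2}$ per complex dimension. Your remark that the circle law alone gives only $\eps$ per coordinate correctly identifies why the smoothing step is needed. In effect you reprove, directly in the complex setting, the kind of rigidity statement the paper imports. As by-products you get an exponential small-ball bound for the smoothed vector and an explicit constant, though it is exponentially small in $1/\delta$.

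One simplification: take $a_j:=\sqrt{U_j}$ with $U_j$ uniform on $[0,1]$. Then $h_j=a_jz_j$ is exactly uniform on the unit disc, with density $1/\pi$ and no atom at $0$, so Step~2 can be dropped altogether.
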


\begin{proof}
Suppose that $z$ is approximated by a random vector $w(\omega)\in V$.
Put
$s_k'=\sign(\operatorname{Re}z_k)$ and
$s_k''=\sign(\operatorname{Im}z_k)$.  The $2N$ variables
$s_1',s_1'',\ldots,s_N',s_N''$ are independent Rademacher variables; denote
$s=(s_1',s_1'',\ldots,s_N',s_N'')$.  We will use that
$\E(z_k\mid s)=\E(z_k\mid s_k',s_k'')=(2/\pi)(s_k'+is_k'')$.

Consider the conditional expectation
$$
    \E(\|z-w\|_1\;|\;s)
    = \sum_{k=1}^N \E(|z_k-w_k|\;|\;s)
    \ge \sum_{k=1}^N |\frac{2}{\pi}(s_k'+is_k'') - \E(w_k|s)|
    \gtrsim \sum_{k=1}^N(|s_k' - w_k'|+|s_k''-w_k''|),
$$
    where $w_k':=(\pi/2)\operatorname{Re}\E(w_k|s)$ and
    $w_k'':=(\pi/2)\operatorname{Im}\E(w_k|s)$.
    Note that the random vector $(w_1',w_1'',\ldots,w_N',w_N'')$ lies in the
    $2n$-dimensional real subspace
    $$
    \widetilde{V}=\{(\operatorname{Re}v_1,\operatorname{Im}v_1,\ldots,\operatorname{Re}v_N,\operatorname{Im}v_N)\colon
    v\in V\}.
    $$
    Therefore
\[
    \E\|z-w\|_1
    \gtrsim \E\dist_{\ell_1^{2N}}(s,\widetilde{V})
    \gtrsim_\delta N,
\]
where the last estimate is the usual rigidity of independent Rademacher
variables; see~\cite[Corollary~3.1]{Malykhin2024}.
\end{proof}

All necessary facts on complex measures and analysis on locally-compact abelian groups may be
found in classical books of W.~Rudin:~\cite[Chapter 6]{RudinRealComplex},~\cite{RudinGroups}.

We prove Theorem~\ref{thm:sidon-rigidity}.
\begin{proof}
Fix a complex subspace $V\subset\C^N$ and put
$$
\rho(u):=\dist_{\ell_1^N}(u,V).
$$
In view of~\eqref{eq:sidon-transposition}, our goal is to give a lower bound on the average distance
$$
\int_G \rho(\Gamma(x))\,dm_G(x),
\quad \mbox{where }\Gamma(x):=(\gamma_1(x),\dots,\gamma_N(x))
$$
and $m_G$ being the Haar measure (normalized by $m_G(G)=1$).

Now fix $b=(b_1,\dots,b_N)$ with $|b_j|\le1$.  On
$E=\operatorname{span}\{\gamma_1,\dots,\gamma_N\}\subset C(G)$ define
\[
    L_b\!\left(\sum_{j=1}^N a_j\gamma_j\right):=\sum_{j=1}^N a_jb_j.
\]
By \eqref{eq:intro-sidon}, $\|L_b\|\le S$.  Hahn--Banach and the Riesz
representation theorem give a complex Borel measure $\nu_b$ on $G$ such
that
\begin{equation}\label{eq:sidon-interpolation-measure}
 \int_G\gamma_j(h)\,d\nu_b(h)=b_j,
 \quad j=1,\dots,N,
\end{equation}
and the total variation $|\nu_b|(G)\le S$.
Consequently, for every $x\in G$,
\begin{equation}\label{eq:sidon-vector-interpolation}
    \int_G \Gamma(x+h)\,d\nu_b(h)
    = \Gamma(x)\circ b = (b_1\gamma_1(x),\ldots,b_N\gamma_N(x)).
\end{equation}

Complex measure $d\nu_b$ admits a polar decomposition: $d\nu_b = \phi\, d|\nu_b|$,
where $|\phi(x)|\equiv 1$.
We use~\eqref{eq:sidon-vector-interpolation} and Jensen's inequality for the
convex function $\rho$ and the measure $|\nu_b|$:
\begin{multline*}
\rho(\Gamma(x)\circ b)
= \rho\left(\int_G \Gamma(x+h)\,d\nu_b(h)\right) 
= \rho\left(\int_G \phi(h)\Gamma(x+h) \,d|\nu_b|(h)\right) \le \\
\le \int_G \rho(\phi(h)\Gamma(x+h))\,d|\nu_b|(h)
= \int_G \rho(\Gamma(x+h))\,d|\nu_b|(h).
\end{multline*}

Integrating over $x\in G$, using Haar invariance and
$|\nu_b|(G)\le S$, we obtain
\begin{equation}\label{eq:sidon-quotient-comparison}
    \int_G \rho\bigl(\Gamma(x)\circ b\bigr)\,dm_G(x)
    \le \int_G\int_G \rho(\Gamma(x+h))\,dm_G(x)\,d|\nu_b|(h)
    \le S\int_G \rho(\Gamma(x))\,dm_G(x).
\end{equation}

Average \eqref{eq:sidon-quotient-comparison} over independent uniform
phases $b=z$.  For fixed $x$, the vector $\Gamma(x)\circ z$ has the same
distribution as $z$.  Hence, whenever
    $\dim_{\mathbb C}V\le(1-\delta)N$, Lemma~\ref{lem_steinhaus} gives
\[
    \int_G \rho(\Gamma(x))\,dm_G(x)
 \ge\frac{c(\delta)}{S}N.
\]
Recall that $\rho$ is the $\ell_1^N$-distance to $V$.
Taking the infimum over $V$ and using \eqref{eq:sidon-transposition},
\[
 d_n^{\avg,\mathbb C}(\{\gamma_1,\dots,\gamma_N\},L_1(G))
 \ge\frac{c(\delta)}{S}.
\]
Since the usual Kolmogorov width dominates the average width, theorem is proved.
\end{proof}

\begin{proof}[Proof of Corollary~\ref{cor:lacunary-sidon}]
The classical theorem on Hadamard-lacunary sets says that, for every fixed
$q>1$, a sequence satisfying $k_{j+1}/k_j\ge q$ is a Sidon subset of
$\widehat\T=\Z$ with Sidon constant depending only on $q$; see
\cite{RudinGroups}.  Apply Theorem~\ref{thm:sidon-rigidity}.
\end{proof}

\section{Fourier matrices approximation}
\label{sec_fourier}

In this section we will prove Theorem~\ref{thm:main-rank}. For the proof it
would be convenient to normalize the row $\ell_p$-norms.
For an $N\times N$ matrix we denote
$$
\Delta_p(A) := N^{-1/p}\|A\|_{p,\infty} = \max_{1\le i\le N} \left(\frac1N\sum_{j=1}^N
|A_{i,j}|^p\right)^{1/p},
$$
with the usual convention that $\Delta_\infty(A)=\max_{i,j}|A_{i,j}|$.
Thus Theorem~\ref{thm:main-rank} is equivalent to constructing a matrix $B_G$
of rank at most $N^{1-a}$ such
that 
$$
\Delta_1(\mathcal{F}_G-B_G) \le N^{-1/2+\eta}.
$$

We use the following property.
If $G=G_1\times\cdots\times G_d$, then, up to row and column permutations,
\begin{equation}\label{eq:intro-tensor-Fourier}
    \mathcal{F}_G=\mathcal{F}_{G_1}\otimes\cdots\otimes \mathcal{F}_{G_d},
\end{equation}
the tensor (Kronecker) product.
Throughout this section we use its basic properties:
\begin{equation}
 \label{eq:Delta-multiplicative}
 \Delta_1(A\otimes B)=\Delta_1(A)\Delta_1(B),
 \qquad
 \rank(A\otimes B)=\rank A\,\rank B.
\end{equation}

Let us briefly introduce the ideas of the proof. First of all,
\begin{equation}
 \label{fourier_4}
    \mathcal{F}_G^4=N^2\mathrm{Id}.
\end{equation}
Therefore, $\mathcal{F}_G$ can have only 4 distinct
eigenvalues: $\pm N^{1/2}$, $\pm iN^{1/2}$. Let $\lambda$ be the eigenvalue with
maximal multiplicity. Define $B:=\mathcal{F}_G-\lambda\,\mathrm{Id}$. Then $\rank B\le
3N/4$ and $\Delta_1(\mathcal{F}_G-B)=N^{-1/2}$. To go further we have to decrease the
rank. Suppose that $N=q_1\cdots q_d$ where $q_i$ are distinct primes. Write the expansions
$\mathcal{F}_{q_i}=B_i+E_i$. Then the matrix $B_1\otimes\cdots\otimes B_d$ has rank
at most $N(3/4)^d$ and the approximation error may be controlled. Finally, we
pass to arbitrary $N$. This was the
approach taken by A.~Sinai (personal communication). Our proof follows the same
main ideas but is more involved.

We remark that the properties of tensor products with respect to approximation in
Hamming metric were thoroughly studied by specialists in Matrix Rigidity,
see~\cite{DvirLiu2020,Alman21,Kivva}.

\subsection{General tensor-product estimates}

We start we some general results applicable to arbitrary matrices.
Let $H_i$ be square matrices of
order $N_i$, $i=1,\ldots,d$, such that
\begin{equation}
    \label{common_tensor}
    \Delta_1(H_i)\le1,
 \qquad
 H_i=B_i+E_i,
 \qquad
 \rank B_i\le\rho_iN_i,
 \qquad
    \Delta_1(E_i)\le\delta_i.
\end{equation}

We repeatedly use the following expansion.
Put
\[
 H:=\bigotimes_{i=1}^dH_i,
 \qquad
 N:=\prod_{i=1}^dN_i,
\]
and expand
\begin{equation}
 \label{common_expansion}
 H=\sum_{S\subset[d]}T_S,
 \qquad
 T_S:=\bigotimes_{i\in S}B_i\otimes
      \bigotimes_{j\notin S}E_j.
\end{equation}
For a family $\mathcal S\subset2^{[d]}$, let
$B_{\mathcal S}:=\sum_{S\in\mathcal S}T_S$.  Then
\begin{equation}
 \label{common_expansion_rank}
 \rank B_{\mathcal S}
 \le N\sum_{S\in\mathcal S}\prod_{i\in S}\rho_i,
\end{equation}
and
\begin{equation}
 \label{common_expansion_error}
    \Delta_1(H-B_{\mathcal S})
 \le
 \sum_{S\notin\mathcal S}
 \prod_{i\in S}(1+\delta_i)
 \prod_{j\notin S}\delta_j.
\end{equation}
The next two lemmas correspond to two useful choices of $\mathcal S$.

\begin{lemma}[Equal-gain tensor truncation]
\label{lem:tensor-truncation}
    Suppose that the relations~\eqref{common_tensor} hold with
    $\rho_1=\ldots=\rho_d=\rho\in(0,1)$ and $\delta_1=\ldots=\delta_d=\delta\in(0,1)$.
For every integer $0\le s<d$, there is a matrix $B$
of order $N$ such that
\begin{equation}\label{eq:equal-block-rank}
 \rank B\le N 2^d\rho^{d-s}
\end{equation}
and
\begin{equation}\label{eq:equal-block-error}
    \Delta_1(H-B) \le 4^d\delta^{s+1}.
\end{equation}
\end{lemma}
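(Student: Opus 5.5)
We take $B:=B_{\mathcal S}$ in the expansion \eqref{common_expansion}, with
\[
 \mathcal S:=\{S\subset[d]\colon |S|\ge d-s\}.
\]
So we keep every term $T_S$ with at most $s$ factors $E_j$, and discard the terms with at least $s+1$ error factors. Both estimates then follow from the general bounds \eqref{common_expansion_rank} and \eqref{common_expansion_error}; what remains is crude counting of subsets.

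For the rank, apply \eqref{common_expansion_rank} with $\rho_i=\rho$. Since $\rho<1$ and $|S|\ge d-s$ for every $S\in\mathcal S$,
\[
 \rank B\le N\sum_{|S|\ge d-s}\rho^{|S|}\le N\,\rho^{d-s}\,\#\mathcal S\le N2^d\rho^{d-s},
\]
which is \eqref{eq:equal-block-rank}.

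For the error, let $S\notin\mathcal S$, so $|S|\le d-s-1$ and the complement has $d-|S|\ge s+1$ elements. Then \eqref{common_expansion_error} gives
\[
 \Delta_1(H-B)\le\sum_{|S|<d-s}(1+\delta)^{|S|}\delta^{d-|S|}.
\]
Each term is bounded using two facts. First, $(1+\delta)^{|S|}\le 2^d$ because $\delta<1$. Second, $\delta^{d-|S|}\le\delta^{s+1}$ because $\delta<1$ and $d-|S|\ge s+1$. There are at most $2^d$ subsets, so $\Delta_1(H-B)\le 2^d\cdot 2^d\delta^{s+1}=4^d\delta^{s+1}$, which is \eqref{eq:equal-block-error}.

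No step here is a real obstacle: the substantive work is already contained in \eqref{common_expansion_rank} and \eqref{common_expansion_error}. If we wanted to justify those, the error bound is the step needing care. It uses the triangle inequality for $\Delta_1$, the multiplicativity \eqref{eq:Delta-multiplicative}, and $\Delta_1(B_i)\le\Delta_1(H_i)+\Delta_1(E_i)\le 1+\delta_i$.
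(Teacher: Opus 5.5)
Your proof is correct and follows essentially the same approach as the paper. You use the same truncation family $\mathcal S=\{S\colon |S|\ge d-s\}$ and the same crude counting in \eqref{common_expansion_rank} and \eqref{common_expansion_error}, arriving at $N2^d\rho^{d-s}$ for the rank and $2^d(1+\delta)^d\delta^{s+1}\le 4^d\delta^{s+1}$ for the error.
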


\begin{proof}
    Use the expansion~\eqref{common_expansion} and let $B$ be the
    sum of the terms containing
    at most $s$ factors $E$. This corresponds to the family
    $\mathcal{S}=\{S\subset\{1,\ldots,d\}\colon |S|\ge d-s\}$.

    Let us bound the rank in~\eqref{common_expansion_rank}:
\[
 \rank B \le N\sum_{k=d-s}^d\binom dk\rho^k
 \le N 2^d\rho^{d-s}.
\]
    The error bound~\eqref{common_expansion_error} simplifies to
\[
    \Delta_1(H-B)
    \le\sum_{k=0}^{d-s-1}\binom dk(1+\delta)^k\delta^{d-k}
    \le 2^d(1+\delta)^d\delta^{s+1}.
\]
\end{proof}

\begin{lemma}[Power gain]
\label{lem:polynomial-tensor-closure}
Fix $a,b>0$, $0<\sigma<1$, and $\zeta>0$.  There is
$L=L(a,b,\sigma,\zeta)\ge2$ with the following property.  Suppose
$H_i=B_i+E_i$ are square matrices of order $N_i\ge L$ and
\[
    \Delta_1(H_i)\le1,
 \qquad
 \rank B_i\le N_i^{1-a},
 \qquad
    \Delta_1(E_i)\le N_i^{-b}.
\]
For $H:=\bigotimes_iH_i$ and $N:=\prod_iN_i$, there is a matrix $B$ such
that
\[
 \rank B\le N^{1-a\sigma/2},
 \qquad
    \Delta_1(H-B)\le N^{-b(1-\sigma)+\zeta}.
\]
\end{lemma}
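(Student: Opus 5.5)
The plan is to use the expansion \eqref{common_expansion} with a family $\mathcal S$ chosen by the \emph{size} of the blocks, not their number. Since the orders $N_i$ may differ, I keep a term $T_S$ when the factors $B_i$, $i\in S$, cover a large portion of $N$ multiplicatively. Fix $\sigma':=3\sigma/4$ and set
\[
 \mathcal S:=\Bigl\{S\subset[d]\colon \prod_{i\in S}N_i\ge N^{\sigma'}\Bigr\},
 \qquad B:=B_{\mathcal S}.
\]
The relations \eqref{common_tensor} hold with $\rho_i=N_i^{-a}$ and $\delta_i=N_i^{-b}\le1$. So \eqref{common_expansion_rank} and \eqref{common_expansion_error} apply directly, and the remaining work is bookkeeping of the combinatorial factors.

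\textbf{Rank.} For $S\in\mathcal S$ we have $\prod_{i\in S}\rho_i=\bigl(\prod_{i\in S}N_i\bigr)^{-a}\le N^{-a\sigma'}$. There are at most $2^d$ such sets, so \eqref{common_expansion_rank} gives
\[
 \rank B\le 2^dN^{1-a\sigma'}.
\]

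\textbf{Error.} For $S\notin\mathcal S$ the complementary product satisfies $\prod_{j\notin S}N_j>N^{1-\sigma'}$, hence $\prod_{j\notin S}\delta_j<N^{-b(1-\sigma')}$. Also $\prod_{i\in S}(1+\delta_i)\le2^d$. Summing over at most $2^d$ sets, \eqref{common_expansion_error} gives
\[
 \Delta_1(H-B)\le4^dN^{-b(1-\sigma')}.
\]

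\textbf{Absorbing the factors $2^d$ and $4^d$.} This is the only real point of the argument, and it is where the hypothesis $N_i\ge L$ enters. From $N\ge L^d$ we get $d\le\log N/\log L$, hence
\[
 2^d\le N^{\log2/\log L}.
\]
Choose $L$ so large that $\log 4/\log L\le\min\{a\sigma/4,\ \zeta\}$. Then:
\begin{itemize}
 \item $\rank B\le N^{1-3a\sigma/4+a\sigma/4}=N^{1-a\sigma/2}$;
 \item $\Delta_1(H-B)\le N^{\zeta}N^{-b(1-3\sigma/4)}\le N^{-b(1-\sigma)+\zeta}$.
\end{itemize}

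The main obstacle I anticipate is exactly this uniformity in $d$. A naive count by the number of factors, as in Lemma~\ref{lem:tensor-truncation}, fails when the $N_i$ are very unequal. The multiplicative threshold removes the dependence on the individual sizes. The exponential-in-$d$ losses are then harmless only because every factor has order at least $L$, so $d$ is logarithmic in $N$ with a small constant. The slack between $\sigma'=3\sigma/4$ and the targets $\sigma/2$ (for the rank) and $\sigma$ (for the error) is what pays for these losses.
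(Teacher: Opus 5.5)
Your proof is correct and is essentially the paper's argument: keep the terms $T_S$ with $\prod_{i\in S}N_i$ above a power of $N$, and absorb the $2^d$ and $4^d$ losses via $d\le\log N/\log L$. The only difference is your threshold $N^{3\sigma/4}$ in place of the paper's $N^{\sigma}$. This is harmless but unnecessary: the error loss is already paid by $\zeta$, and the rank loss by the gap between $a\sigma$ and $a\sigma/2$.
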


\begin{proof}
    We will use that $N\ge L^d$, so $d\le\log N/\log L$ and $2^d\le N^{\log
    2/\log L}$.

For $S\subset[d]$ put $N_S:=\prod_{i\in S}N_i$ and retain in $B$ the terms $T_S$
    with $N_S\ge N^\sigma$. This corresponds to $\mathcal{S}=\{S\colon
    N_S\ge N^\sigma\}$. For every retained term,
\[
 \rank T_S\le NN_S^{-a}\le N^{1-a\sigma}.
\]
The sum of their ranks is at most
\[
 2^dN^{1-a\sigma}
 \le N^{1-a\sigma+\log 2/\log L}.
\]
Choose $L$ so large that $\log 2/\log L\le a\sigma/2$.

For any discarded term, $N_S<N^\sigma$, hence $N/N_S>N^{1-\sigma}$ and
\[
    \Delta_1(T_S)
 \le
 N^{-b(1-\sigma)}
 \prod_{i\in S}(1+N_i^{-b})
 \le N^{-b(1-\sigma)}e^{dL^{-b}}
    \le N^{-b(1-\sigma) + L^{-b}/\log L}.
\]
Summing over at most $2^d$ subsets gives
\[
    \Delta_1(H-B)
 \le
 N^{-b(1-\sigma)+(\log 2+L^{-b})/\log L}.
\]
Increase $L$ so that the last positive exponent is at most $\zeta$.
\end{proof}

\subsection{Spectral decomposition}

We use the refined version of~\eqref{fourier_4}. Let $R_G$ denote the permutation matrix of the involution $x\mapsto-x$ on
$G$. A direct computation gives
\begin{equation}
 \label{eq:F-square}
 \mathcal{F}_G^2=|G|R_G.
\end{equation}

\begin{lemma}[Spectral decomposition]
\label{lem:local-spectral}
Let $G$ be a finite abelian group of order $N$.  There is
a decomposition
\[
 \mathcal{F}_G=A_G+E_G
\]
such that
\[
 \rank A_G\le\frac N2,
 \qquad
 \Delta_1(E_G)\le(N/2)^{-1/2}.
\]
Moreover, $\mathcal{F}_G,A_G,E_G$ have a common orthonormal eigenbasis, and on each
vector $v$ of this basis exactly one of the following alternatives holds:
\begin{equation}
 \label{eq:local-dichotomy}
 A_Gv=0,\quad \mathcal{F}_Gv=E_Gv,
 \qquad\text{or}\qquad
 A_Gv=-2E_Gv,\quad \mathcal{F}_Gv=-E_Gv.
\end{equation}
\end{lemma}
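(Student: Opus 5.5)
The plan is to build $A_G$ and $E_G$ directly from the spectral projections of $\mathcal{F}_G$, choosing signs so that $E_G$ becomes a multiple of a very sparse matrix.

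\textbf{Eigenspaces.} Since $N^{-1/2}\mathcal{F}_G$ is unitary, $\mathcal{F}_G$ is normal. By \eqref{fourier_4} its eigenvalues lie in $\{\pm N^{1/2},\pm iN^{1/2}\}$. Let $P_1,P_2,P_3,P_4$ be the orthogonal spectral projections for $N^{1/2},-N^{1/2},iN^{1/2},-iN^{1/2}$, with ranks $p_1,\dots,p_4$. Then
\[
\mathcal{F}_G=N^{1/2}(P_1-P_2+iP_3-iP_4).
\]
By \eqref{eq:F-square}, $\mathcal{F}_G$ commutes with $R_G$, and $R_G=N^{-1}\mathcal{F}_G^2=P_1+P_2-P_3-P_4$. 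Hence the even and odd projections are
\[
\Pi_+:=(\Id+R_G)/2=P_1+P_2,\qquad \Pi_-:=(\Id-R_G)/2=P_3+P_4.
\]

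\textbf{Definition of $A_G,E_G$.} Choose $j\in\{1,2\}$ with $p_j\le p_{3-j}$, and $k\in\{3,4\}$ with $p_k\le p_{7-k}$. Put $P:=P_j+P_k$ and define
\[
A_G:=2\mathcal{F}_G P,\qquad E_G:=\mathcal{F}_G(\Id-2P).
\]
Then $A_G+E_G=\mathcal{F}_G$, and $\rank A_G=p_j+p_k\le (p_1+p_2)/2+(p_3+p_4)/2=N/2$. All three matrices are polynomials in the spectral projections, so any orthonormal eigenbasis of $\mathcal{F}_G$ adapted to $P_1,\dots,P_4$ is a common eigenbasis. Take a basis vector $v$. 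If $Pv=0$, then $A_Gv=0$ and $E_Gv=\mathcal{F}_Gv$. If $Pv=v$, then $E_Gv=-\mathcal{F}_Gv$ and $A_Gv=2\mathcal{F}_Gv=-2E_Gv$. This is exactly \eqref{eq:local-dichotomy}.

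\textbf{Error bound.} This is the key point, since we need more than the trivial $\Delta_2(E_G)=1$. Flipping the sign on the smaller of $P_1,P_2$ makes $E_G$ act on the even part as $\epsilon N^{1/2}\Pi_+$ for some $\epsilon\in\{\pm1\}$. Similarly, $E_G$ acts on the odd part as $\epsilon' iN^{1/2}\Pi_-$ for some $\epsilon'\in\{\pm1\}$. Thus
\[
E_G=N^{1/2}\Bigl(\epsilon\,\tfrac{\Id+R_G}{2}+i\epsilon'\,\tfrac{\Id-R_G}{2}\Bigr).
\]
Since $R_G$ is the permutation $x\mapsto -x$, each row of $E_G$ is supported on $\{x,-x\}$. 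If $x\neq -x$, the two entries have modulus $N^{1/2}|\epsilon\pm i\epsilon'|/2=N^{1/2}/\sqrt2$. If $x=-x$, there is a single entry of modulus $N^{1/2}$. In either case the row $\ell_1$-norm is at most $\sqrt2\,N^{1/2}$. Therefore
\[
\Delta_1(E_G)\le N^{-1}\sqrt2\,N^{1/2}=(N/2)^{-1/2}.
\]

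The only non-routine step is realizing that the sign choice should make $E_G$ a combination of $\Id$ and $R_G$ rather than a generic unitary multiple; the rest is bookkeeping.
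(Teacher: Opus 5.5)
Your proposal is correct and is essentially the paper's proof. Your $E_G=N^{1/2}\bigl(\epsilon\,\tfrac{\Id+R_G}{2}+i\epsilon'\,\tfrac{\Id-R_G}{2}\bigr)$ is exactly the paper's $a\Id+bR_G$, where $\lambda_\pm$ is the eigenvalue of maximal multiplicity on the even/odd subspace, and your $A_G=2\mathcal{F}_GP$ is the paper's $\mathcal{F}_G-E_G$; you simply obtain this by flipping the sign of $\mathcal{F}_G$ on the smaller spectral projections instead of writing down $a\Id+bR_G$ first.
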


\begin{proof}
Put $F:=\mathcal{F}_G$ and $R:=R_G$ from~\eqref{eq:F-square}. By the definition, $R$ and $F$ commute
and are normal. On
\[
 H_+:=\ker(R-\Id),
 \qquad
 H_-:=\ker(R+\Id),
\]
the eigenvalues of $F$ belong respectively to
$\{\pm\sqrt N\}$ and $\{\pm i\sqrt N\}$.  Choose an eigenvalue
$\lambda_+$ of maximal multiplicity on $H_+$ and an eigenvalue $\lambda_-$
of maximal multiplicity on $H_-$.  If $H_-=\{0\}$, choose either of the two
numbers $\pm i\sqrt N$.  Define
\[
 a:=\frac{\lambda_++\lambda_-}{2},
 \qquad
 b:=\frac{\lambda_+-\lambda_-}{2},
 \qquad
 E_G:=a\Id+bR,
 \qquad
 A_G:=F-E_G.
\]
Let us check that the matrix $A_G$ vanishes on the selected eigenspaces.
    Indeed, if $v\in H_+$, $Fv=\lambda_+v$, then
    $A_Gv=Fv-av-bRv=\lambda_+v-(\lambda_++\lambda_-)/2\,v-(\lambda_+-\lambda_-)/2\,v=0$;
    the other case is analogous.
So, $A_G=0$ on subspaces of total dimension at least $N/2$,
and $\rank A_G\le N/2$. 

Note that $|a|=|b|=\sqrt{N/2}$.  Every row of $E_G$ has
at most two nonzero entries; if the two positions coincide, the resulting
entry has modulus $\sqrt N$. Hence
\[
 \Delta_1(E_G) \le \frac{2(N/2)^{1/2}}{N} =(N/2)^{-1/2}.
\]

On the selected eigenspace in each $H_\pm$ one has $A_G=0$, hence $F=E_G$.
On the complementary eigenspace, the eigenvalue of $F$ is the negative of
the selected one, whereas the eigenvalue of $E_G$ is unchanged.  Thus
$F=-E_G$ and $A_G=-2E_G$ there.
\end{proof}

This lemma immediately gives the optimal approximation error $N^{-1/2}$, but with
rank $N/2$. In order to reduce the rank we will use tensor products.

\subsection{Bounded Fourier products}

We will use the following property of tensor products. If
$\{v^{(i)}_1,\ldots,v^{(i)}_{N_i}\}$, $i=1,\ldots,d$, are orthonormal eigenbases
for matrices $H_1,\ldots,H_d$, respectively, then the set of vectors
\begin{equation}
\label{tensor_eigenbasis}
v = v^{(1)}_{j_1}\otimes \cdots \otimes v^{(d)}_{j_d},
\qquad 1\le j_i\le N_i,\quad i=1,\ldots,d,
\end{equation}
is an orthonormal eigenbasis for $H = H_1\otimes\cdots\otimes H_d$ and the
eigenvalues equal the product of the corresponding eigenvalues:
$$
Hv = \lambda^{(1)}_{j_1}\cdots\lambda^{(d)}_{j_d}v,
\quad H_iv^{(i)}_{j_i} = \lambda^{(i)}_{j_i}.
$$

\begin{lemma}[Bounded products]
\label{lem:balanced-products}
Suppose
\[
 G=G_1\times\cdots\times G_d,
 \qquad
 N:=|G|,
 \qquad
 N_{\max}:=\max_{1\le i\le d}|G_i|,
\]
where $|G_i|\ge2$.  Let $1\le k\le d/2$.  Then there is a matrix $B_G$ such
that
\begin{equation}
 \label{eq:balanced-general-rank}
 \rank B_G\le N\cdot 2\exp\left(-c\frac{k^2}{d}\right)
\end{equation}
and
\begin{equation}
 \label{eq:balanced-general-error}
 \Delta_1(\mathcal{F}_G-B_G)
 \le
 N^{-1/2}\,2^d
 \left(\frac{C d^2}{k^2}\right)^k
    (N_{\max})^{k/2}.
\end{equation}
\end{lemma}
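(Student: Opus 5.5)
The plan is to build $B_G$ as $\mathcal{F}_G$ minus a low-degree part of the tensor expansion of Lemma~\ref{lem:local-spectral}. The coefficients of that part are chosen via a polynomial-interpolation identity on the common eigenbasis.

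\emph{Setup.} For each $i$ apply Lemma~\ref{lem:local-spectral} to $G_i$, getting $\mathcal{F}_{G_i}=A_i+E_i$ with $\Delta_1(E_i)\le (N_i/2)^{-1/2}$ and hence $\Delta_1(A_i)\le 1+\Delta_1(E_i)\le 2$. Take the product eigenbasis \eqref{tensor_eigenbasis}. For a basis vector $v$, let $T(v)\subset[d]$ be the set of factors on which the second alternative of \eqref{eq:local-dichotomy} holds, and put $t(v):=|T(v)|$. Let $E:=\bigotimes_i E_i$ and $T_S:=\bigotimes_{i\in S}A_i\otimes\bigotimes_{j\notin S}E_j$. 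Then \eqref{eq:local-dichotomy} gives
\[
\mathcal{F}_Gv=(-1)^{t(v)}Ev,\qquad T_Sv=\mathbf 1[S\subset T(v)]\,(-2)^{|S|}Ev .
\]
Hence for any coefficients $c_0,\dots,c_k$ the matrix $M:=\sum_{|S|\le k}c_{|S|}T_S$ acts on $v$ as $p(t(v))E$, where $p(t)=\sum_{j\le k}c_j(-2)^j\binom tj$. Conversely, every polynomial $p$ of degree at most $k$ arises in this way.

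\emph{Construction and rank.} Let $\mu$ be the mean of $t(v)$ over the basis. Each factor contributes an independent indicator with mean at most $1/2$, since the second alternative occupies at most half of each $G_i$-eigenbasis (this is how $\lambda_\pm$ were chosen). Let $W$ be a window of $k+1$ consecutive integers around $\mu$. Take $p$ to be the degree-$k$ polynomial interpolating $(-1)^t$ on $W$, and set $B_G:=\mathcal{F}_G-M$. Then $B_Gv=\bigl((-1)^{t(v)}-p(t(v))\bigr)Ev$ vanishes whenever $t(v)\in W$. So $\rank B_G$ is at most the number of basis vectors with $|t(v)-\mu|\gtrsim k/2$. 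Viewing $t$ as a sum of $d$ independent Bernoulli variables under the uniform measure on the product basis, Hoeffding bounds this count by $N\cdot 2\exp(-ck^2/d)$, which is \eqref{eq:balanced-general-rank}. The hypothesis $k\le d/2$ keeps the window inside $[0,d]$.

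\emph{Error.} Since $\mathcal{F}_G-B_G=M$, multiplicativity \eqref{eq:Delta-multiplicative} gives
\[
\Delta_1(T_S)\le 2^{|S|}\prod_{j\notin S}(2/N_j)^{1/2}\le 2^{d}N^{-1/2}N_{\max}^{|S|/2}.
\]
Therefore
\[
\Delta_1(M)\le 2^dN^{-1/2}N_{\max}^{k/2}\sum_{j\le k}|c_j|\binom dj .
\]
It remains to show that $\sum_j |c_j|2^{-j}\cdot 2^j\binom dj\le (Cd^2/k^2)^k$, up to a factor $2^{O(d)}$ that can be absorbed.

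\emph{Main obstacle.} I expect the hardest step to be bounding the coefficients $c_j$ of the interpolating polynomial in the binomial basis $\binom tj$, when the nodes sit near $\mu\approx d/2$ rather than at $0$. My first attempt is to write $p$ in Newton form on the shifted nodes, $p(t)=\sum_m \Delta^m[(-1)^\cdot](w_0)\binom{t-w_0}{m}$, where the finite differences have size $2^m$. Then I would expand $\binom{t-w_0}{m}$ in the basis $\binom tj$ by Vandermonde, $\binom{t-w_0}{m}=\sum_j\binom tj\binom{-w_0}{m-j}$, and bound $|\binom{-w_0}{m-j}|\le (Cd/(m-j))^{m-j}$. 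Combined with $\binom dj\le (ed/j)^j$, this should give a total of order $(Cd/k)^{k}\cdot(Cd/k)^k=(Cd^2/k^2)^k$, times $2^{O(k)}$ factors absorbed into $C$. If that bookkeeping loses too much, the fallback is Chebyshev-type interpolation nodes spread over the window.
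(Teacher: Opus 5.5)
Your proposal is correct and is essentially the paper's proof: the paper also annihilates the eigenvectors with $n(v)$ in a window of consecutive integers. It does this with the Newton-form polynomial $Q(x)=(-1)^{a+1}\sum_{j}(-2)^j\binom{x-a}{j}$ (your finite-difference expansion of $(-1)^t$), converts it to the basis $\binom{x}{m}$ by Vandermonde's convolution using $|\binom{-a}{r}|=\binom{a+r-1}{r}\le\binom{d}{r}$, and gets the rank bound from Hoeffding. Your bookkeeping closes directly with $\sum_j|c_j|\binom{d}{j}\le(Cd^2/k^2)^k$, so no extra $2^{O(d)}$ factor arises; and if the window would dip below $0$ you simply shift it as the paper does, which leaves Hoeffding unaffected.
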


\begin{proof}
    For every $i$, take $\mathcal{F}_{G_i}=A_i+E_i$ from
Lemma~\ref{lem:local-spectral} and put
$\delta_i:=(|G_i|/2)^{-1/2}$.
    Thus $\Delta_1(E_i)\le \delta_i$ and $\Delta_1(A_i)\le 1+\delta_i$.
    Consider the expansion~\eqref{common_expansion} and define
\[
 C^{(m)}:=
 \sum_{|S|=m} T_S
 = \sum_{|S|=m}
 \bigotimes_{i\in S}A_i\otimes
 \bigotimes_{j\notin S}E_j.
\]
Thus $\mathcal{F}_G=\sum_{m=0}^dC^{(m)}$.

    For each $i$ Lemma~\ref{lem:local-spectral} gives a common orthonormal eigenbasis of
    $A_i$ and $E_i$. Consider the tensor-product eigenbasis~\eqref{tensor_eigenbasis}.
    For vectors of this basis we denote
\[
    \mathrm{NZ}(v):=\{i:A_i v^{(i)}_{j_i}\ne0\},
 \qquad
 n(v):=|\mathrm{NZ}(v)|,
\]
and let $e(v)$ be the product of the corresponding eigenvalues of
$E_1,\ldots,E_d$.
Then~\eqref{eq:local-dichotomy} gives
\begin{equation}
 \label{eq:balanced-layer-action}
 \mathcal{F}_Gv=e(v)(-1)^{n(v)}v,
 \qquad
 C^{(m)}v=e(v)(-2)^m\binom{n(v)}m v
\end{equation}
(indeed, the sum in $C^{(m)}v$ is over $m$-sets $S\subset\mathrm{NZ}(v)$).

    For a randomly (and uniformly) chosen tensor basis
    vector~\eqref{tensor_eigenbasis}, the quantity $n(v)$ is a sum of independent
    Bernoulli variables with means at most $1/2$. Let $\mu:=\E n(v)\le d/2$.
    Choose an interval $I=\{a,a+1,\ldots,a+k-1\}$ of $k$
    consecutive integers centered at $\mu$. We ensure that $a\ge 0$ by
    shifting $I$ if necessary. Hoeffding's inequality gives
\[
 \PP\{n(v)\notin I\}
 \le2\exp\left(-c\frac{k^2}{d}\right).
\]
    If $I=\{a,\ldots,a+k-1\}$, then the polynomial
\[
 Q(x):=(-1)^{a+1}\sum_{j=0}^{k-1}(-2)^j\binom{x-a}{j}
\]
satisfies $Q(n)=(-1)^{n+1}$ on $I$.
Write
$$
 Q(x)=\sum_{m=0}^{k-1}h_m(-2)^m\binom xm
$$
with some coefficients $h_0,\ldots,h_{k-1}\in\mathbb{R}$.
Define the approximation matrix as
\[
 B_G:=\mathcal{F}_G+\sum_{m=0}^{k-1}h_mC^{(m)}.
\]
It vanishes on every basis vector with $n(v)\in I$ due to~\eqref{eq:balanced-layer-action}.  Hence
\[
 \rank B_G
 \le N\,\PP\{n(v)\notin I\}
 \le N\cdot 2\exp\left(-c\frac{k^2}{d}\right).
\]

It remains to bound the approximation error.
We have
$$
\Delta_1(\mathcal{F}_G-B_G) \le k \max_{m<k}|h_m| \cdot \max_{m<k}\Delta_1(C^{(m)}).
$$

First, we bound the coefficients $h_m$.
Vandermonde's convolution identity (see, e.g.,~\cite[Sec.~5.1]{Concrete}) gives
\[
 \binom{x-a}{j}
 =\sum_{m=0}^j\binom xm\binom{-a}{j-m}.
\]
(Note the convention $\binom{x}{j}=0$ for integer $j<0$.)
Consequently,
$$
 h_m=\frac{(-1)^{a+1}}{(-2)^m}
 \sum_{j=0}^{k-1}(-2)^j\binom{-a}{j-m}
 = (-1)^{a+1} \sum_{r=0}^{k-1-m} (-2)^r \binom{-a}{r}.
$$
We have
\[
 \left|\binom{-a}{r}\right|
 =\binom{a+r-1}{r}
 \le\binom dr,
\]
because $r<k$ and $a+k-1\le d$.  Since $k\le d/2$,
\[
 |h_m|
 \le\sum_{r=0}^{k-1} 2^r\binom dr
 \le 2^k (ed/k)^k = (2ed/k)^k.
\]

Second, we bound the quantities $\Delta_1(C^{(m)})$.
Put $u_i:=(1+\delta_i)/\delta_i$,
$$
u_i = 1 + \delta_i^{-1} \le 1 + (|G_i|/2)^{1/2} \le 2(N_{\max})^{1/2}.
$$
We have
$$
\Delta_1(C^{(m)})
\le \sum_{|S|=m} \prod_{i\in S}(1+\delta_i) \cdot \prod_{j\not\in S}\delta_j
= \prod_{i=1}^d \delta_i \cdot \sum_{|S|=m} \prod_{i\in S} u_i
= \prod_{i=1}^d \delta_i \cdot \sigma_m(u_1,\ldots,u_d),
$$
where $\sigma_m$ is the elementary symmetric polynomial of degree $m$. Note that
$$
m!\sigma_m(x_1,\ldots,x_d) \le (\sum_{i=1}^d x_i)^m,
\quad x_1,\ldots,x_d\ge 0,
$$
since every square-free monomial of degree $m$ occurs $m!$ times in the right
side of the inequality. Using that $m!\ge(m/e)^m$, we obtain
$$
\sigma_m(u_1,\ldots,u_d)
\le \frac{1}{m!}(\sum_{i=1}^d u_i)^m
\le (e/m)^m (2d N_{\max}^{1/2})^m
= (2ed N_{\max}^{1/2} / m)^m
\le (2ed N_{\max}^{1/2} / k)^k.
$$
(For $m=0$ we have $\sigma_0=1$.)
By the definition of $\delta_i$, we have
$$
\prod_{i=1}^d \delta_i \le 2^d N^{-1/2}.
$$

We put all the inequalities altogether and obtain
$$
\Delta_1(\mathcal{F}_G-B_G)
\le k(2ed/k)^k \cdot 2^d N^{-1/2} (2ed N_{\max}^{1/2}/k)^k
\le N^{-1/2} 2^d (C d^2/k^2)^k (N_{\max})^{k/2}.
$$

\end{proof}

\begin{lemma}[Balanced products]
\label{lem:fixed-balanced}
For every $\eta>0$ there are $L_0=L_0(\eta)$ and
$d_0=d_0(\eta)$ such that,
for every fixed $L\ge L_0$ there is a constant $a=a(L,\eta)>0$ with the
following property. If
\[
 G=G_1\times\cdots\times G_d,
 \qquad
 L\le|G_i|\le L^2,
 \qquad
 d\ge d_0,
\]
then there is a matrix $B_G$ satisfying
\[
 \rank B_G\le N^{1-a},
 \qquad
    \Delta_1(\mathcal{F}_G-B_G)\le N^{-1/2+\eta},
    \qquad N:=|G|.
\]
\end{lemma}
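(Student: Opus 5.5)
Apply Lemma~\ref{lem:balanced-products} with $N_{\max}\le L^2$ and choose $k$ as a small constant multiple of $d$: set $k:=\lceil\theta d\rceil$, where $\theta\in(0,1/2]$ will be fixed depending on $\eta$ and $L$.

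\textbf{Error.} Since $d^2/k^2\le\theta^{-2}$, the error bound \eqref{eq:balanced-general-error} becomes
\[
 \Delta_1(\mathcal{F}_G-B_G)\le N^{-1/2}\,2^d\,(C\theta^{-2})^{\theta d+1}\,L^{\theta d+1}.
\]
Each factor $|G_i|\ge L$, so $d\le\log N/\log L$. Every factor of the form $X^d$ is therefore at most $N^{\log X/\log L}$. The extra $+1$ in the exponent contributes only a constant factor, which is absorbed once $d\ge d_0$, i.e.\ once $N$ is large. The error is thus at most $N^{-1/2+\kappa}$ with
\[
 \kappa\approx\frac{\log2+\theta\log(C\theta^{-2})}{\log L}+\theta .
\]

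\textbf{Choice of parameters.} First choose $\theta=\theta(\eta)$ so small that $\theta\le\eta/3$. Then choose $L_0(\eta)$ so large that
\[
 \frac{\log 2+\theta\log(C\theta^{-2})}{\log L_0}\le\frac{\eta}{3}.
\]
For every $L\ge L_0$ this gives $\kappa\le 2\eta/3$. The remaining $\eta/3$ absorbs the constant coming from the $+1$ in the exponent, for $d\ge d_0(\eta)$. Note that $\theta$ must not depend on $L$, since the $L^{\theta d}$ term contributes exactly $\theta$ to the exponent. Hence $\theta$ is fixed first, and only then is $L_0$ chosen.

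\textbf{Rank.} By \eqref{eq:balanced-general-rank} we have
\[
 \rank B_G\le 2N\exp(-c\,k^2/d)\le 2N\exp(-c\,\theta^2 d).
\]
Now use $|G_i|\le L^2$, which gives $d\ge\log N/(2\log L)$. Then
\[
 \exp(-c\theta^2d)\le N^{-c\theta^2/(2\log L)} .
\]
Put $a:=c\theta^2/(4\log L)$. The factor $2$ is at most $N^{a}$ once $N$ is large, and this holds for $d\ge d_0$ after possibly enlarging $d_0$, since $N\ge L^d\ge 2^d$. Hence $\rank B_G\le N^{1-a}$.

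\textbf{Admissibility and the main obstacle.} The condition $k\le d/2$ holds because $\theta\le1/2$ and $d\ge d_0$, and $k\ge1$ is automatic. The main obstacle is the order of quantifiers, which the argument above handles. The upper bound $|G_i|\le L^2$ is what turns the $\exp(-c\theta^2 d)$ gain into a genuine power saving $N^{-a}$. As a result, $a$ depends on $L$ while $\theta$ and $L_0$ depend only on $\eta$, exactly as the statement allows.
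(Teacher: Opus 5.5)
Your proposal is correct and follows the paper's proof. Like the paper, you apply Lemma~\ref{lem:balanced-products} with $k\approx\theta d$ for a small $\theta=\theta(\eta)$, use $d\le\log N/\log L$ to bound the error exponent by $\lesssim 1/\log L_0+\theta\log(1/\theta)/\log L_0+\theta$, and use $d\ge\log N/(2\log L)$ to get $a\asymp\theta^2/\log L$; your ceiling in place of the paper's floor is immaterial.

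The one thing to tighten is how you absorb the $L$-dependent constants, namely the extra factor $C\theta^{-2}L$ coming from the ceiling and the factor $2$ in the rank bound. Absorb them via $N\ge L^d$, for example $N^{a}\ge e^{c\theta^2 d/4}$ and $N^{\eta/3}\ge L^{d\eta/3}\ge L^2\ge C\theta^{-2}L$ for $d\ge 6/\eta$ and $L\ge C\theta^{-2}$, rather than via $N\ge 2^d$. This keeps $d_0$ depending only on $\eta$, at the cost of also requiring $L_0\ge C\theta^{-2}$.
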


\begin{proof}
In Lemma~\ref{lem:balanced-products}, take
    $k=\lfloor\theta d\rfloor$, where $\theta=\theta(\eta)$ is sufficiently small.
    The estimate~\eqref{eq:balanced-general-error} gives:
$$
    \frac{\log N^{1/2} \Delta_1(\mathcal{F}_G-B_G)}{\log N}
    \le \frac{d\log 2+ k\log(Cd^2/k^2) + k/2 \log L^2}{d\log L}
    \lesssim \frac{1}{\log L_0} + \frac{\theta\log(1/\theta)}{\log L_0} + \theta.
$$
Choose $\theta$ sufficiently small and $L_0$ sufficiently
    large so that this is less than $\eta$; this ensures the bound
    $N^{-1/2+\eta}$ for the error.

Finally, from rank estimate~\eqref{eq:balanced-general-rank} we obtain:
$\rank B_G\le N\cdot 2e^{-c\theta^2d} \le N^{1-a}$, for $a\asymp\theta^2/\log L$.
\end{proof}

\subsection{A near-optimal cyclic block}

Here we consider primorial numbers, i.e. products of all primes not exceeding
given thresholds. We will use letter $q$ to denote primes.

\begin{lemma}[Primorial block]
\label{lem:primorial-block}
Let $\eta>0$.  There is a constant
$c=c(\eta)>0$ such that for every sufficiently large primorial
\[
 D=\prod_{q\le y}q
\]
the Fourier matrix admits a decomposition $\mathcal{F}_D=B_D+E_D$ with
\begin{equation}
 \label{eq:primorial-rho}
 \rank B_D\le D^{1-c/\log\log D}
\end{equation}
and
\begin{equation}
 \label{eq:primorial-delta}
 \Delta_1(E_D) \le D^{-1/2+\eta}.
\end{equation}
\end{lemma}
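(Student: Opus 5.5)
The plan is to identify $\mathcal{F}_D$ with a tensor product over the prime factors and to apply Lemma~\ref{lem:balanced-products} directly, choosing the number of retained layers $k$ to be a small fixed fraction of the number of factors. By the Chinese remainder theorem, $\Z_D\cong\prod_{q\le y}\Z_q$. By~\eqref{eq:intro-tensor-Fourier}, up to row and column permutations (which change neither rank nor $\Delta_1$), $\mathcal{F}_D=\bigotimes_{q\le y}\mathcal{F}_{\Z_q}$. The setup is then:
\[
d=\pi(y),\qquad N=D,\qquad N_{\max}\le y,\qquad |G_i|=q\ge2.
\]
We take $k=\lfloor\theta d\rfloor$ with $\theta=\theta(\eta)\le 1/2$ small, and set $B_D:=B_G$ and $E_D:=\mathcal{F}_D-B_D$.

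The only inputs needed are the prime number theorem estimates
\[
\log D=\vartheta(y)\sim y,\qquad d=\pi(y)\sim y/\log y .
\]
In particular $\log y\sim\log\log D$.

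\emph{Error.} Take logarithms in~\eqref{eq:balanced-general-error} and divide by $\log D$:
\[
\frac{\log\bigl(D^{1/2}\Delta_1(E_D)\bigr)}{\log D}
\le\frac{d\log2+\theta d\log(C/\theta^2)+\tfrac{\theta d}{2}\log y}{\log D}.
\]
Using $d\log y\sim y\sim\log D$, the right-hand side is
\[
\frac{\log 2+\theta\log(C/\theta^2)}{\log y}+\frac{\theta}{2}+o(1).
\]
Choose $\theta\le\eta$. Then for $y$ large (equivalently $D$ large) this is below $\eta$, which gives~\eqref{eq:primorial-delta}. For $\theta$ small we have $k\ge1$, since $d\to\infty$.

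\emph{Rank.} From~\eqref{eq:balanced-general-rank},
\[
\rank B_D\le 2D\exp(-c\theta^2 d).
\]
Since $d\gtrsim y/\log y\gtrsim \log D/\log\log D$, this is at most $D^{1-c'\theta^2/\log\log D}$; the factor $2$ is absorbed for $D$ large. This is~\eqref{eq:primorial-rho} with $c(\eta)\asymp\theta(\eta)^2$.

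The step that needs care is the balance in the error exponent. The factor $N_{\max}^{k/2}=y^{\theta d/2}$ is of order $D^{\theta/2}$, so it is not negligible and forces $\theta\lesssim\eta$. By contrast, the factors $2^d$ and $(Cd^2/k^2)^k$ contribute only $D^{O(1/\log y)}$, because the primes are large on average and $d=o(\log D)$. Once $\theta$ is fixed in terms of $\eta$, the rank gain is only $\exp(-c\theta^2 d)$. This is why the rank exponent loses exactly the factor $1/\log\log D$, and no more can be expected from this lemma alone.
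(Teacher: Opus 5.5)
Your proposal is correct and follows the paper's proof: you write $\mathcal{F}_D$ as $\bigotimes_{q\le y}\mathcal{F}_{\Z_q}$ via the Chinese remainder theorem, apply Lemma~\ref{lem:balanced-products} with $k=\lfloor\theta d\rfloor$, and use prime-number estimates to bound the error exponent by $O(1/\log y)+O(\theta)$ and the rank factor by $e^{-c\theta^2 d}=D^{-c'/\log\log D}$. The only cosmetic difference is that you invoke the asymptotic prime number theorem, which allows $\theta\le\eta$, whereas the paper uses order-of-magnitude (Chebyshev-type) estimates $d\asymp y/\log y$, $\log D\asymp y$; those suffice if $\theta$ is taken a smaller constant multiple of $\eta$.
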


\begin{proof}
By the Chinese remainder theorem,
\[
 \Z_D\cong\prod_{q\le y}\Z_q.
\]
Put $d:=\pi(y)$ and choose $k:=\lfloor\theta d\rfloor$,
where $0<\theta<1/2$ will depend only on $\eta$.  Apply
Lemma~\ref{lem:balanced-products} to the factors $\Z_q$, $q\le y$. It gives the
decomposition $\mathcal{F}_D = B_D+E_D$ with
\[
 \rank B_D
 \le D\cdot 2e^{-c k^2/d}
\]
and
\begin{equation}
\label{approx_error_estimate_D}
    \Delta_1(E_D) \le D^{-1/2}2^d (Cd^2/k^2)^k y^{k/2}.
\end{equation}

Standard prime-number estimates give
\[
 d\asymp\frac{y}{\log y},
 \qquad
 \log D\asymp y,
 \qquad
 \log\log D\asymp\log y.
\]

We substitute in the approximation error
estimate~\eqref{approx_error_estimate_D} the choice $k=\lfloor \theta d\rfloor$
and the order estimates for $d$ and $D$ in terms of $y$:
$$
    \frac{\log D^{1/2}\Delta_1(E_D)}{\log D}
    \lesssim \frac{d + \theta d \log(1/\theta^2) + \theta d \log y}{\log D}
    \lesssim \frac{1}{\log y} + \theta.
$$
Choose $\theta=\theta(\eta)>0$ sufficiently small and ensure that $y$ is
    sufficiently large to make the bound less than $\eta$. This gives
    $\Delta_1(E_D)\le D^{-1/2+\eta}$.

The rank estimate gives
\[
\rank B_D \le D\cdot 2e^{-c_1\theta^2d} \le D^{1-c_2/\log\log D}.
\]
\end{proof}

\subsection{Passage from $\Z_D^m$ to $\Z_{D^m}$}

The primorial orders lemma is not enough to obtain a power gain in rank.
We will need to fix a ``block'' $\Z_D$, consider groups $\Z_D^m$,
$m\to\infty$, and relate the structure of $\mathcal{F}_{D^m}$ to that of
$\mathcal{F}_{\Z_D^m}=\mathcal{F}_D^{\otimes m}$.

We identify the row and column indices of $\mathcal{F}_{D^m}$ with their base-$D$
digit vectors by writing
\begin{equation}
    \label{digit_vectors}
 x=\sum_{i=0}^{m-1}x_iD^i,
 \qquad
 y=\sum_{j=0}^{m-1}y_jD^{m-1-j},
 \qquad
 0\le x_i,y_j<D.
\end{equation}
We have
$$
\mathcal{F}_{D^m}(x,y) = \exp(2\pi i xy/D^m),
\qquad \mathcal{F}_D^{\otimes m}(x,y) = \exp(2\pi i\sum_{j=0}^{m-1}x_jy_j/D).
$$

Thus $\mathcal{F}_{D^m}$ is reindexed by the same digit pairs that index the tensor
power $\mathcal{F}_D^{\otimes m}$ (with the column digits written in reverse order).
The next lemma shows that, after this reindexing, the difference between the two
Fourier matrices is only a low-complexity phase correction.

For entrywise products we use
\begin{equation}
 \label{eq:Hadamard-rank}
 \rank(A\circ B)\le\rank A\,\rank B,
 \qquad
    \Delta_1(A\circ B)\le\Delta_1(A)\Delta_\infty(B).
\end{equation}

\begin{lemma}[Phase correction]
\label{lem:approx-tensorization}
There is an absolute constant $C>0$ such that the following holds.
For every sufficiently large $D$ and $m$ there is a matrix $\Psi_{D,m}$ of order $D^m$
such that
\[
 \rank \Psi_{D,m}\le\exp(Cm\log\log D),
 \qquad
    \Delta_\infty(\Psi_{D,m})\le2,
\]
and
\[
    \Delta_\infty(\mathcal{F}_{D^m}-\mathcal{F}_D^{\otimes m}\circ \Psi_{D,m})\le D^{-m}.
\]
\end{lemma}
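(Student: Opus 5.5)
The plan is to compute the correction phase exactly, factor it over the column digits, and then approximate each factor by a low-degree polynomial, which gives a low-rank matrix.

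\textbf{Step 1: exact phase identity.} With the digit conventions~\eqref{digit_vectors},
\[
 \frac{xy}{D^m}=\sum_{i,j=0}^{m-1}x_iy_jD^{\,i-j-1}.
\]
Terms with $i\ge j+1$ are integers and vanish modulo $1$. The diagonal terms $i=j$ give exactly the phase $\sum_j x_jy_j/D$ of $\mathcal{F}_D^{\otimes m}$. What remains is
\[
 S(x,y):=\sum_{j=0}^{m-1}y_j\,w_j(x),\qquad w_j(x):=D^{-j-1}\sum_{i<j}x_iD^i\in[0,D^{-1}).
\]
Hence $\mathcal{F}_{D^m}=\mathcal{F}_D^{\otimes m}\circ\Phi$ exactly, where
\[
 \Phi(x,y)=\prod_{j}\exp\bigl(2\pi i\,y_jw_j(x)\bigr).
\]
Each factor depends on one column digit $y_j$ and one real function $w_j(x)$ of the row, and its argument $s_j:=y_jw_j(x)$ lies in $[0,1)$. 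It therefore suffices to build $\Psi_{D,m}$ with $|\Phi-\Psi_{D,m}|\le D^{-m}$ entrywise. The bound $\Delta_\infty(\Psi_{D,m})\le2$ is then automatic, since $|\Phi|\equiv1$. Note also that $\Delta_\infty(\mathcal{F}_{D^m}-\mathcal{F}_D^{\otimes m}\circ\Psi_{D,m})\le\Delta_\infty(\Phi-\Psi_{D,m})$, because the entries of $\mathcal{F}_D^{\otimes m}$ are unimodular.

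\textbf{Step 2: polynomial approximation of each factor.} Write $e^{2\pi i s}=e^{\pi i}e^{2\pi i(s-1/2)}$ with $|s-1/2|\le1/2$. Let $P_r(t)$ be the degree-$r$ Taylor polynomial of $e^{2\pi i t}$, so that on $|t|\le 1/2$ we have $|e^{2\pi it}-P_r(t)|\le \pi^{r+1}/(r+1)!=:\eps_r$. Set
\[
 \Psi_{D,m}(x,y):=\prod_j\bigl(-P_r(y_jw_j(x)-\tfrac12)\bigr).
\]
Expanding $(y_jw_j-\tfrac12)^\ell$ in powers $y_j^\alpha w_j^\beta$ with $\alpha+\beta\le r$ and grouping terms by the powers of $y_j$, each factor is a sum of at most $r+1$ products $g_\alpha(x)\,y_j^\alpha$. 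So each factor, viewed as a matrix in $(x,y)$, has rank at most $r+1$. By~\eqref{eq:Hadamard-rank}, $\rank\Psi_{D,m}\le(r+1)^m$.

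For the error, telescope the product and use $|P_r|\le1+\eps_r$ on $|t|\le1/2$:
\[
 |\Phi-\Psi_{D,m}|\le m\eps_r(1+\eps_r)^{m}\le 2m\eps_r .
\]

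\textbf{Step 3: choice of $r$, and the main obstacle.} We need $2m\eps_r\le D^{-m}$. Since $\eps_r\le (e\pi/r)^{r}$, it suffices to take $r\ge C_0\,m\log D/\log(m\log D)$, which gives rank $\exp\bigl(m\log(C_0m\log D)\bigr)$. This is the required $\exp(Cm\log\log D)$ precisely when $\log m\lesssim\log\log D$, i.e.\ $m\le(\log D)^{O(1)}$.

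The main obstacle is removing this restriction when $m$ is much larger than any power of $\log D$. The difficulty is that every factor must individually be approximated to accuracy $D^{-m}/m$, and Taylor truncation then costs degree about $m\log D$ per factor. What I would try first is to exploit the geometric decay inside $w_j$. Split
\[
 w_j=\sum_{t\ge1}x_{j-t}D^{-t-1}
\]
into a ``near'' part $t\le T$ and a ``far'' part. The far part is tiny in modulus, so its exponential is within $O(D^{-T})$ of $1$ and admits very low-degree approximations. The near part involves only $T$ neighbouring digits and could be treated by a banded (transfer-matrix) factorization of $\exp\bigl(2\pi i\sum_j y_j\cdot\text{near}_j\bigr)$.

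If this fails, I would instead check how $m$ and $D$ are related in the later use of the lemma. If there $m\le(\log D)^{O(1)}$ (or if $m$ can be chosen so), Steps 1--3 already prove the statement as worded in that regime.
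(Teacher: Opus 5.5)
Your Step~1 is correct: it is the paper's bilinear decomposition of the correction phase, grouped by column digits instead of row digits. Steps~2--3, however, do not prove the lemma, and the restriction you flag is fatal rather than technical. You approximate each of the $m$ factors $e^{2\pi i y_jw_j(x)}$ separately, so each factor needs accuracy about $D^{-m}/m$. That forces per-factor degree $r\gtrsim m\log D/\log(m\log D)$, and under the Hadamard product the ranks multiply: $\rank\Psi_{D,m}\le(r+1)^m=\exp\bigl((1+o(1))\,m\log(m\log D)\bigr)$. For fixed $D$ and $m\to\infty$ this is even worse than the trivial bound $D^m$. The lemma is quantified over all sufficiently large $D$ and $m$, and the regime that matters is exactly the one you cannot handle. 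In Proposition~\ref{prop:cyclic-polynomial-rank}, $D$ is fixed in terms of $\eta$ and $m\to\infty$. The argument there needs $\rank\Psi_{D,m}\le e^{K_Dm}$ with $K_D$ independent of $m$, so that $\rank(A_m\circ\Psi_{D,m})\le D^me^{-m}$. So your fallback (checking how $m$ and $D$ are related in the later use) fails. The near/far splitting is only a sketch and does not address the real problem, which is that the per-factor degrees multiply.

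The missing idea is simply not to factor the exponential. In your notation, $S=\sum_{j=1}^{m-1}y_jw_j(x)$ satisfies $0\le S<m$ and is a sum of $m-1$ rank-one matrices. Take one Taylor polynomial $P_r$ of $e^{2\pi it}$ with $r=\lceil m\log D\rceil$ and put $\Psi_{D,m}:=P_r(S)$. The error is $|e^{2\pi iS}-P_r(S)|\le(2\pi m)^{r+1}/(r+1)!\le(2\pi e/\log D)^{m\log D}\le D^{-m}$ for large $D$, which also gives $\Delta_\infty(\Psi_{D,m})\le2$. The multinomial expansion writes $P_r(S)$ as a linear combination of the rank-one matrices $\prod_jw_j(x)^{\nu_j}\cdot\prod_jy_j^{\nu_j}$ with $\nu_1+\dots+\nu_{m-1}\le r$. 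Hence
\[
\rank\Psi_{D,m}\le\binom{m-1+r}{m-1}\le\Bigl(e\bigl(1+\tfrac{r}{m-1}\bigr)\Bigr)^{m-1}\le(C\log D)^m
\]
uniformly in $m$. The total degree budget $r\asymp m\log D$ is now shared among the $m-1$ coordinates. The monomial count is therefore about $(er/m)^m$ instead of $(r+1)^m$, which removes the $m^m$ loss. This is exactly the paper's proof, where the same bilinear form is written as $\sum_i u_i(x)v_i(y)$ with $u_i(x)=x_i$ and $v_i(y)=\sum_{j>i}y_jD^{-(j-i+1)}$.
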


\begin{proof}
    Using the notation from~\eqref{digit_vectors}, we write
$$
    \frac{xy}{D^m} = \sum_{i,j=0}^{m-1} x_iy_j D^{i-j-1}.
$$
    The terms with $i>j$ are integers and the terms with $i=j$ give
    $\mathcal{F}_D^{\otimes m}$. Hence
\[
 \mathcal{F}_{D^m}(x,y)=\mathcal{F}_D^{\otimes m}(x,y)e^{2\pi i\Phi(x,y)},
\]
where
\[
 \Phi(x,y) :=\sum_{0\le i<j\le m-1}\frac{x_iy_j}{D^{j-i+1}}
 =\sum_{i=0}^{m-2}u_i(x)v_i(y),
\]
\[
 u_i(x):=x_i,
 \qquad
 v_i(y):=\sum_{j>i}\frac{y_j}{D^{j-i+1}}.
\]
Since $v_i(y)\le1/D$, one has $0\le\Phi(x,y)<m$.

Let
\[
 P_r(t):=\sum_{\ell=0}^r\frac{(2\pi it)^\ell}{\ell!},
 \qquad
 r:=\lceil m\log D\rceil.
\]
Define $\Psi_{D,m}(x,y):=P_r(\Phi(x,y))$.  Taylor's remainder and Stirling's
lower bound give
\[
 \left|e^{2\pi i\Phi}-P_r(\Phi)\right|
 \le\frac{(2\pi m)^{r+1}}{(r+1)!}
 \le\left(\frac{2\pi e}{\log D}\right)^{m\log D}
 \le D^{- m}
\]
for sufficiently large $D$.  This also gives
    $\Delta_\infty(\Psi_{D,m})\le2$.

By the multinomial expansion, $\Psi_{D,m}$ is a sum of rank-one
terms:
\[
\Psi_{D,m}(x,y)
 = P_r\left(\sum_{i=0}^{m-2}u_i(x)v_i(y)\right)
 = \sum_{\nu_0+\ldots+\nu_{m-2}\le r}
 c_\nu
 \left(\prod_{i=0}^{m-2} u_i(x)^{\nu_i}\right)
 \left(\prod_{i=0}^{m-2} v_i(y)^{\nu_i}\right).
\]
The terms are indexed by
$\nu_0+\cdots+\nu_{m-2}\le r$.
Consequently,
\[
 \rank \Psi_{D,m}
 \le\binom{m-1+r}{m-1}
 \le (C\log D)^m
 \le\exp(Cm\log\log D).
\]
\end{proof}

\subsection{Higher-order Fourier-grid interpolation}

The last ingredient we need is the way
of passing from a low-rank approximation of higher-order Fourier matrix $\mathcal{F}_M$ to
an approximation of lower-order matrix $\mathcal{F}_N$. The simplest way is to pick first
$N$ rows from the larger approximant and pick columns with indices
$m_j:=\lfloor jM/N\rfloor$, using that
$$
\mathcal{F}_M(a,m_j) = \exp(2\pi iam_j/M)\approx \exp(2\pi iaj/N) = \mathcal{F}_N(a,j),\quad 0\le a,j<N.
$$
Unfortunately, this gives poor approximation error and we use a more involved
method, the polynomial interpolation.

Let us recall basic properties of Lagrange interpolation.  Let
$\ell_0(x),\ldots,\ell_{s-1}(x)$ be the Lagrange basis for some distinct nodes
$x_0,\ldots,x_{s-1}\in[a,b]$:
\begin{equation}
\label{lagrange_polynomial}
    \ell_k(x) = \prod_{\substack{0\le r< s\\r\ne k}}\frac{x-x_r}{x_k-x_r}.
\end{equation}
A function $f$ is interpolated at these nodes by the polynomial $Lf(x) :=
\sum_{r=0}^{s-1} f(x_r)\ell_r(x)$. We will use a standard interpolation
remainder formula, valid for real-valued functions $f\in C^s[a,b]$ and all $x\in[a,b]$:
$$
f(x) - Lf(x) = \frac{f^{(s)}(\xi)}{s!}\prod_{r=0}^{s-1}(x-x_r),
\quad \xi\in[a,b].
$$
Then the inequality for complex-valued functions follows:
\begin{equation}
    \label{interpolation_bound}
    |f(x) - Lf(x)| \le \frac{2}{s!}\max_{a\le \xi\le b}|f^{(s)}(\xi)|\cdot \prod_{r=0}^{s-1}|x-x_r|.
\end{equation}

\begin{lemma}[Higher-order Fourier-grid interpolation]
\label{lem:fourier-grid-interpolation}
Let $M>N$ and let $2\le s\le\lfloor M/N\rfloor$.  There are a row-selection
matrix $R\in\{0,1\}^{N\times M}$ and a matrix $P\in\C^{M\times N}$ such that
\begin{equation}
 \label{eq:grid-deterministic}
    \Delta_\infty(\mathcal{F}_N-R\mathcal{F}_MP)
 \le2\left(\frac{2\pi N}{M}\right)^s,
\end{equation}
while, for every $M\times M$ matrix $E$,
\begin{equation}
 \label{eq:grid-transfer}
 \Delta_1(REP)
    \le2^s\frac{M}{N}\Delta_1(E).
\end{equation}
\end{lemma}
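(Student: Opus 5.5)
We take $R$ to select the first $N$ rows of $\mathcal{F}_M$, i.e. rows $a=0,\dots,N-1$. We take $P$ to be the matrix that realizes Lagrange interpolation in the column variable.

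Fix a column index $j\in\{0,\dots,N-1\}$ of $\mathcal{F}_N$ and consider the real point $t_j:=jM/N$. We want to approximate $e^{2\pi i a t_j/M}=\mathcal{F}_N(a,j)$ from the values $e^{2\pi i a m/M}$ at integer $m$. For this we choose $s$ consecutive integer nodes $m_{j,0}<\dots<m_{j,s-1}$ in $[0,M)$ with $t_j$ between them, e.g. starting at $\lfloor t_j\rfloor-\lfloor (s-1)/2\rfloor$, shifted so they stay in $[0,M-1]$. The spacing $M/N\ge s$ makes this possible; periodicity modulo $M$ also handles the boundary for $j=0$. Let $\ell_{j,r}$ be the Lagrange basis for these nodes, and set $P(m_{j,r},j):=\ell_{j,r}(t_j)$, with all other entries of the $j$-th column equal to $0$. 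Then
$$(R\mathcal{F}_MP)(a,j)=\sum_r e^{2\pi i a m_{j,r}/M}\ell_{j,r}(t_j)=Lf_a(t_j),\qquad f_a(x)=e^{2\pi i a x/M}.$$

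\textbf{Step 1: the deterministic bound.} We apply \eqref{interpolation_bound} to $f_a$ on an interval of length less than $s$ containing the nodes and $t_j$. There $|f_a^{(s)}|\le(2\pi a/M)^s\le(2\pi N/M)^s$. Since the nodes are consecutive integers and $t_j$ lies within their span, $\prod_r|t_j-m_{j,r}|\le s!$; indeed this product is at most $\prod_r (r+1)$ for the node ordering by distance. This gives $|\mathcal{F}_N(a,j)-Lf_a(t_j)|\le 2(2\pi N/M)^s$, which is \eqref{eq:grid-deterministic}.

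\textbf{Step 2: the transfer bound.} For any $E$, $(REP)(a,j)=\sum_r E(a,m_{j,r})\ell_{j,r}(t_j)$. Hence
$$\sum_j|(REP)(a,j)|\le \max_{j,r}|\ell_{j,r}(t_j)|\cdot\sum_j\sum_r|E(a,m_{j,r})|.$$
We need two estimates here.
First, $|\ell_{j,r}(t_j)|\le 2^s$, or better, for nodes at consecutive integers with $t_j$ in their convex hull. The numerator is at most $s!$, the denominator is $r!(s-1-r)!$, so the ratio is at most $s\binom{s-1}{r}\le s2^{s-1}$. I expect the sharp $2^s$ to need a finer count: the numerator is bounded by $(r+1)!(s-r)!/\dots$ depending on the position of $t_j$, which gives $\binom{s}{r}$-type bounds $\le 2^s$. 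This bookkeeping is the main (mild) obstacle.
Second, distinct $j$ give windows centered at points $M/N\ge s$ apart, and each window has length $s$. So each column $m$ of $E$ is used by at most one or two $j$'s, and then $\sum_j\sum_r|E(a,m_{j,r})|\le \sum_m|E(a,m)|\le M\Delta_1(E)$ up to that small multiplicity.

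Normalizing by $1/N$ gives $\Delta_1(REP)\le 2^s (M/N)\Delta_1(E)$, which is \eqref{eq:grid-transfer}. If the windows overlap, which happens only at the wraparound, the spare factor from the coefficient bound absorbs the multiplicity.
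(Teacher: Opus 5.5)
Your construction matches the paper's: $R$ selects the first $N$ rows, column $j$ of $P$ holds Lagrange weights at $s$ consecutive integer nodes near $jM/N$, and \eqref{eq:grid-deterministic} follows from the remainder formula. Step~1, including $\prod_r|t_j-m_{j,r}|\le s!$, is correct.

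The gap is in Step~2. The constant $2^s$ in \eqref{eq:grid-transfer} rests on a coefficient bound $|\ell_{j,r}(t_j)|\le 2^{s-1}$, and you do not prove it. Bounding the numerator by $s!$ gives only $|\ell_{j,r}(t_j)|\le s\binom{s-1}{r}\le s2^{s-1}$. That yields $\Delta_1(REP)\le s2^{s-1}\cdot(\text{multiplicity})\cdot\frac MN\Delta_1(E)$, which is off from the claim by a factor of order $s$. The rest of that step ("I expect the sharp $2^s$ to need a finer count\dots") is a conjecture. For the same reason your last sentence is circular: with $s2^{s-1}$ there is no spare factor to absorb a multiplicity of $2$.

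The missing estimate is short. The numerator $\prod_{k\ne r}|t_j-m_{j,k}|$ has only $s-1$ factors and is at most $(s-1)!$:
\begin{itemize}
\item For $t$ in the hull of $s$ consecutive integers and $k\ge1$, the interval $[t-k,t+k]$ contains at least $k+1$ nodes. So the $k$-th closest node is at distance at most $k$.
\item After deleting node $r$, the $k$-th closest remaining node ($0\le k\le s-2$) is at distance at most $k+1$.
\end{itemize}
Hence $|\ell_{j,r}(t_j)|\le\binom{s-1}{r}\le 2^{s-1}$.

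The paper avoids this bookkeeping by not centering the windows. It uses nodes $m_j,\dots,m_j+s-1$ with $m_j=\lfloor jM/N\rfloor$, so $\tau_j\in[0,1)$, and then $|\tau_j|\le1$ and $|\tau_j-k|\le k$ give $(s-1)!$ at once. This choice also removes your boundary and multiplicity discussion:
\begin{itemize}
\item $m_{j+1}-m_j\ge\lfloor M/N\rfloor\ge s$, so the columns of $P$ have pairwise disjoint supports.
\item $m_{N-1}+s-1\le M-1$, so the supports stay in range.
\end{itemize}
Together these give $\sum_j|(REP)_{a,j}|\le 2^{s-1}\sum_m|E_{a,m}|$ directly.

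Your centered windows, reduced modulo $M$, are also cyclically disjoint for the same reason. Only the variant that shifts the $j=0$ window to $[0,s-1]$ can meet the $j=1$ window. Even that is harmless, because $t_0=0$ is itself a node, so column $0$ of $P$ has the single entry $P_{0,0}=1$.
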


\begin{proof}
Let $R$ select the first $N$ rows.  Put
\[
 m_j:=\lfloor Mj/N\rfloor,
 \qquad
 \tau_j:=Mj/N-m_j\in[0,1).
\]
Let $\ell_0,\dots,\ell_{s-1}$ be the Lagrange basis for the nodes
    $x_0=0$, $x_1=1$, \dots, $x_{s-1}=s-1$, and define for $j=0,\ldots,N-1$ the $j$th column of $P$ by
\[
 P_{m_j+r,j}:=\ell_r(\tau_j),
 \qquad r=0,1,\ldots,s-1.
\]
    Note that $m_{N-1}+s-1\le M(N-1)/N+\lfloor M/N\rfloor-1\le M-1$, so $P$ is
    correctly defined. All other entries of $P$ are zero. We will also use that
    the supports of columns of $P$ are distinct since $m_{j+1}-m_j\ge\lfloor
    M/N\rfloor\ge s$.

    Let us estimate the error $\Delta_\infty(\mathcal{F}_N-R\mathcal{F}_MP)$. Fix $0\le a,j<N$ and
consider the function $f(x):=\exp(2\pi ia(m_j+x)/M)$. We have
$$
f(\tau_j)
= \exp(2\pi ia(m_j+\tau_j)/M)
= \exp(2\pi iaj/N)
= (\mathcal{F}_N)_{a,j}
$$
and
$$
(R\mathcal{F}_MP)_{a,j} = \sum_{r=0}^{s-1}\exp(2\pi ia(m_j+r)/M)\ell_r(\tau_j) =
Lf(\tau_j).
$$
    Using the interpolation error bound~\eqref{interpolation_bound}, we obtain
    $$
    |(\mathcal{F}_N)_{a,j}-(R\mathcal{F}_MP)_{a,j}|
    \le \frac{2}{s!}\max|f^{(s)}(x)| \cdot\prod_{r=0}^{s-1}|\tau_j-r|
    \le \frac{2}{s!}\left(\frac{2\pi a}{M}\right)^s (s-1)!
    \le 2\left(\frac{2\pi N}{M}\right)^s,
    $$
which proves~\eqref{eq:grid-deterministic}.

Let us prove~\eqref{eq:grid-transfer}.
For $\tau\in[0,1]$ we have:
$$
    |\ell_r(\tau)|
    = \prod_{\substack{j\ne r\\0\le j<s}}\frac{|\tau-j|}{|r-j|}
    \le \frac{(s-1)!}{\prod_{j\ne r}|r-j|}
    = \frac{(s-1)!}{r!(s-1-r)!} = \binom{s-1}{r},
$$
hence
$$
    \sum_{r=0}^{s-1} |\ell_r(\tau)| \le 2^{s-1} \le 2^s.
$$
For a fixed $0\le a,j<N$ we have
$$
    |(REP)_{a,j}|
    \le \sum_{r=0}^{s-1} |\ell_r(\tau_j)|\,|E_{a,m_j+r}|.
$$
Summing over $j$, using the disjointness of the supports and the inequality
    $|\ell_r(\tau)|\le 2^s$, we obtain
$$
\sum_{j=0}^{N-1}|(REP)_{a,j}| \le 2^{s} \sum_{m=0}^{M-1}|E_{a,m}|.
$$
The estimate~\eqref{eq:grid-transfer} follows.
\end{proof}

\subsection{The cyclic case}

\begin{proposition}[DFT approximation]
\label{prop:cyclic-polynomial-rank}
Let $\eta>0$.  There are constants
$a=a(\eta)>0$ and $N_0=N_0(\eta)$ such that, for every $N\ge N_0$,
there is a matrix $B_N$ satisfying
\[
 \rank B_N\le N^{1-a},
 \qquad
 \Delta_1(\mathcal{F}_N-B_N)\le N^{-1/2+\eta}.
\]
\end{proposition}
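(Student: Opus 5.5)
The plan is to chain the four ingredients: the primorial block (Lemma~\ref{lem:primorial-block}), tensor power amplification (Lemma~\ref{lem:polynomial-tensor-closure}), the phase correction (Lemma~\ref{lem:approx-tensorization}), and grid interpolation (Lemma~\ref{lem:fourier-grid-interpolation}) down to $N$.

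\textbf{Step 1: a fixed block.} Fix a large primorial $D=D(\eta)$. Lemma~\ref{lem:primorial-block} (with $\eta/4$) gives $\mathcal{F}_D=B_D+E_D$ with $\rank B_D\le D^{1-a_0}$, where $a_0:=c/\log\log D$, and $\Delta_1(E_D)\le D^{-1/2+\eta/4}$. Since $\Delta_1(\mathcal{F}_D)=1$, Lemma~\ref{lem:polynomial-tensor-closure} applies with $a=a_0$, $b=1/2-\eta/4$, a small $\sigma$, and $\zeta=\eta/4$, provided $D\ge L$. For this I first pick $\sigma,\zeta$ depending on $\eta$; the required $L$ then depends on $a_0$, hence on $D$. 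This is a circularity, and I expect it to be the main obstacle. I would break it by not invoking the lemma as stated and instead redoing its counting for $d=m$ identical factors of size $D$. The rank cost $2^m$ has to be beaten by $D^{-a_0\sigma m}$, that is, $a_0\sigma\log D=c\sigma\log D/\log\log D>\log 2$, which holds for large $D$. The error has a factor $(1+\delta)^m\le e$ as long as $m\ll D^{1/2}$, and in any case it contributes only a factor $2^m=(D^m)^{\log2/\log D}$, which is harmless for large $D$. This yields
\[
\mathcal{F}_D^{\otimes m}=B'+E',\qquad \rank B'\le D^{m(1-a_1)},\qquad \Delta_1(E')\le D^{-m(1/2-\eta/2)},
\]
with $a_1=a_1(\eta,D)>0$ independent of $m$.

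\textbf{Step 2: pass to $\Z_{D^m}$.} Set $M=D^m$. By Lemma~\ref{lem:approx-tensorization}, $\mathcal{F}_M=\mathcal{F}_D^{\otimes m}\circ\Psi+E''$ (after reindexing rows and columns), with $\Delta_\infty(E'')\le D^{-m}$. I will use the approximant $B'\circ\Psi$. Its rank is at most $D^{m(1-a_1)}e^{Cm\log\log D}\le M^{1-a_1/2}$ once $D$ is large, since $\log\log D/\log D\to0$. By~\eqref{eq:Hadamard-rank} its error is at most $\Delta_1(E')\cdot2+D^{-m}\le 3M^{-1/2+\eta/2}$.

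\textbf{Step 3: descend to $N$.} Given $N\ge N_0$, choose $m$ with $N^{1+\kappa}\le D^m\le D\,N^{1+\kappa}$ for a small $\kappa=\kappa(\eta)$. Apply Lemma~\ref{lem:fourier-grid-interpolation} with $s=\lceil 1/\kappa\rceil$, which is allowed because $M/N\ge N^\kappa\ge s$. Put $B_N:=R(B'\circ\Psi)P$, so $\rank B_N\le M^{1-a_1/2}$. The error splits as
\[
\Delta_\infty(\mathcal{F}_N-R\mathcal{F}_MP)+2^s\frac MN\,3M^{-1/2+\eta/2}.
\]
The first term is at most $2(2\pi)^sN^{-\kappa s}\le CN^{-1}$. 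For the second, $M/N\le DN^{\kappa}$, so it is at most $C(D,\kappa)N^{\kappa}N^{(1+\kappa)(-1/2+\eta/2)}\le N^{-1/2+\eta}$ when $\kappa\lesssim\eta$ and $N$ is large. For the rank, $M^{1-a_1/2}\le (DN^{1+\kappa})^{1-a_1/2}\le N^{1-a}$ if additionally $\kappa<a_1/4$, taking for instance $a=a_1/8$. Choosing $\kappa=\min(\eta/4,a_1/8)$ keeps all constants dependent on $\eta$ only, since $D=D(\eta)$.
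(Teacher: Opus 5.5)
Your proposal is correct and follows the paper's proof. The counting you redo in Step~1, keeping the terms with at least $\sigma m$ factors $B_D$, is precisely Lemma~\ref{lem:tensor-truncation} with $s\approx(1-\sigma)m$; the paper's $\theta$ is your $1-\sigma$, and the paper applies that lemma directly, so the circularity you noticed never arises. The only real deviation is in Step~3: you descend from $M\approx N^{1+\kappa}$ with a bounded interpolation order $s=\lceil 1/\kappa\rceil$, instead of the paper's $M\approx N\log N$ with $s\approx c_\eta\log N$. This works equally well, at the cost of needing $\kappa\lesssim a_1$ for the rank exponent.

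One small slip is in Step~2: ``$\log\log D/\log D\to0$'' alone does not justify the rank bound, because $a_1\asymp\sigma/\log\log D$ also shrinks as $D$ grows. The comparison actually needed is $(\log\log D)^2/\log D\to0$, which does hold, so the bound is fine.
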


\begin{proof}
We may assume $0<\eta<1/2$.  Choose $\eta_0>0$ and
$\theta\in(0,1)$ so that
\begin{equation}\label{eq:theta-choice}
 \theta(1/2-\eta_0)>1/2-\eta/8.
\end{equation}
Take a sufficiently large primorial $D$ from
Lemma~\ref{lem:primorial-block}, applied with parameter $\eta_0$, and write
\[
 \mathcal{F}_D=B_D+E_D,
 \qquad
 \rho_D:=\frac{\rank B_D}{D},
 \qquad
 \delta_D:=\Delta_1(E_D).
\]

For $m$ large put $s=\lfloor\theta m\rfloor$ and apply
Lemma~\ref{lem:tensor-truncation} to $\mathcal{F}_D^{\otimes m}$.  Its rank estimate
gives
$$
    \frac{\rank A_m}{D^m}
    \le 2^m \rho_D^{m-s} \le 2^m D^{-m(1-\theta)c/\log\log D}.
$$
Let $K_D:=C\log\log D$ be the exponent in the rank bound of
Lemma~\ref{lem:approx-tensorization}. If $D$ is sufficiently large, we can
ensure that
$$
    \frac{\rank A_m}{D^m} \le \exp(-m(K_D+1)).
$$
The error estimate is
\[
 \Delta_1(\mathcal{F}_D^{\otimes m}-A_m)
 \le 4^m \delta_D^{s+1}
    \le 4^m D^{m\theta(-1/2+\eta_0)}
    = (D^m)^{-\theta(1/2-\eta_0)+\frac{\log 4}{\log D}}
 \le (D^m)^{-1/2+\eta/4}
\]
for sufficiently large $D$. Now we can fix $D$; it depends only on 
$\eta$.

Let $\Psi_{D,m}$ be the phase correction from
Lemma~\ref{lem:approx-tensorization} and put
$\widetilde B_{D^m}:=A_m\circ \Psi_{D,m}$.  Since the rank of a Hadamard
product is at most the product of the ranks,
\[
 \rank\widetilde B_{D^m}
    \le D^me^{-m(K_D+1)}\cdot e^{K_Dm}
 = D^me^{-m}
 =(D^m)^{1-a_0},
 \qquad
 a_0:=\frac{1}{\log D}.
\]
Also,
    \begin{multline*}
 \Delta_1(\mathcal{F}_{D^m}-\widetilde B_{D^m})
 \le \Delta_1(\mathcal{F}_{D^m} - \mathcal{F}_D^{\otimes m}\circ\Psi_{D,m}) + \Delta_1(\mathcal{F}_D^{\otimes m}\circ\Psi_{D,m} - \widetilde{B}_{D^m}) \le \\
        \le D^{-m}+\Delta_1(\mathcal{F}_D^{\otimes m}-A_m)\Delta_\infty(\Psi_{D,m})
    \le D^{-m} + 2(D^m)^{-1/2+\eta/4}
 \le(D^m)^{-1/2+\eta/3}
 \end{multline*}
for all sufficiently large $m$. Thus we obtained the required approximation for the powers of $D$.

It remains to pass to an arbitrary $N$.  Let $M=D^m$ be
the least power of $D$ with
\[
 M\ge N\log N.
\]
Then $M<DN\log N$.  Choose a constant
\[
 0<c_\eta<\min\left\{\frac12,\frac{\eta}{4\log2}\right\}
\]
and put $s=\lfloor c_\eta\log N\rfloor$.  For all sufficiently large $N$,
$2\le s\le M/N$, so Lemma~\ref{lem:fourier-grid-interpolation} applies to
$\widetilde B_M$ and interpolation order $s$. The resulting matrix
$B_N:=R\widetilde{B}_MP$ satisfies
\[
    \rank B_N\le \rank \widetilde{B}_M \le M^{1-a_0}
 \le(DN\log N)^{1-a_0}
 \le N^{1-a_0/2}.
\]
Let us bound the error:
\begin{multline*}
\Delta_1(\mathcal{F}_N-B_N)
\le \Delta_1(\mathcal{F}_N-R\mathcal{F}_MP) + \Delta_1(R\mathcal{F}_MP -
    R\widetilde{B}_MP) \le \\
 \le2\left(\frac{2\pi N}{M}\right)^s +2^s \frac MN M^{-1/2+\eta/3} \le \\
 \le2\left(\frac{2\pi}{\log N}\right)^s +N^{c_\eta\log2}(D\log N)(N\log N)^{-1/2+\eta/3}
 \le N^{-1/2+\eta}
\end{multline*}
for all sufficiently large $N$.  This proves the proposition.
\end{proof}

\subsection{Passage to arbitrary finite abelian groups}

\begin{proof}[Proof of Theorem~\ref{thm:main-rank}]
Apply Proposition~\ref{prop:cyclic-polynomial-rank} with a parameter $\eta_1>0$ so small that
\[
 b_1:=1/2-\eta_1>1/2-\eta/4.
\]
Thus every sufficiently large cyclic Fourier matrix $\mathcal{F}_N$ has an approximation
with rank at most $N^{1-a_1}$ and error at most $N^{-b_1}$.

    Choose $\sigma\in(0,1)$ and $\zeta>0$ so small that
\[
 b_1(1-\sigma)-\zeta>1/2-\eta/3.
\]
Let $L$ be large enough that Proposition~\ref{prop:cyclic-polynomial-rank}
applies to every $N\ge L$, Lemma~\ref{lem:polynomial-tensor-closure}
applies with parameters $a_1,b_1,\sigma,\zeta$, and
$L\ge L_0(\eta/3)$ from Lemma~\ref{lem:fixed-balanced}. Note that $L$ depends only on $\eta$.

Write
\[
 G=\Z_{N_1}\times\cdots\times\Z_{N_s}.
\]
Let $G_{\rm large}$ be the product of all factors with $N_i\ge L$.
Group the remaining cyclic factors into blocks
$G^{(1)},\dots,G^{(d)}$ satisfying
\[
    L\le|G^{(j)}|<L^2,
\]
    with one remainder $G_{\rm rem}$ of order less than $L$. If $d\ge d_0(\eta/3)$ from Lemma~\ref{lem:fixed-balanced}, we
    consider ``balanced'' product $G_{\rm bal}:=\prod_{j=1}^d G^{(j)}$;
    otherwise, we absorb the blocks $G^{(j)}$ into $G_{\rm rem}$. The latter case is simpler and we will not consider it further. So,
    $$
    G=G_{\rm rem} \times G_{\rm bal} \times G_{\rm large}.
    $$

    We approximate $\mathcal{F}_{G_{\rm bal}}$ using Lemma~\ref{lem:fixed-balanced} by a matrix of rank $|G_{\rm bal}|^{1-a_2}$
    and the error at most $|G_{\rm bal}|^{-1/2+\eta/3}$.

    For every factor in $G_{\rm large}$ we use Proposition~\ref{prop:cyclic-polynomial-rank} to approximate $\mathcal{F}_{N_i}$ by a
    matrix of rank $N_i^{1-a_1}$ and error $N_i^{-b_1}$.
    Combining all the approximations by
    Lemma~\ref{lem:polynomial-tensor-closure} gives an approximation for $\mathcal{F}_{G_{\rm large}}$ with rank $|G_{\rm
    large}|^{1-a_3}$ and error at most
    $$
    |G_{\rm large}|^{-b_1(1-\sigma)+\zeta}
    \le |G_{\rm large}|^{-1/2+\eta/3}.
    $$

    Put $b_2:=1/2-\eta/3$ and choose $\sigma_2,\zeta_2>0$ so small that
    $b_2(1-\sigma_2)-\zeta_2>1/2-\eta/2$.

    Apply Lemma~\ref{lem:polynomial-tensor-closure} once more now to $G_{\rm
    bal}$ and $G_{\rm large}$, with $a=\min\{a_2,a_3\}$, $b_2$, and parameters
    $\sigma_2,\zeta_2$. 
    If any of two factors is not large enough to apply Lemma, we can absorb it
    into $G_{\rm rem}$.
    We obtain, for $G':=G_{\rm bal}\times G_{\rm large}$, a matrix $B_{G'}$ such
    that
    $$
    \rank B_{G'}\le |G'|^{1-a_4},
    \qquad
    \Delta_1(\mathcal{F}_{G'}-B_{G'}) \le |G'|^{-1/2+\eta/2},
    $$

    Finally, tensor with the exact Fourier matrix $\mathcal{F}_{G_{\rm rem}}$, i.e. put
    $B_G:=B_{G'}\otimes \mathcal{F}_{G_{\rm rem}}$. The order of $G_{\rm rem}$ is
    bounded in terms of $\eta$ only. This makes only a harmless
    decrease of the final exponent $a>0$ and an increase of the threshold
    $N_0$.
\end{proof}

\section{Consequences}

\subsection{Consecutive trigonometric frequencies}

We denote $e_k(x):=\exp(2\pi ikx)$. Let $\mathcal{T}_N:=\Span\{e_k\}_{k=-N}^N$ be the space of
trigonometric polynomials of order $N$, and
$D_N$ be the Dirichlet kernel, $D_N(x):=\sum_{|k|\le N}e_k(x)$. We
will use the standard trigonometric interpolation operator
$$
P_N\colon\mathbb{C}^{2N+1}\to\mathcal{T}_N,
\qquad
(P_N y)(x) = \frac{1}{2N+1}\sum_{k=0}^{2N} y_k D_N\bigl(x-\frac{k}{2N+1}\bigr).
$$
The operator $P_N$ interpolates functions on the equidistant grid $x_k=k/(2N+1)$,
$k=0,\ldots,2N$. Namely, if $f$ is a function on $\T$ and
$y=(f(x_0),\ldots,f(x_{2N}))$, then $(P_Ny)(x_k)=f(x_k)$ for all $k$. Therefore,
if $f\in\mathcal{T}_N$, then $P_Ny\equiv f$.

Standard bounds on the Dirichlet kernel,
$\|D_N\|_1\lesssim\log N$ and $|D_N(x)|\lesssim\min\{N,|x|^{-1}\}$, $|x|\le1/2$,
imply that
$$
\|P_N\|_{L_p^{2N+1}\to L_p(T)}\lesssim\log N
$$
for $p=1$ and $p=\infty$.
The same bound holds for all $p\in(1,\infty)$ by interpolation, and this will be
sufficient for our purposes.
(In fact, Marcinkiewicz inequality between $L_p$-norms of trigonometric polynomials on
$\T$ and on the equidistant grid gives $\|P_N\|_{p\to p}\le C(p)$,
$p\in(1,\infty)$, but it is more convenient for us to use $p$-free estimate.)

\begin{proposition}[Consecutive trigonometric frequencies]
\label{prop:consecutive-frequencies}
Let $1\le p<2$ and $\eta>0$.  There are constants
$a=a(\eta)>0$ and $N_0=N_0(\eta)$ such that, for every
$N\ge N_0$,
\[
 d_{N^{1-a}}^{\mathbb C}
    \bigl(\{e^{2\pi ikx}\}_{k=-N}^{N},L_p(\T)\bigr)
 \le N^{-\alpha_p+\eta}.
\]
\end{proposition}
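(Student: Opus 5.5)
We transfer the DFT approximation of Proposition~\ref{prop:cyclic-polynomial-rank} (equivalently Theorem~\ref{thm:main-rank} for $G=\Z_M$) to the continuous setting by discretizing on the grid $x_j=j/M$ with $M:=2N+1$ and then reconstructing functions with the interpolation operator $P_N$.

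Set $M=2N+1$ and index the frequencies $k=-N,\dots,N$ by residues modulo $M$. The vectors $(e_k(x_j))_{j=0}^{M-1}$ are then exactly the rows of $\mathcal{F}_M$, up to a permutation of rows. Apply Proposition~\ref{prop:cyclic-polynomial-rank} with parameter $\eta/2$ to obtain $B_M$ with $\rank B_M\le M^{1-a}$ and $\Delta_1(\mathcal{F}_M-B_M)\le M^{-1/2+\eta/2}$. Let $b_k\in\C^M$ be the row of $B_M$ corresponding to $e_k$. All $b_k$ lie in the row space $W$ of $B_M$, which has $\dim W\le M^{1-a}$. Our candidate subspace is $V:=P_N(W)\subset\mathcal{T}_N$, so that $\dim V\le M^{1-a}\le N^{1-a/2}$ for large $N$.

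For the error, note that $e_k\in\mathcal{T}_N$, so $P_N$ applied to the grid values of $e_k$ reproduces $e_k$. Hence
\[
\|e_k-P_Nb_k\|_{L_p(\T)}=\|P_N(r_k)\|_{L_p}\lesssim \log N\,\Bigl(\frac1M\sum_j|r_{k,j}|^p\Bigr)^{1/p},
\]
where $r_k$ is the row of $\mathcal{F}_M-B_M$ associated with $e_k$ and the bound uses the normalized $L_p^{M}$ norm, as in the estimate $\|P_N\|_{L_p^{2N+1}\to L_p}\lesssim\log N$.

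The main obstacle is that Proposition~\ref{prop:cyclic-polynomial-rank} controls only the $\ell_1$-row error. Obtaining an $L_p$ error with the sharp exponent $\alpha_p$ additionally requires an entrywise bound, after which we interpolate between $\ell_1$ and $\ell_\infty$. If $\Delta_\infty(\mathcal{F}_M-B_M)\le C$, Hölder's inequality gives
\[
\Bigl(\frac1M\sum|r_{k,j}|^p\Bigr)^{1/p}\le \Delta_1^{1/p}\Delta_\infty^{1-1/p}\lesssim M^{(-1/2+\eta/2)/p}.
\]
This yields only the exponent $-1/(2p)$, which is weaker than $\alpha_p=1/p-1/2$ when $p>1$. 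So the direct Hölder route seems insufficient, and we instead work in $\ell_1$ and rescale.

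In normalized form the target is $d^{\C}_{n}(\widehat G,L_p)\le N^{-\alpha_p+\eta}$, and the Corollary after Theorem~\ref{thm:main-rank} already supplies exactly this for $\widehat{\Z_M}$ in $L_p(\Z_M)$. The justification is that Theorem~\ref{thm:main-rank} bounds the unnormalized $\|\cdot\|_{1,\infty}$ by $M^{1/2+\eta}$, and the $p$-norm of a row is at most its $1$-norm, so $\|r_k\|_{\ell_p}\le M^{1/2+\eta}$. Dividing by $M^{1/p}$ gives the normalized bound $M^{1/2+\eta-1/p}=M^{-\alpha_p+\eta}$. The discrete step therefore causes no difficulty.

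Combining with the interpolation estimate gives
\[
\dist_{L_p}(e_k,V)\lesssim \log N\cdot M^{-\alpha_p+\eta/2}\le N^{-\alpha_p+\eta}
\]
for $N\ge N_0(\eta)$, since the $\log N$ factor is absorbed into $N^{\eta/2}$. The remaining bookkeeping is the relabeling between frequencies $\{-N,\dots,N\}$ and $\Z_M$, together with the rank conversion $M^{1-a}\le N^{1-a'}$.
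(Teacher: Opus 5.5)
Your final argument is correct and is essentially the paper's proof. You approximate $\mathcal{F}_{2N+1}$ via Theorem~\ref{thm:main-rank} with parameter $\eta/2$, pass from the row $\ell_1$-bound to a normalized $L_p$-error $\lesssim N^{-\alpha_p+\eta/2}$ using $\|\cdot\|_{\ell_p}\le\|\cdot\|_{\ell_1}$, and transfer to $\T$ through $P_N$ with $\|P_N\|_{p\to p}\lesssim\log N$.

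You should delete the H\"older detour, though. Its premise $\Delta_\infty(\mathcal{F}_M-B_M)\le C$ is not available, and its verdict is backwards: $M^{-1/(2p)}$ decays faster than $M^{-\alpha_p}$ when $p>1$, so that route would give a stronger bound, not a weaker one.
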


\begin{proof}
Apply Theorem~\ref{thm:main-rank} with parameter $\eta/2$ to the matrix $\mathcal{F}_{2N+1}$.
The resulting approximant matrix $B$ has rank $n\lesssim N^{1-a}$ and 
the $L_p$-approximation error
$$
    \Delta_p(\mathcal{F}_{2N+1}-B) = (2N+1)^{-1/p}\|\mathcal{F}_{2N+1}-B\|_{p,\infty}
    \le (2N+1)^{-1/p}\|\mathcal{F}_{2N+1}-B\|_{1,\infty}
    \lesssim N^{-\alpha_p+\eta/2}.
$$

It is convenient to index rows of $\mathcal{F}_{2N+1}$ by numbers $k=-N,\ldots,N$.
The row with index $k$ is the
vector $y^k:=(e_k(\frac{j}{2N+1}))_{j=0}^{2N}$. Since $P_N$ recovers
$\mathcal T_N$, we have $P_Ny^k = e_k$. Therefore,
\begin{multline*}
    d_n^{\mathbb{C}}(\{e_{-N},\ldots,e_{N}\}, L_p(\T)) 
    \le \|P_N\|_{p\to p}\, d_n^{\mathbb{C}}(\{y^{-N},\ldots,y^{N}\}, L_p^{2N+1}) \le \\
    \le \|P_N\|_{p\to p}\, \Delta_p(\mathcal{F}_{2N+1}-B)
    \lesssim (\log N) N^{-\alpha_p+\eta/2}.
\end{multline*}
The required bound follows.
\end{proof}

\subsection{Centrally symmetric convex trigonometric spectra}

Fix an integer $d\ge1$.  For a finite set $\Lambda\subset\Z^d$ write
\[
 \mathcal E_\Lambda
 :=\{e_\lambda:\lambda\in\Lambda\},
 \qquad
 e_\lambda(x):=e^{2\pi i\langle\lambda,x\rangle},
 \quad x\in\T^d.
\]

We will apply the discrete John's theorem of Tao and Vu~\cite{TaoVu2008}.
Classic John's theorem states that any symmetric convex
body in $\R^d$ is, roughly speaking, an ellipsoid up to some constant; the discrete version describes an
intersection of a convex body with a lattice as a symmetric generalized
arithmetic progression (GAP). Here we restrict ourselves to the lattice
$\mathbb{Z}^d$. 
A \emph{symmetric GAP} in $\mathbb{Z}^d$ is defined by its ``rank'' $r$, the ``dimensions''
$N_1,\ldots,N_r$ and ``steps'' $v_1,\ldots,v_r\in\mathbb{Z}^d$.
We will identify a symmetric GAP with its ``image''~--- the set
$$
P = \{n_1v_1+\ldots+n_rv_r\colon
n_j\in\mathbb{Z},\;|n_j|\le N_j,\;j=1,\ldots,r\}.
$$
Tao and Vu work with the dimensions $N_j$ that are positive real numbers, in order to
define proper scaling of GAPs. For us it would be convenient to assume that
$N_j\in\mathbb{Z}_+$.

Formally, we need the following corollary of~\cite[Theorem~1.6]{TaoVu2008}.

\begin{exttheorem}
\label{thm_discrete_john}
    Let $K$ be a convex symmetric body in $\R^d$.
    There exists a symmetric GAP in $\mathbb{Z}^d$,
    $P=\{n_1v_1+\ldots+n_rv_r\colon |n_j|\le N_j\}$, such that $r\le
    d$,
    $$
    K\cap\mathbb{Z}^d \subset P,
    \qquad |K\cap\mathbb{Z}^d| \ge c(d)|P|,
    $$
    and the steps $v_1,\ldots,v_r$ are linearly independent.
\end{exttheorem}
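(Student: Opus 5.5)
The plan is to obtain Theorem~\ref{thm_discrete_john} by specializing \cite[Theorem~1.6]{TaoVu2008} to the lattice $\Gamma=\mathbb{Z}^d$ and then normalizing the output. As I recall it, that theorem (the discrete John's theorem) says the following. For a convex symmetric body $B\subset\R^d$ and a lattice $\Gamma\subset\R^d$ there is a symmetric GAP
$$
P=\{n_1v_1+\ldots+n_rv_r\colon |n_j|\le N_j\}
$$
with the following properties:
\begin{itemize}
\item its steps $v_j$ lie in $\Gamma$ and are linearly independent, and its rank satisfies $r\le d$;
\item it is sandwiched as $(O(d)^{-3d/2}\cdot P)\subset B\cap\Gamma\subset P$;
\item its size satisfies $|P|\le O(d)^{7d/2}|B\cap\Gamma|$.
\end{itemize}
The exact exponents are irrelevant here; only the fact that they depend on $d$ alone matters. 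I would first apply this with $B=K$ and $\Gamma=\mathbb{Z}^d$. This directly gives $K\cap\mathbb{Z}^d\subset P$, $r\le d$, linearly independent steps, and $|K\cap\mathbb{Z}^d|\ge c(d)|P|$ with $c(d)=O(d)^{-7d/2}$. If the cited version gives only the sandwich inclusion and not the cardinality bound, I would derive the bound from the inner inclusion. By linear independence, both $P$ and its dilate $\epsilon P$ (where $\epsilon=O(d)^{-3d/2}$) are proper. Hence
$$
|P|=\prod_j(2\lfloor N_j\rfloor+1),\qquad |\epsilon P|=\prod_j(2\lfloor \epsilon N_j\rfloor+1).
$$
For each $j$, the ratio of the corresponding factors is at most $C/\epsilon$: if $\epsilon N_j<1$, then $\lfloor N_j\rfloor<1/\epsilon$, so the factor $2\lfloor N_j\rfloor+1$ is at most $3/\epsilon$ while its dilated counterpart equals $1$. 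Since there are $r\le d$ factors, this gives $|P|\le (C/\epsilon)^d|K\cap\mathbb{Z}^d|$.

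Next I would pass to integer dimensions, as the paper's convention requires. Since each $n_j$ ranges over integers, the constraint $|n_j|\le N_j$ is the same as $|n_j|\le\lfloor N_j\rfloor$. So replacing $N_j$ by $\lfloor N_j\rfloor\in\mathbb{Z}_+$ changes neither the image set $P$ nor its cardinality, and all properties are preserved. If some $\lfloor N_j\rfloor=0$, the corresponding step can simply be dropped; this lowers $r$ and keeps the remaining steps independent. The degenerate case $K\cap\mathbb{Z}^d=\{0\}$ is covered by the GAP of rank $0$.

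The only real obstacle is bookkeeping. I would need to check that the precise form of \cite[Theorem~1.6]{TaoVu2008} (real dimensions, "proper" GAPs, and the lattice $\Gamma$ being general) matches the quantities used here. In particular, I would confirm that the cardinality $|P|$ refers to the image set, which is guaranteed by the linear independence of the steps. No new mathematics is needed beyond this specialization.
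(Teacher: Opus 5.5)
Your proposal is correct and takes the paper's route. The paper gives no argument beyond citing \cite[Theorem~1.6]{TaoVu2008} specialized to the lattice $\mathbb{Z}^d$. Your bookkeeping is the routine verification the paper leaves implicit: you take the outer dilate as $P$, bound $|P|\le(3/\epsilon)^d|K\cap\mathbb{Z}^d|$ via the inner inclusion and properness, and round the dimensions down to integers.
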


For a symmetric GAP with linearly independent steps (it is also called
\emph{infinitely proper}) we have $|P|=\prod(2 N_j+1)$.

Now we are ready to prove Proposition~\ref{prop:intro-convex-spectrum}.
\begin{proof}
    First we consider a parallelopiped spectra
    $$
    \Lambda=\Pi(N_1,\ldots,N_d):=\prod_{j=1}^d \{-N_j,\ldots,N_j\},
    \qquad N = |\Lambda| = \prod_{j=1}^d (2N_j+1).
    $$
    The case $d=1$ was settled in
    Proposition~\ref{prop:consecutive-frequencies}, the construction for $d>1$
    is analogous. We consider the multidimensional interpolation,
    $$
    P_{N_1,\ldots,N_d} y = \frac{1}{\prod_{j=1}^d (2N_j+1)}\sum_{k_1=0}^{2N_1}\cdots\sum_{k_d=0}^{2N_d}
    y_{k_1,\ldots,k_d} \prod_{j=1}^d D_{N_j}(x_j-k_j/(2N_j+1)).
    $$
    Similarly to the univariate case we check that
    $\|P_{N_1,\ldots,N_d}\|_{p\to p}\lesssim (\log N)^d$.
    We will apply Theorem~\ref{thm:main-rank} to the group
    $$
    G := \prod_{j=1}^d \mathbb{Z}_{2N_j+1}
    $$
    and the parameter $\eta/2$. As in the univariate case, if $y$ is a row of
    $\mathcal{F}_G$ indexed by numbers $(k_1,\ldots,k_d)$, $|k_j|\le N_j$, then
    $P_{N_1,\ldots,N_d}$ sends $y$ to $e_{\vec{k}}$, $\vec{k}=(k_1,\ldots,k_d)$.
    Therefore, for $n=N^{1-a(\eta/2)}$ we have
    $$
    d_n^{\mathbb{C}}(\{e_\lambda\colon\lambda\in\Lambda\}, L_p(\T^d)) 
    \le \|P_{N_1,\ldots,N_d}\|_{p\to p}\, \min_{\rank B\le n}
    \Delta_p(\mathcal{F}_G-B)
    \lesssim (\log N)^d N^{-\alpha_p+\eta/2}.
    $$
    This settles the first case.

    Now suppose that $\Lambda$ is a symmetric GAP in $\mathbb{Z}^d$ of rank
    $r\le d$, i.e.,
    $$
    \Lambda = P = \{n_1v_1+\ldots+n_rv_r\colon |n_j|\le N_j\},
    $$
    and the steps $v_j\in\mathbb{Z}^d$ are linearly independent.
    Consider the map
\[
 \theta\colon\T^d\to\T^r,
 \qquad
 \theta(x):=(\langle v_1,x\rangle,\dots,\langle v_r,x\rangle)\pmod1,
\]
    and the pullback operator $(\Theta f)(x):=f(\theta(x))$. 
    Since $v_j$ are linearly independent, $\theta$ is surjective. Moreover, this
    is a homomorphism, hence it preserves measure.
    Therefore, $\Theta$ is an isometry from $L_p(\T^r)$ to $L_p(\T^d)$.
    Take $\vec{k}:=(k_1,\ldots,k_r)$. We have
    $$
    \Theta e_{\vec{k}}(x_1,\ldots,x_r) = e_{\vec{k}}(\theta(x))
    = \exp(2\pi i\,\sum_{s=1}^r k_s \langle v_s,x\rangle)
    = e_\lambda(x),
    \quad \lambda = \sum_{s=1}^r k_s v_s,
    $$
    so,
    $$
    d_n^{\mathbb{C}}(\mathcal{E}_P,L_p(\T^d))
    \le d_n^{\mathbb{C}}(\mathcal{E}_{\Pi(N_1,\ldots,N_r)}, L_p(\T^r))
    $$
    and the second case reduces to the first one.

    The general case $\Lambda=K\cap\mathbb{Z}^d$ follows from the
    GAP case and Theorem~\ref{thm_discrete_john}. Indeed, we have
    $\Lambda\subset P$, $|P|\le C(d)|\Lambda|$. We obtain the rank bound
    $|P|^{1-a} \le N^{1-a/2}$ for sufficiently large $N$, and the error bound
    $|P|^{-\alpha_p+\eta}\le N^{-\alpha_p+\eta}$.
\end{proof}

\subsection{$G$-circulants}

\begin{lemma}[$L_1$ circulant transfer]
\label{lem:circulant-transfer}
Let $F$ be an $N\times N$ matrix and suppose
$F=B+E$.
Let $D=\diag(d_k)$ be diagonal with
\[
 \sum_{k=0}^{N-1}|d_k|^2\le1.
\]
For every integer $1\le m<N$ there is a matrix $B'$ such that
\[
 \rank B'\le m+2\rank B
\]
and
\[
 \Delta_1(FDF-B')
 \le\frac{N}{\sqrt{m+1}}\,
 \Delta_1(E)^2.
\]
\end{lemma}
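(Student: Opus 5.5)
The idea is to expand $FDF$ bilinearly through $F=B+E$ and to note that everything except the ``$E\cdot E$'' part already has rank controlled by $\rank B$. Precisely,
\[
 FDF=(B+E)D(B+E)=BDF+EDB+EDE,
\]
where I group $BDB+BDE$ into $BDF$. Then $\rank(BDF)\le\rank B$ and $\rank(EDB)\le\rank B$. It therefore remains to approximate $EDE$ by a matrix of rank at most $m$ with row-wise $\ell_1$-error at most $N^{2}\Delta_1(E)^2/\sqrt{m+1}$; dividing by $N$ gives the normalized bound for $\Delta_1$.

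For the term $EDE$ I write it as a sum of rank-one matrices indexed by the diagonal:
\[
 EDE=\sum_{k=0}^{N-1}d_k\,E_{\cdot,k}E_{k,\cdot},
\]
where $E_{\cdot,k}$ is the $k$th column and $E_{k,\cdot}$ is the $k$th row of $E$. Let $S$ be the set of the $m$ indices with the largest $|d_k|$. I set
\[
 B':=BDF+EDB+\sum_{k\in S}d_kE_{\cdot,k}E_{k,\cdot},
\]
so that $\rank B'\le m+2\rank B$. For $k\notin S$ there are at least $m+1$ indices with $|d_j|\ge|d_k|$, and hence $(m+1)|d_k|^2\le\sum_j|d_j|^2\le1$. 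This gives $|d_k|\le(m+1)^{-1/2}$ for every $k\notin S$.

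The error is $FDF-B'=\sum_{k\notin S}d_kE_{\cdot,k}E_{k,\cdot}$. For a fixed row $i$,
\[
 \sum_j\Bigl|\sum_{k\notin S}E_{ik}d_kE_{kj}\Bigr|
 \le\frac{1}{\sqrt{m+1}}\sum_k|E_{ik}|\sum_j|E_{kj}|
 \le\frac{1}{\sqrt{m+1}}\,\bigl(N\Delta_1(E)\bigr)\bigl(N\Delta_1(E)\bigr).
\]
Here I use $\sum_j|E_{kj}|\le N\Delta_1(E)$ for every $k$ and $\sum_k|E_{ik}|\le N\Delta_1(E)$. Dividing by $N$ yields $\Delta_1(FDF-B')\le N\Delta_1(E)^2/\sqrt{m+1}$, which is the claim. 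There is no real obstacle here. The only point needing care is the choice of discarded indices, namely keeping the $m$ largest weights so that the $\ell_2$ normalization of $d$ turns into the uniform bound $(m+1)^{-1/2}$ on the tail.
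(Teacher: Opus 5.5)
Your proof is correct and is essentially the paper's argument. The paper likewise keeps the $m$ largest $|d_k|$, splits $D=D_1+D_0$, uses $\Delta_\infty(D_0)\le (m+1)^{-1/2}$, and bounds the same error term $ED_0E$ row by row. In fact your $B'=BDF+EDB+ED_1E$ is, after expanding $F=B+E$, exactly the same matrix as the paper's $B'=FD_1F+BD_0F+ED_0B$.
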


\begin{proof}
Let $S$ contain the $m$ largest values $|d_k|$ and write $D=D_1+D_0$, where
$D_1$ is supported on $S$.  Since $\sum_k|d_k|^2\le1$,
\[
 \rank D_1\le m,
    \qquad \Delta_\infty(D_0) \le\frac1{\sqrt{m+1}}.
\]
Set
\[
 B':=FD_1F+BD_0F+ED_0B.
\]
Then the estimate of $\rank B'$ is immediate and
\[
    FDF-B'=(FD_1F+FD_0F)-B'=(FD_1F+(B+E)D_0F)-B'=ED_0E.
\]
For every row $a$,
\[
 (ED_0E)_{a,*}
 =\sum_{k=0}^{N-1}E_{a,k}(D_0)_{k,k}E_{k,*},
\]
so
\[
 \begin{aligned}
 \|(ED_0E)_{a,*}\|_{L_1^N}
 &\le \sum_k |E_{a,k}|\frac{1}{\sqrt{m+1}}\|E_{k,*}\|_{L_1^N} \\
 &\le\frac{\Delta_1(E)}{\sqrt{m+1}}
       \sum_k|E_{a,k}|\\
 &\le\frac{N}{\sqrt{m+1}}\Delta_1(E)^2.
 \end{aligned}
\]
Taking the maximum over $a$ proves the lemma.
\end{proof}

Instead of circulants $\mathcal{C}_{G,f}(x,y)=f(x-y)$ it would be slightly
more convenient to work with matrices $A_{f}(x,y)=f(x+y)$ (we omit the $G$ from
the notation). They are equivalent to the
circulants up to columns permutation.

\begin{proof}[Proof of Proposition~\ref{prop:intro-circulant}]
Define the normalized Fourier transform
\[
 \widehat f(\xi)
 :=\frac1N\sum_{z\in G}f(z)e^{-2\pi i\xi\cdot z}.
\]
By Fourier inversion and symmetry of the pairing,
\[
 A_f=\mathcal{F}_GD_f\mathcal{F}_G,
 \qquad
 D_f:=\diag(\widehat f(\xi))_{\xi\in G},
\]
and Parseval's identity gives
\[
 \sum_{\xi\in G}|\widehat f(\xi)|^2\le1.
\]

Apply Theorem~\ref{thm:main-rank} with parameter $\eta/8$ and write
\[
 \mathcal{F}_G=B+E,
 \quad\rank B\le N^{1-a_1},
 \quad\Delta_1(E) \le N^{-1/2+\eta/8},
\]
for some $a_1>0$.
Choose
\[
 m:=\left\lfloor N^{1-\eta/2}\right\rfloor.
\]
Lemma~\ref{lem:circulant-transfer} gives a matrix $B_f$ with
\[
 \rank B_f
 \le N^{1-\eta/2}+2N^{1-a_1}
 \le N^{1-a}
\]
for some $a>0$ and all sufficiently large $N$.  Moreover,
\[
 \Delta_1(A_f-B_f)
 \le\frac{N}{\sqrt{m+1}} N^{-1+\eta/4}
 \le N^{-1/2+\eta/2}.
\]
\end{proof}

\begin{remark}[Sharpness of the power]\label{rem:circulant-sharpness}
Let $G=\Z_N$ with $N$ odd and put
$f(x):=e^{2\pi i x^2/N}$.
Then $|f|=1$, and for $x\ne x'$ one has
\begin{align*}
 \sum_{y\in\Z_N}f(x+y)\overline{f(x'+y)}
 &=e^{2\pi i(x^2-x'^2)/N}
   \sum_{y\in\Z_N}e^{4\pi i(x-x')y/N}=0,
\end{align*}
because multiplication by $2$ is invertible modulo $N$. Thus $N^{-1/2}A_f$ is unitary
and from~\eqref{unitary} we see
that the exponent $1/2$ in Proposition~\ref{prop:intro-circulant} is optimal.
\end{remark}

\subsection{Bilinear approximation}

\begin{proof}[Proof of Corollary~\ref{cor:intro-translation-kernels}]
Apply Proposition~\ref{prop:intro-circulant} with parameter $\eta/2$.
Put $M:=2N+1$, $x_j:=j/M$, $j=0,\ldots,M-1$, and consider the matrix
\[
 A_{ij}:=T(x_i-x_j).
\]
It is a cyclic circulant matrix of order $M$. After normalization by
$\|T\|_\infty$, Proposition~\ref{prop:intro-circulant} gives a matrix $B$ with
\[
 \rank B\le M^{1-a_0},
 \qquad
 \|A-B\|_{1,\infty}
 \lesssim \|T\|_\infty M^{1/2+\eta/2},
\]
where $a_0=a_0(\eta)>0$.  Hence, for every $p\ge1$,
\[
 \Delta_p(A-B)
 =M^{-1/p}\|A-B\|_{p,\infty}
 \le M^{-1/p}\|A-B\|_{1,\infty}
 \lesssim \|T\|_\infty N^{-\alpha_p+\eta/2}.
\]

Use the interpolation operator $P_N$ introduced above in both variables.
Since $T(x-y)$ belongs to $\mathcal T_N$ in each variable,
\[
 (P_N\otimes P_N)A=T(x-y).
\]
Write $B_{ij}=\sum_{\nu=1}^r a_i^{(\nu)}b_j^{(\nu)}$, $r=\rank B$.  Then
\[
 (P_N\otimes P_N)B
 =\sum_{\nu=1}^r (P_Na^{(\nu)})(x)(P_Nb^{(\nu)})(y),
\]
    so the rank does not increase. We use that $\|P_N\otimes P_N\|_{p\to
    p}\lesssim \log^2 N$ (analogously to the univariate case). Finally,
\[
 \begin{aligned}
 \|T(x-y)-(P_N\otimes P_N)B\|_{L_p(\T^2)}
 &\le \|P_N\|_{p\to p}^2\,\Delta_p(A-B)\\
 &\lesssim (\log N)^2\|T\|_\infty N^{-\alpha_p+\eta/2}\\
 &\lesssim \|T\|_\infty N^{-\alpha_p+\eta}.
 \end{aligned}
\]
Since $M\asymp N$, we have $r\le M^{1-a_0}\le N^{1-a_1}$ for some $a_1$.
\end{proof}

\subsection{The weighted Wiener classes.}

\begin{proof}[Proof of Corollary~\ref{cor:intro-trig-wiener}]
The lower bound uses only Theorem~\ref{thm_intro_lower}, so it is valid for any uniformly bounded orthonormal system.
Denote $e_k(x):=\exp(2\pi ikx)$.  For any $N$, we have
$$
    \mathcal{A}_\beta \supset N^{-\beta}\{e_0,\ldots,e_{N-1}\}.
$$
For $N=4n$ we obtain
$$
    d_n^{\mathbb{C}}(\mathcal{A}_\beta,L_p) \gtrsim N^{-\beta} N^{-\alpha_p} \asymp
    n^{-\beta-\alpha_p},
$$
and the choice $N\asymp n^{2/p}$ gives
$$
    d_n^{\mathbb{C}}(\mathcal{A}_\beta,L_p) \gtrsim N^{-\beta} \asymp n^{-2\beta/p} =
    n^{-\beta-2\beta\alpha_p}.
$$

For the upper estimate we apply Proposition~\ref{prop:consecutive-frequencies}
with $\eta=\alpha_p/2$. We obtain an approximating subspace $W$,
    $\dim W\le n/2$, and $N\asymp n^{1/(1-a)}$, $a=a(p)$,
    such that
    $$
    \dist(e_k,W)_{L_p}\lesssim N^{-\alpha_p/2},\quad |k|\le N.
    $$
    We take $V_n:=\Span\{e_k\colon |k| < n/4\}+W$.
    It follows that
    \begin{multline*}
    \dist(\mathcal{A}_\beta,V_n)_{L_p}
        \le \sup_{k\in\mathbb{Z}}\, (1+|k|)^{-\beta} \dist(e_k,V_n)_{L_p} \le \\
        \le \max\left\{\max_{n/4\le |k|\le N}(1+|k|)^{-\beta}\dist(e_k,W)_{L_p},\; N^{-\beta}\right\}
        \lesssim \max\{n^{-\beta}N^{-\alpha_p/2},N^{-\beta}\} \asymp
        n^{-\beta-b},
    \end{multline*}
    where $b=\min\{\alpha_p/2,a\beta\}/(1-a)$.
\end{proof}

\section*{AI assistance and contribution statement}

The research questions, the choice of problems, and the direction of the
investigation were formulated by Yuri Malykhin.  The proof ideas and draft
proofs assembled in the present version were generated in an iterative
dialogue with OpenAI's ChatGPT in response to those questions, with repeated
requests for verification, simplification, and recombination of the
arguments. Malykhin selected the results to be included and is responsible
for the independent mathematical verification and the final form of the
manuscript.


\begin{thebibliography}{99}

\bibitem{Alman21}
J.~Alman,
``Kronecker Products, Low-Depth Circuits, and Matrix Rigidity'',
\emph{Proc. 53rd Annual ACM SIGACT Symp. Th. Comput.} (STOC 2021), 772--785.

\bibitem{AlmanWilliams}
J.~Alman, R.~Williams,
``Probabilistic Rank and Matrix Rigidity'',
\emph{STOC 2017: Proc. 49th Ann. ACM SIGACT Symp. Th. Comput.},
2017, 641--652.

\bibitem{AstMal}
S.V.~Astashkin, Yu.V.~Malykhin,
``Rigidity of sets of independent functions in symmetric spaces'',
arXiv.2607.06530.

\bibitem{BhaskaraMahabadiVakilian2023}
A.~Bhaskara, S.~Mahabadi and A.~Vakilian,
``Tight bounds for volumetric spanners and applications'',
\emph{Advances in Neural Information Processing Systems} \textbf{36} (2023),
916--930.

\bibitem{DvirLiu2020}
Z.~Dvir and A.~Liu,
``Fourier and circulant matrices are not rigid'',
\emph{Theory Comput.} \textbf{16} (2020), 1--48.

\bibitem{Concrete}
R.~L. Graham, D.~E. Knuth, and O.~Patashnik,
\emph{Concrete Mathematics: A Foundation for Computer Science},
2nd ed., Addison--Wesley, 1994.

\bibitem{Hazan}
E.~Hazan, Z.~Karnin,
``Volumetric spanners: An Efficient Exploration Basis for Learning'',
\emph{J. Machine Learning Research} \textbf{17} (2016), 1--34.

\bibitem{KMR}
B.~S. Kashin, Yu.~V. Malykhin, K.~S. Ryutin,
``Kolmogorov Width and Approximate Rank'',
\emph{Proceedings of the Steklov Institute of Mathematics} \textbf{303} (2018),
140--153.

\bibitem{Kivva}
B.~Kivva,
``Improved Upper Bounds for the Rigidity of Kronecker Products'',
\emph{Proc. 46th Int. Symp. Math. Found. Comp. Sci.} (MFCS 2021), Vol. 202,
68:1--68:18.

\bibitem{Malykhin2022}
Y.~Malykhin,
``Matrix and tensor rigidity and $L_p$-approximation'',
\emph{J. Complexity} \textbf{72} (2022), 101651, 13 pp.

\bibitem{Malykhin2024}
Y.~Malykhin,
``Widths and rigidity'',
\emph{Mat. Sb.} \textbf{215} (2024), no.~4, 117--148 (in Russian).

\bibitem{MalRyutin}
Yu.V.~Malykhin, K.S.~Ryutin,
``Widths and rigidity of unconditional sets and random vectors'',
\emph{Izvestiya: Mathematics} \textbf{89}:2 (2025), 261--273.

\bibitem{MalRyutin2}
Yu.V.~Malykhin, K.S.~Ryutin,
``Kolmogorov Widths of Balls in Mixed Norms: The Case of Rigidity'',
\emph{Proc. Steklov Inst.} \textbf{331} (2025), 110--117.

\bibitem{MoellerStasyukUllrich2026}
M.~Moeller, S.~Stasyuk and T.~Ullrich,
``Best $m$-term trigonometric approximation in weighted Wiener spaces and applications'',
\emph{Adv. Oper. Theory} \textbf{11} (2026), article 18.

\bibitem{NguyenNguyenSickel2022}
V.~D.~Nguyen, V.~K.~Nguyen and W.~Sickel,
``$s$-Numbers of embeddings of weighted Wiener algebras'',
\emph{J. Approx. Theory} \textbf{279} (2022), 105745.

\bibitem{RudinGroups}
W.~Rudin,
\emph{Fourier Analysis on Groups},
Interscience Publishers, New York--London, 1962.

\bibitem{RudinRealComplex}
W.~Rudin,
\emph{Real and complex analysis},
3rd ed., McGraw-Hill, 1987.

\bibitem{Sinai}
A.~Sinai,
``Nonrigidity of the trigonometric system in $L_p$ for $1\le p<2$'',
arXiv.2608.12170.

\bibitem{TaoVu2008}
T.~Tao and V.~Vu,
``John-type theorems for generalized arithmetic progressions and iterated
sumsets'',
\emph{Adv. Math.} \textbf{219} (2008), no.~2, 428--449.

\bibitem{Temlyakov2003}
V.~N. Temlyakov,
``Nonlinear Methods of Approximation'',
\emph{Found. Comput. Math.} \textbf{3} (2003), 33--107.

\bibitem{Valiant1977}
L.~G. Valiant,
``Graph-theoretic arguments in low-level complexity'',
\emph{Mathematical Foundations of Computer Science (MFCS 1977)}, 162--176.

\end{thebibliography}
\end{document}